\newcommand{\punto}{\,\cdot\,}
\newcommand{\ds}{\displaystyle}
\newcommand{\smallfrac}[2]{{\textstyle\frac{#1}{#2}}} 
\newcommand{\jump}[1]{[\![#1]\!]}
\newcommand{\ave}[1]{\{\!\!\{#1\}\!\!\}}
\newtheorem{proposition}{Proposition}[section]
\newtheorem{corollary}[proposition]{Corollary}
\newtheorem{lemma}[proposition]{Lemma}
\numberwithin{equation}{section}
\title{Variational views of stokeslets and stresslets}
\date{\today}
\author{Francisco--Javier Sayas\footnote{E-mail: {\tt fjsayas@udel.edu} --  Partially supported by the NSF (DMS 1216356).}\\
Department of Mathematical Sciences \\
University of Delaware, Newark DE, USA\\
\& \\
Virginia Selgas\footnote{E-mail: {\tt selgasvirginia@uniovi.es} -- Partially supported by MICINN (Project MTM2010-21135-C021-01) and the Universidad de Oviedo `Ayudas de Movilidad de Excelencia' Program.}\\
Departamento de Matem\'aticas\\
Universidad de Oviedo, Spain}
\begin{document}

\maketitle

\begin{abstract}
In this paper we present a self-contained variational theory of the layer potentials for the Stokes problem on Lipschitz boundaries. We use these weak definitions to show how to prove the main theorems about the associated Calder\'on projector. Finally, we relate these variational definitions to the integral forms. Instead of working these relations from scratch, we show some  formulas parametrizing the Stokes layer potentials in terms of those for the Lam\'e and Laplace operators. While all the results in this paper are well known for smooth domains, and most might be known for non-smooth domains, the approach is novel a gives a solid structure to the theory of Stokes layer potentials.
\end{abstract}

\section{Motivation and basic notation}

In this paper we present a coherent and self-contained theory of the layer potentials for the Stokes equation on Lipschitz boundaries, in two and three dimensions. The core of the paper is the presentation of the results in variational form, using a formalization based on weighted Sobolev spaces. This presentation, {\em \` a la N\'ed\'elec}, follows the way how the properties of integral operators/equations (in particular coercivity of equations of the first kind) were introduced in the often quoted and not easy to find lecture notes of Jean-Claude N\'ed\'elec \cite{Nedelec:1971}. What is missing there is a rigorous proof that the potentials correspond to the integral expressions that have been known of old (with some work, this recognition can be done for smooth enough domains). A way to solve this problem is to start from the other side, defining the potentials in integral form, and then showing that they satisfy the variational equations that were used for the weak definition of potentials. The definition of potentials, in weak form but using integral expressions, is the modern standard for their introduction, or better said, it is the standard that numerical analysts have accepted as the one that satisfies their theoretical needs. A big part of the boundary integral community has embraced William McLean's monograph \cite{McLean:2000} as the answer to their prayers, the book that proves everything, the source of all needed references. However general it is, the general frame that is developed in \cite{McLean:2000} does not include the Stokes equation. We are actually going to openly borrow results from this text, in a slightly surprising way, at the very end of this paper. The long awaited --and finally appeared in the inmensity of its six hundred page format--  book of George Hsiao and Wolfgang Wendland \cite{HsWe:2008} deals with many more equations, but in its desire to enjoy the full power of the pseudodifferential calculus, it does not include many results (and/or tools) to deal with layer potentials on, say, polyhedra. This fast review would be definitely incomplete if it were not to mention a third way, which has proven to be the one providing the best mapping properties for boundary integral operators. The techniques employed in this way are much more sophisticated than what we are going to see in this paper, picking from advanced analysis of singular integrals. The Chicago-Minnesota school of harmonic analysis (Eugene Fabes, Carlos Kenig, Gregory Verchota \cite{FaKeVe:1988}) started the development of this kind of treatment of the Stokes integral operators.

This is what we are going to do next. We are going to use the variational technique of N\'ed\'elec to introduce the layer potentials  on Lipschitz surfaces for the Stokes problem in two and three dimensions and we are going to use these definitions to prove the main properties of the associated boundary integral operators. Next, we are going to reconstruct the potentials using integral formulas. With combinations of Lam\'e and Laplace potentials, plus the standard properties thereof \cite{McLean:2000}, we will show that integral potentials and variational potentials are one and the same. 

Some (or even all) of the results of this paper are known, but not well known, and definitely not easy to find in the literature. The presentation is original (up to the fact that we mimick the French school variational method) and we make a point in solving some difficulties in the two dimensional case that otherwise lead to a definition of potentials in a quotient space. To the best of our knowledge, we are novel in integrating the two approaches in one unique proof format. In any case, the goal of the paper is twofold: give a clean presentation of techniques and easy-to-refer results; help the reader get acquainted with the mathematics behind the theory of layer potentials.

Apart from well-established background in basic functional analysis, Sobolev space theory and elliptic PDE, we will need some popular results (Korn's inequalities and the theory of abstract mixed problems, a.k.a., the Brezzi conditions) and a few more ingredients:
\begin{itemize}
\item Of the big collections of weighted Sobolev spaces developed by Bernard Hanouzet and Jean Giroire, we will employ one of each, with the aim of controlling behavior at infinity of potentials with local $H^1$ regularity. The results that are needed are taken from \cite{AmGiGi:1994, Hanouzet:1971}.
\item A result concerning the surjectivity of the divergence operator from the previous weighted Sobolev spaces to $L^2$ will be used. This was proved by Vivette Girault and Ad\'elia Sequeira \cite{GiSe:1991}.
\item Finally, the integral theory of layer potentials for the Laplace and Lam\'e operators will be used at some key points at the end of this paper. The results will be taken from William  McLean's monograph \cite{McLean:2000}, which exploits the ideas of a celebrated paper by Martin Costabel \cite{Costabel:1988}. (Note again that the Stokes operator is not a particular case of the very general theory developed in \cite{McLean:2000}.)
\end{itemize}
Precise reference on all the results will be given as we reach them. We next list the basic background material and the general notation that will be used throughout. All elements of this list are standard and the reader can use this collection of bullet points for easy reference.
\begin{description}
\item {\em Functional analysis.} If $H$ is a Hilbert space, $\mathbf H:=H^d$ endowed with the product norm: $d\in \{2,3\}$ will be space dimension. If $H$ is a Hilbert space and $X\subset H$, then $H'$ is the dual space of $H$ and $X^\circ =\{\ell \in H'\,:\,\ell (x)=0 \quad \forall x \in X\}$ is the polar set of $X$.
\item {\em Scalars, vectors, and matrices.}  We will  keep separate notation for scalar (italic, as in $u$), vector (boldface, as in $\mathbf u$) and matrices-tensors (capitals, as in $\mathrm U$) quantities. In case of doubt, {\em vectors are column vectors}.
The colon will be used for the tensor (Frobenius inner) product of matrices $\mathrm U:\mathrm V=\sum_{ij} \mathrm U_{ij}\mathrm V_{ij}$. Everything will be real-valued and there will be no need for conjugation. The tensor product of two non-zero vectors $\mathbf u$ and $\mathbf v$ is the rank one matrix $\mathbf u\otimes\mathbf v$ with entries $u_i\,v_j$ and the following formula should be kept in mind: $(\mathbf u\otimes\mathbf v)\mathbf w=(\mathbf v\cdot\mathbf w)\,\mathbf u$.
\item {\em $L^2$ products.} Parentheses will be used for $L^2$ products:
\[
(u,v)_{\mathcal O}:=\int_{\mathcal O} u\,v,\qquad (\mathbf u,\mathbf v)_{\mathcal O}=\int_{\mathcal O}\mathbf u\cdot\mathbf v, \qquad (\mathrm U,\mathrm V)_{\mathcal O}:=\int_{\mathcal O} \mathrm U :\mathrm V.
\]
The associated norms will be denoted $\|\punto\|_{\mathcal O}$.
\item {\em Multivariable calculus.} Given a vector field $\mathbf u$, we define $\mathrm D\mathbf u$ and $\boldsymbol\varepsilon(\mathbf u)$ by
\[
(\mathrm D \mathbf u)_{ij}:=\frac{\partial u_i}{\partial x_j}, \qquad \boldsymbol\varepsilon(\mathbf u):=\smallfrac12 (\mathrm D \mathbf u+(\mathrm D\mathbf u)^\top).
\]
Note the useful formula $\boldsymbol\varepsilon(\mathbf u):\boldsymbol\varepsilon(\mathbf v)=\boldsymbol\varepsilon(\mathbf u):\mathrm D \mathbf v$. If $\mathrm A$ is matrix valued, then $\mathrm{div}\,\mathrm A$ is the divergence operator applied to the rows of $\mathrm A$ (so that the output is a vector). This is consistent with the formula
\[
\mathrm{div}(\mathrm A^\top \mathbf v)=(\mathrm{div}\,\mathrm A) \cdot\mathbf v+\mathrm A : \mathrm D \mathbf v,
\]
which is in the heart of the integration by parts process.
\item {\em The geometric setting.} In what follows $\Omega_-$ is a connected bounded Lipschitz open set  boundary $\Gamma$. The exterior domain is $\Omega_+:=\mathbb R^d\setminus\overline{\Omega_-}$. The case of boundaries with multiple connected components is very easy to handle, but we will prefer to stay in the connected case.
\item {\em Sobolev theory.} For a bounded or unbounded domain on one side of its Lipschitz boundary, $H^1(\mathcal O)$ is the usual Sobolev space,
$\gamma:H^1(\mathcal O)\to H^{1/2}(\partial O)$ is the trace operator, and $H^1_0(\mathcal O)=\mathrm{Ker}\,\gamma$.
When we identify $L^2(\partial\mathcal O)$ with its dual space, the dual space of $H^{1/2}(\partial\mathcal O)$ is automatically represented by a superset of $L^2(\partial\mathcal O)$. This representation is called $H^{-1/2}(\partial\mathcal O)$. The duality product $H^{-1/2}(\partial\mathcal O)\times H^{1/2}(\partial\mathcal O)$ is denoted $\langle\cdot,\cdot\rangle_{\partial\mathcal O}$. For matrix valued $\mathrm A\in H^{1/2}(\partial\mathcal O)^{d\times d}$ and $\boldsymbol\xi\in \mathbf H^{-1/2}(\partial\mathcal O)$, we will use
$\langle \mathrm A,\boldsymbol\xi\rangle_{\partial\mathcal O}$ to denote  the vector with components $\sum_{j=1}^d \langle \xi_j, \mathrm A_{ij}\rangle_{\partial\mathcal O}.$
The space of $\mathcal C^\infty$ functions with compact support contained in $\mathcal O$ will be denoted $\mathcal D(\mathcal O)$.
\item {\em Local behavior.} The subscript $\mathrm{loc}$ will be used in some parts of this document. For instance, when we write $u \in H^1_{\mathrm{loc}}(\mathcal O)$, we mean that $\varphi u\in H^1(\mathcal O)$ for all $\varphi \in \mathcal D(\mathbb R^d)$ (not only $\varphi\in \mathcal D(\mathcal O)$). This means that we are only localizing at infinity and we are not separating ourselves from the boundary of the set.
\end{description}

\section{The Sobolev spaces of Giroire and Hanouzet}

For an open set $\mathcal O\subset \mathbb R^d$, we consider the
 weighted Sobolev space
\[
W(\mathcal O):=\{ u :\mathcal O\to\mathbb R\,:\, \rho\, u\in L^2(\mathcal O), \nabla u\in \mathbf L^2(\mathcal O)\},
\]
endowed with its natural norm
\[
\|u\|_{1,\rho,\mathcal O}^2:=\| \rho \, u\|_{\mathcal O}^2+\|\nabla u\|_{\mathcal O}^2,
\]
where $\rho=\rho_d$ is given by:
\[
\rho_3(\mathbf x ):= \frac1{\sqrt{1+|\mathbf x|^2}}, \qquad \rho_2(\mathbf x):=\frac1{1+\smallfrac12 \log(1+|\mathbf x|^2)}\,\frac1{\sqrt{1+|\mathbf x|^2}}.
\]
Note that if $\mathcal O$ is bounded $H^1(\mathcal O)=W(\mathcal O)$. It is an easy exercise to prove that if $\mathcal O$ is unbounded and $ P_k(\mathcal O)$ is the space of $d$-variate polynomials of degree no larger than $k$, then
\begin{equation}\label{eq:2.A1}
W(\mathcal O)\cap   P_k(\mathcal O)=\left\{ \begin{array}{ll} \{ 0\}, & \mbox{if $d=3$},\\ \  P_0(\mathcal O), &\mbox{if $d=2$},\end{array}\right.
\end{equation}
that is, $W(\mathcal O)$ does not contain non-trivial polynomials in three dimensions, but it contains constant functions in two dimensions.

\begin{proposition}\label{prop:2.1}
The space $\mathcal D(\mathbb R^d)$ is dense in $W(\mathbb R^d)$. Moreover, the following norms are equivalent to the norm $\|\punto\|_{1,\rho,\mathbb R^d}$ in $W(\mathbb R^d)$:
\begin{itemize}
\item[{\rm (a)}] $\|\nabla\cdot\|_{\mathbb R^3}$ (for $d=3$).
\item[{\rm (b)}] $\big(\|\nabla\cdot\|_{\mathbb R^2}^2+|\jmath(\cdot)|^2\big)^{1/2}$ (for $d=2$).
\end{itemize}
In (b),  $\jmath:W(\mathbb R^2)\to \mathbb R$ is a bounded functional such that $\jmath(1)\neq 0$.
\end{proposition}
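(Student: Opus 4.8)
The plan is to handle density and norm-equivalence together, using the scale of weights to compare $W(\mathbb R^d)$ with a well-understood homogeneous space. First I would establish density of $\mathcal D(\mathbb R^d)$ in two stages: truncation at infinity followed by mollification. Given $u\in W(\mathbb R^d)$, pick a cutoff $\chi_R(\mathbf x)=\chi(\mathbf x/R)$ with $\chi\equiv 1$ near the origin; the product rule gives $\nabla(\chi_R u)=\chi_R\nabla u+u\,\nabla\chi_R$, and the second term is controlled by $\|u\,\nabla\chi_R\|_{\mathbb R^d}\lesssim R^{-1}\|u\|_{L^2(R\le|\mathbf x|\le 2R)}$. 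The point is that on the annulus $R\le|\mathbf x|\le 2R$ the weight $\rho_d$ behaves like $R^{-1}$ in $3$D and like $(R\log R)^{-1}$ in $2$D, so $R^{-1}\|u\|_{L^2(\text{annulus})}\lesssim \|\rho_d u\|_{L^2(\text{annulus})}\to 0$ as $R\to\infty$ (in $2$D one even gains a logarithm). Hence $\chi_R u\to u$ in $W(\mathbb R^d)$. Once $u$ has compact support it lies in $H^1(\mathbb R^d)$ and standard mollification finishes the job. I expect this truncation estimate — verifying that the chosen weights $\rho_d$ are exactly strong enough to kill the cutoff error — to be the technical heart of the density claim, and it is precisely the reason for the logarithmic factor in $\rho_2$.

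For the norm equivalences, one inequality is trivial: $\|\nabla u\|_{\mathbb R^d}\le\|u\|_{1,\rho,\mathbb R^d}$, and in $2$D also $|\jmath(u)|\lesssim\|u\|_{1,\rho,\mathbb R^2}$ since $\jmath$ is assumed bounded. The content is the reverse bound, i.e.\ a weighted Poincar\'e (Hardy-type) inequality controlling $\|\rho_d u\|_{\mathbb R^d}$ by the gradient (plus $|\jmath(u)|$ in $2$D). By the density just proved it suffices to prove this for $u\in\mathcal D(\mathbb R^d)$. In $3$D the relevant statement is the classical Hardy inequality $\int_{\mathbb R^3}|u|^2/|\mathbf x|^2\,\lesssim\int_{\mathbb R^3}|\nabla u|^2$; since $\rho_3(\mathbf x)\le|\mathbf x|^{-1}$ away from the origin and $\rho_3$ is bounded near it, we get $\|\rho_3 u\|_{\mathbb R^3}\lesssim\|\nabla u\|_{\mathbb R^3}$, which gives (a). In $2$D there is no such inequality with constants alone — constants lie in $W(\mathbb R^2)$ by \eqref{eq:2.A1} — so one must quotient out constants: I would prove the weighted inequality $\|\rho_2(u-c_u)\|_{\mathbb R^2}\lesssim\|\nabla u\|_{\mathbb R^2}$ for a suitable averaged constant $c_u$ (the logarithmic weight $\rho_2$ is tuned exactly so that this $2$D Hardy inequality holds — this is the classical Hanouzet/Giroire weighted space result, available from \cite{AmGiGi:1994, Hanouzet:1971}). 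Then $\|\rho_2 u\|\lesssim\|\nabla u\|+\|\rho_2\|_{\mathbb R^2}|c_u|$, and since $\rho_2\in L^2(\mathbb R^2)$ we only need to bound $|c_u|$.

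To close the $2$D argument I would construct $\jmath$ explicitly, e.g.\ $\jmath(u):=(\rho_2^2,u)_{\mathbb R^2}/\|\rho_2\|_{\mathbb R^2}^2$, which is a bounded linear functional on $W(\mathbb R^2)$ with $\jmath(1)=1\neq 0$, and note $c_u=\jmath(c_u)=\jmath(u)-\jmath(u-c_u)$, so $|c_u|\le|\jmath(u)|+|\jmath(u-c_u)|\lesssim|\jmath(u)|+\|\rho_2(u-c_u)\|_{\mathbb R^2}\lesssim|\jmath(u)|+\|\nabla u\|_{\mathbb R^2}$. Combining, $\|\rho_2 u\|_{\mathbb R^2}\lesssim\|\nabla u\|_{\mathbb R^2}+|\jmath(u)|$, which together with the trivial direction yields (b). The main obstacle is the $2$D weighted Poincar\'e inequality $\|\rho_2(u-c_u)\|\lesssim\|\nabla u\|$: without the $\log$ factor it is false, and proving it honestly requires either a dyadic decomposition with the annulus-by-annulus Poincar\'e constants summed against the $\log$-corrected weight, or simply invoking the Hanouzet--Giroire theory, which is the route I would take given that the excerpt already sanctions citing \cite{AmGiGi:1994, Hanouzet:1971}.
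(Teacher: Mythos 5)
Your density argument breaks down for $d=2$, and the key annulus estimate is stated in the wrong direction. On $R\le|\mathbf x|\le 2R$ the weight behaves like $\rho_2\approx (R\log R)^{-1}$, so $R^{-1}\|u\|_{L^2(R\le|\mathbf x|\le 2R)}\approx(\log R)\,\|\rho_2 u\|_{L^2(R\le|\mathbf x|\le 2R)}$: you \emph{lose} a logarithm rather than gain one, and this quantity need not vanish. Concretely, take $u(\mathbf x)=(\log|\mathbf x|)^{1/4}$ for $|\mathbf x|\ge 2$, smoothly extended inside; then $\rho_2 u$ and $\nabla u$ both behave like $|\mathbf x|^{-1}(\log|\mathbf x|)^{-3/4}$, so $u\in W(\mathbb R^2)$, yet $\|u\,\nabla\chi_R\|_{\mathbb R^2}\approx(\log R)^{1/4}\to\infty$, so the dilation cutoffs $\chi(\cdot/R)u$ do \emph{not} converge to $u$ in $W(\mathbb R^2)$. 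This is precisely where the two-dimensional result is delicate: the paper's own proof is a citation (Hanouzet, Th\'eor\`eme I.1, for $d=3$; Amrouche--Girault--Giroire for $d=2$, where one first proves density of $H^1(\mathbb R^2)$ in $W(\mathbb R^2)$ by a ``damping'' argument). Your step can be repaired by a logarithmic cutoff, e.g.\ $\chi_R\equiv1$ for $|\mathbf x|\le R$, $\chi_R\equiv0$ for $|\mathbf x|\ge R^2$, $|\nabla\chi_R|\lesssim(|\mathbf x|\log R)^{-1}$, for which the error is $(\log R)^{-1}\,\||\mathbf x|^{-1}u\|_{L^2(R\le|\mathbf x|\le R^2)}\lesssim\|\rho_2 u\|_{L^2(|\mathbf x|\ge R)}\to 0$; but as written the $d=2$ density claim is unproved. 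Your $d=3$ truncation and the Hardy-inequality proof of (a) are fine.

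Two smaller points on (b). First, your central inequality $\|\rho_2(u-c_u)\|_{\mathbb R^2}\lesssim\|\nabla u\|_{\mathbb R^2}$ is simply quoted from the references, which is consistent with the paper (its proof of (b) is exactly the citation that $\|\nabla\cdot\|_{\mathbb R^2}$ is an equivalent norm on $W(\mathbb R^2)/P_0$); but your reduction ``by density it suffices to treat $u\in\mathcal D(\mathbb R^d)$'' then leans on the defective density step, so either prove density correctly first or quote the inequality on all of $W(\mathbb R^2)$. Second, you prove (b) only for your particular functional $\jmath(u)=(\rho_2^2,u)_{\mathbb R^2}/\|\rho_2\|_{\mathbb R^2}^2$, whereas the statement is meant for an arbitrary bounded $\jmath$ with $\jmath(1)\neq0$ and is later used with a different choice (Proposition \ref{prop:2.8} takes $\jmath(u)=\int_\Gamma\gamma u$). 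The fix is immediate---replace $|\jmath(u-c_u)|\lesssim\|\rho_2(u-c_u)\|_{\mathbb R^2}$ by $|\jmath(u-c_u)|\le\|\jmath\|\,\|u-c_u\|_{1,\rho,\mathbb R^2}\lesssim\|\nabla u\|_{\mathbb R^2}$---but it should be stated.
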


\begin{proof} The density result in the case $d=3$ is easy \cite[Th\'eor\` eme I.1]{Hanouzet:1971}. The case $d=2$ can be found in \cite{AmGiGi:1992}: it requires to first show that $H^1(\mathbb R^2)$ is dense in $W(\mathbb R^2)$ using a clever damping argument. Part (a) is proved in \cite[Th\'eor\`eme I.2]{Hanouzet:1971}. Part (b) can be found in \cite{AmGiGi:1992}, stated in equivalent form in a quotient space: $\|\nabla\cdot\|_{\mathbb R^2}$ is an equivalent norm in $W(\mathbb R^2)/P_0$. Note that the references for the two dimensional results are more recent than the results themselves, which were already mentioned in \cite{LeRoux:1974}.\end{proof}

\begin{corollary}\label{cor:2.2}
The following norms are equivalent to  the product norm in $\mathbf W(\mathbb R^d)$:
\begin{itemize}
\item[{\rm (a)}] $\|\boldsymbol\varepsilon(\punto)\|_{\mathbb R^3}$ (for $d=3$).
\item[{\rm (b)}] $\big(\|\boldsymbol\varepsilon(\punto)\|_{\mathbb R^2}^2+|\boldsymbol\jmath(\cdot)|^2\big)^{1/2}$ (for $d=2$).
\end{itemize}
In {\rm (b)} $\boldsymbol\jmath(\mathbf u):=(\jmath(u_1),\jmath(u_2))$ and $\jmath:W(\mathbb R^2)\to \mathbb R$ is a bounded functional such that $\jmath(1)\neq 0$.
\end{corollary}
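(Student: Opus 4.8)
The plan is to deduce the vector statement from the scalar Proposition~\ref{prop:2.1} together with a Korn-type identity on the whole space. First I would rewrite the product norm of $\mathbf u=(u_1,\dots,u_d)\in\mathbf W(\mathbb R^d)$ as $\big(\sum_{i=1}^d\|u_i\|_{1,\rho,\mathbb R^d}^2\big)^{1/2}$ and apply Proposition~\ref{prop:2.1} componentwise. For $d=3$ each $\|u_i\|_{1,\rho,\mathbb R^3}$ is equivalent to $\|\nabla u_i\|_{\mathbb R^3}$, so the product norm is equivalent to $\big(\sum_i\|\nabla u_i\|_{\mathbb R^3}^2\big)^{1/2}=\|\mathrm D(\punto)\|_{\mathbb R^3}$. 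For $d=2$ each $\|u_i\|_{1,\rho,\mathbb R^2}$ is equivalent to $\big(\|\nabla u_i\|_{\mathbb R^2}^2+|\jmath(u_i)|^2\big)^{1/2}$, so, using $\sum_i|\jmath(u_i)|^2=|\boldsymbol\jmath(\mathbf u)|^2$, the product norm is equivalent to $\big(\|\mathrm D(\punto)\|_{\mathbb R^2}^2+|\boldsymbol\jmath(\punto)|^2\big)^{1/2}$. Thus the corollary reduces to showing that $\|\mathrm D(\punto)\|_{\mathbb R^d}$ and $\|\boldsymbol\varepsilon(\punto)\|_{\mathbb R^d}$ are equivalent seminorms on $\mathbf W(\mathbb R^d)$, and, for $d=2$, that adjoining $|\boldsymbol\jmath(\punto)|^2$ to both sides preserves that equivalence.

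For the seminorm equivalence, the bound $\|\boldsymbol\varepsilon(\mathbf u)\|_{\mathbb R^d}\le\|\mathrm D\mathbf u\|_{\mathbb R^d}$ is immediate, since $\boldsymbol\varepsilon(\mathbf u)$ is the symmetric part of $\mathrm D\mathbf u$. For the reverse bound I would first establish, for $\mathbf u\in\mathcal D(\mathbb R^d)^d$, the identity
\[
\|\boldsymbol\varepsilon(\mathbf u)\|_{\mathbb R^d}^2=\smallfrac12\|\mathrm D\mathbf u\|_{\mathbb R^d}^2+\smallfrac12\|\mathrm{div}\,\mathbf u\|_{\mathbb R^d}^2,
\]
by expanding $4\,|\boldsymbol\varepsilon(\mathbf u)|^2=\sum_{i,j}(\partial_j u_i+\partial_i u_j)^2$ and integrating the mixed term $\sum_{i,j}\int_{\mathbb R^d}\partial_j u_i\,\partial_i u_j$ by parts twice, which turns it into $\int_{\mathbb R^d}(\mathrm{div}\,\mathbf u)^2$. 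This yields $\|\mathrm D\mathbf u\|_{\mathbb R^d}\le\sqrt2\,\|\boldsymbol\varepsilon(\mathbf u)\|_{\mathbb R^d}$ on $\mathcal D(\mathbb R^d)^d$. Then, invoking the density of $\mathcal D(\mathbb R^d)$ in $W(\mathbb R^d)$ from Proposition~\ref{prop:2.1} (hence of $\mathcal D(\mathbb R^d)^d$ in $\mathbf W(\mathbb R^d)$) and observing that $\mathrm D$, $\boldsymbol\varepsilon$ and $\mathrm{div}$ are each continuous from $\mathbf W(\mathbb R^d)$ into the appropriate $L^2$ space — all three involve only first derivatives, which are controlled by the $\mathbf W(\mathbb R^d)$ norm — the identity and the resulting two-sided bound extend by continuity to all of $\mathbf W(\mathbb R^d)$. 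For $d=2$ one then finishes with $\|\mathrm D\mathbf u\|_{\mathbb R^2}^2+|\boldsymbol\jmath(\mathbf u)|^2\le 2\|\boldsymbol\varepsilon(\mathbf u)\|_{\mathbb R^2}^2+|\boldsymbol\jmath(\mathbf u)|^2\le 2\big(\|\boldsymbol\varepsilon(\mathbf u)\|_{\mathbb R^2}^2+|\boldsymbol\jmath(\mathbf u)|^2\big)$ and the trivial reverse inequality.

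The one point that deserves genuine care — and the closest thing here to an obstacle — is the role of the correction $\boldsymbol\jmath$ in two dimensions. By \eqref{eq:2.A1} the constant vector fields lie in $\mathbf W(\mathbb R^2)$, and they are annihilated by $\boldsymbol\varepsilon$, so $\|\boldsymbol\varepsilon(\punto)\|_{\mathbb R^2}$ alone cannot be a norm on $\mathbf W(\mathbb R^2)$; the identity above shows that the seminorm $\|\boldsymbol\varepsilon(\punto)\|_{\mathbb R^2}$ vanishes on exactly the same subspace as $\|\mathrm D(\punto)\|_{\mathbb R^2}$ (the non-constant rigid motions grow linearly at infinity and so are excluded from $\mathbf W(\mathbb R^2)$ by the $\rho_2$ weight), so the single bounded functional $\boldsymbol\jmath$ provided by Proposition~\ref{prop:2.1}(b), with $\jmath(1)\ne0$, is precisely what is needed to turn it into an equivalent norm. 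In three dimensions $W(\mathbb R^3)$ contains no nonzero polynomials, hence no rigid motions, and no correction is needed, which is part (a).
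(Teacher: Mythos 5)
Your proposal is correct and takes essentially the same route as the paper: the whole-space identity $2\|\boldsymbol\varepsilon(\mathbf u)\|_{\mathbb R^d}^2=\|\mathrm D\mathbf u\|_{\mathbb R^d}^2+\|\mathrm{div}\,\mathbf u\|_{\mathbb R^d}^2$ for $\mathbf u\in\boldsymbol{\mathcal D}(\mathbb R^d)$ obtained by integrating the mixed term by parts, extension to $\mathbf W(\mathbb R^d)$ by the density in Proposition \ref{prop:2.1}, and then the (componentwise) norm equivalences of Proposition \ref{prop:2.1}, adjoining $|\boldsymbol\jmath(\cdot)|^2$ when $d=2$. Your closing remarks on why the correction $\boldsymbol\jmath$ is unavoidable in two dimensions are sound but not needed for the proof itself.
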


\begin{proof} For all $\mathbf u \in\boldsymbol{\mathcal D} (\mathbb R^d)$ we can write
\[
2(\boldsymbol\varepsilon(\mathbf u),\,\boldsymbol\varepsilon(\mathbf u)\,)_{\mathbb R^d}= 2(\boldsymbol\varepsilon(\mathbf u),\,\mathrm D\mathbf u\,)_{\mathbb R^d}=(\mathrm D \mathbf u,\,\mathrm D\mathbf u)_{\mathbb R^d}+(\mathrm{div}\,\mathbf u,\mathrm{div}\,\mathbf u)_{\mathbb R^d}.
\]
Therefore
\[
\|\mathrm D \mathbf u\|^2_{\mathbb R^d} \le 2\|\boldsymbol\varepsilon(\mathbf u)\|_{\mathbb R^d}^2=\smallfrac12 \|\mathrm D \mathbf u+(\mathrm D\mathbf u)^\top\|_{\mathbb R^d}^2\le  2 \|\mathrm D\mathbf u\|_{\mathbb R^d}^2\qquad \forall \mathbf u \in\boldsymbol{\mathcal D} (\mathbb R^d).
\]
By density (Proposition \ref{prop:2.1}) it follows that
\begin{equation}\label{eq:2.1}
\smallfrac1{\sqrt2}\|\mathrm D \mathbf u\|_{\mathbb R^d}\le \|\boldsymbol\varepsilon(\mathbf u)\|_{\mathbb R^d}\le \|\mathrm D \mathbf u\|_{\mathbb R^d}\qquad \forall\mathbf u\in \mathbf W(\mathbb R^d).
\end{equation}
The result is then a direct consequence of \eqref{eq:2.1} and Proposition \ref{prop:2.1}.
\end{proof}

Because the spaces $W(\mathcal O)$ are locally $H^1(\mathcal O)$, we can define the interior and exterior traces
\[
\gamma^+:W(\Omega_+) \to H^{1/2}(\Gamma), \qquad \gamma^-:H^1(\Omega_-)\to H^{1/2}(\Gamma).
\]
These trace operators are surjective. 

\begin{proposition}\label{prop:2.3}
The space $\mathcal D(\Omega_+)$ is dense in $W_0(\Omega_+):=\{ u\in W(\Omega_+)\,:\, \gamma_+ u =0\}$.
\end{proposition}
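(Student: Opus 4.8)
The plan is to prove density by a standard three-step truncate–extend–mollify argument, adapted to the weighted setting. Let $u\in W_0(\Omega_+)$, so $\rho u\in L^2(\Omega_+)$, $\nabla u\in\mathbf L^2(\Omega_+)$ and $\gamma^+u=0$. First I would extend $u$ by zero to all of $\mathbb R^d$; call the extension $\widetilde u$. Because $\gamma^+u=0$, the zero extension across $\Gamma$ is legitimate: $\widetilde u\in W(\mathbb R^d)$ with $\nabla\widetilde u$ equal to the zero extension of $\nabla u$ (this is the elementary fact that $H^1_0$-type functions extend by zero, applied locally near $\Gamma$ since $W$ is locally $H^1$; globally one checks that the distributional gradient picks up no surface measure on $\Gamma$ precisely because the trace vanishes). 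Now by Proposition \ref{prop:2.1}, $\mathcal D(\mathbb R^d)$ is dense in $W(\mathbb R^d)$, so there is a sequence $\varphi_n\in\mathcal D(\mathbb R^d)$ with $\varphi_n\to\widetilde u$ in $W(\mathbb R^d)$.

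The functions $\varphi_n$ are not yet supported in $\Omega_+$, so the second step is to correct them near $\Gamma$. Since $\overline{\Omega_-}$ is compact, fix a cutoff $\chi\in\mathcal D(\mathbb R^d)$ with $\chi\equiv 1$ on a neighborhood of $\overline{\Omega_-}$; then $(1-\chi)\varphi_n\in\mathcal D(\Omega_+)$, and it suffices to show $(1-\chi)\varphi_n\to u$ in $W(\Omega_+)$. Equivalently, writing $\widetilde u=(1-\chi)\widetilde u+\chi\widetilde u$ and noting $(1-\chi)\widetilde u$ agrees with $(1-\chi)u$ on $\Omega_+$, one reduces to two claims: (i) $(1-\chi)\varphi_n\to(1-\chi)\widetilde u$ in $W(\mathbb R^d)$, which is immediate because multiplication by the fixed $\mathcal C^\infty$ function $1-\chi$ (with bounded derivatives, since it is $\mathcal D$ outside a compact set — here one uses that $1-\chi$ is constant $=1$ at infinity) is continuous on $W(\mathbb R^d)$; and (ii) $\chi\widetilde u\in W_0(\Omega_+)$ can be approximated by $\mathcal D(\Omega_+)$ functions. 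Claim (ii) is now a \emph{compactly supported, local} problem: $\chi\widetilde u$ is an $H^1(\mathbb R^d)$ function vanishing on $\Omega_-$ and with compact support, so $\chi u=\chi\widetilde u|_{\Omega_+}\in H^1(\Omega_+)$ has zero trace and compact support, and the classical density of $\mathcal D(\Omega_+)$ in $\{v\in H^1(\Omega_+):\gamma^+v=0, \ \mathrm{supp}\,v \text{ compact}\}$ — i.e.\ $H^1_0$ density for Lipschitz domains — finishes it. The weight $\rho$ plays no role here since everything is compactly supported and $\rho$ is bounded above and below on compact sets.

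The main obstacle, and the only point requiring genuine care, is the first step: verifying that the zero extension $\widetilde u$ really lies in $W(\mathbb R^d)$ with no spurious distributional contribution on $\Gamma$. This is where the hypothesis $\gamma^+u=0$ is used in an essential way, and one must invoke the characterization of vanishing trace for functions that are only $H^1$ locally near the Lipschitz boundary $\Gamma$. Once that is in hand, everything else is the routine cutoff-and-mollify machinery, with the mild twist that the cutoff $1-\chi$ must be handled as a multiplier on the weighted space $W(\mathbb R^d)$ — harmless because $1-\chi$ and all its derivatives are bounded and $1-\chi$ equals a constant outside a compact set, so it maps $W(\mathbb R^d)$ continuously into itself. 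I would also remark that the argument is dimension-independent: the distinction between $d=2$ and $d=3$ in Proposition \ref{prop:2.1} is already absorbed into the density statement we are quoting, so no separate treatment of the two cases is needed.
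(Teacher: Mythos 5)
Your proposal is correct and follows essentially the same route as the paper, which disposes of the result in one line by combining Proposition \ref{prop:2.1} with the classical density of $\mathcal D(\mathcal O)$ in the kernel of the trace operator on a Lipschitz domain; your write-up merely supplies the gluing details (zero extension, cutoff near $\overline{\Omega_-}$, and the observation that the weight is irrelevant on compact sets). One cosmetic remark: the sentence ``it suffices to show $(1-\chi)\varphi_n\to u$'' is not literally what you prove --- your actual (and correct) argument approximates $(1-\chi)u$ and $\chi u$ separately and uses that the closure of $\mathcal D(\Omega_+)$ is a subspace.
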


\begin{proof} This result is a straightforward consequence of Proposition \ref{prop:2.1} and the fact that $\mathcal D(\mathcal O)$ is dense in the kernel of the trace operator for any Lipschitz domain $\mathcal O$. \end{proof}

\begin{proposition}\label{cor:2.5}
The divergence operator $\mathrm{div}:\mathbf W(\mathbb R^d)\to L^2(\mathbb R^d)$ is surjective.
\end{proposition}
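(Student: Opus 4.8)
The plan is to construct a bounded right inverse of the divergence on a dense subspace of $L^2(\mathbb R^d)$ and then extend it by continuity. The first observation is that $\mathrm{div}:\mathbf W(\mathbb R^d)\to L^2(\mathbb R^d)$ is bounded, since $\|\mathrm{div}\,\mathbf u\|_{\mathbb R^d}\le\sqrt d\,\|\mathrm D\mathbf u\|_{\mathbb R^d}\le\sqrt d\,\|\mathbf u\|_{1,\rho,\mathbb R^d}$, and that $\mathbf W(\mathbb R^d)$ is complete. Hence, if we exhibit a linear map $T_0:\mathcal D(\mathbb R^d)\to\mathbf W(\mathbb R^d)$ that is bounded for the $L^2(\mathbb R^d)$ norm on its domain and satisfies $\mathrm{div}\,T_0 f=f$, then $T_0$ extends to a bounded $T:L^2(\mathbb R^d)\to\mathbf W(\mathbb R^d)$ with $\mathrm{div}\,Tf=f$ for all $f\in L^2(\mathbb R^d)$, which is surjectivity.

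For $f\in\mathcal D(\mathbb R^d)$ I would take $p:=E_d*f$, with $E_d$ the fundamental solution of the Laplacian, and set $\mathbf u:=\nabla p$; then $p\in\mathcal C^\infty(\mathbb R^d)$ and $\mathrm{div}\,\mathbf u=\Delta p=f$ by construction. The verification that $\mathbf u\in\mathbf W(\mathbb R^d)$ splits in two. On the one hand $\mathrm D\mathbf u$ is the Hessian of $p$, and since the matrix of Fourier multipliers carrying $\widehat{\Delta p}$ to the entries $\widehat{\partial_i\partial_j p}$ has pointwise Frobenius norm $1$, Plancherel gives the exact identity $\|\mathrm D\mathbf u\|_{\mathbb R^d}=\|\Delta p\|_{\mathbb R^d}=\|f\|_{\mathbb R^d}$, so $\nabla\mathbf u\in\mathbf L^2(\mathbb R^d)$. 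On the other hand, because $f$ has compact support, $\mathbf u=\nabla p$ inherits the decay of $\nabla E_d$, namely $|\mathbf u(\mathbf x)|=O(|\mathbf x|^{1-d})$ at infinity, and an elementary computation with the explicit weights $\rho_d$ shows $\rho_d\,\mathbf u\in\mathbf L^2(\mathbb R^d)$ for both $d=2$ and $d=3$ (this is where the logarithmic correction in $\rho_2$ is used). Thus $\mathbf u\in\mathbf W(\mathbb R^d)$ and, in particular, each component $u_i$ belongs to $W(\mathbb R^d)$. Note that $p$ itself need not lie in $W(\mathbb R^2)$ — it may grow logarithmically when $\int_{\mathbb R^2}f\neq0$ — but this is irrelevant, since only $\nabla p$ enters.

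It remains to make $f\mapsto\mathbf u$ (possibly after a correction) bounded for the $L^2$ norm. When $d=3$ this is immediate from Proposition \ref{prop:2.1}(a) applied componentwise: $\|\rho_3 u_i\|_{\mathbb R^3}\lesssim\|\nabla u_i\|_{\mathbb R^3}\le\|\mathrm D\mathbf u\|_{\mathbb R^3}=\|f\|_{\mathbb R^3}$, so $\|\mathbf u\|_{1,\rho,\mathbb R^3}\lesssim\|f\|_{\mathbb R^3}$ and we take $T_0 f:=\mathbf u$. The case $d=2$ is the main obstacle, because $W(\mathbb R^2)$ contains the constants and Proposition \ref{prop:2.1}(b) only controls $\|\rho_2 u_i\|_{\mathbb R^2}$ by $\|\nabla u_i\|_{\mathbb R^2}+|\jmath(u_i)|$, with no a priori bound on $|\jmath(u_i)|$ in terms of $\|f\|_{\mathbb R^2}$. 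I would resolve this by exploiting the gauge freedom of the equation: set $T_0 f:=\tilde{\mathbf u}:=\mathbf u-\mathbf c$, where $\mathbf c$ is the constant vector with $c_i:=\jmath(u_i)/\jmath(1)$ — legitimate since $\jmath(1)\neq0$, constants belong to $\mathbf W(\mathbb R^2)$ by \eqref{eq:2.A1}, and $\jmath$ is linear. Then $\mathrm{div}\,\tilde{\mathbf u}=\mathrm{div}\,\mathbf u=f$, $\mathrm D\tilde{\mathbf u}=\mathrm D\mathbf u$, and $\jmath(\tilde u_i)=0$, so Proposition \ref{prop:2.1}(b) yields $\|\tilde{\mathbf u}\|_{1,\rho,\mathbb R^2}\lesssim\|\mathrm D\mathbf u\|_{\mathbb R^2}=\|f\|_{\mathbb R^2}$; moreover $f\mapsto\tilde{\mathbf u}$ is linear since both $f\mapsto\mathbf u$ and $f\mapsto\mathbf c$ are. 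In both dimensions this produces the required $T_0$, and the extension argument of the first paragraph concludes the proof. (Alternatively, the statement can be quoted directly from the weighted-Sobolev analysis of Girault and Sequeira \cite{GiSe:1991}; the argument above is the self-contained version.) The only genuine difficulty is this two-dimensional normalization; all the rest is Plancherel plus a routine weighted decay estimate.
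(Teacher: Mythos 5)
Your proof is correct, but it takes a genuinely different route from the paper. The paper's proof is a localization/gluing argument: it solves $\mathrm{div}\,\mathbf u_-=p|_{\Omega_-}$ in the bounded domain with $\mathbf u_-\in\mathbf H^1(\Omega_-)$, extends $\mathbf u_-$ to $\mathbb R^d$, corrects the divergence in the exterior using the Girault--Sequeira theorem that $\mathrm{div}:\mathbf W_0(\Omega_+)\to L^2(\Omega_+)$ is onto \cite{GiSe:1991}, and glues the two pieces (the zero exterior trace making the patched field admissible). You instead build an explicit global right inverse: $\mathbf u=\nabla(E_d* f)$ for $f\in\mathcal D(\mathbb R^d)$, with the exact Plancherel identity $\|\mathrm D\mathbf u\|_{\mathbb R^d}=\|f\|_{\mathbb R^d}$ for the Hessian, a decay estimate $|\mathbf u|=O(|\mathbf x|^{1-d})$ to place $\mathbf u$ in $\mathbf W(\mathbb R^d)$, the norm equivalences of Proposition \ref{prop:2.1} to get an $L^2$-bound, and--the key two-dimensional step--a gauge correction by the constant $\jmath(u_i)/\jmath(1)$ (legitimate by \eqref{eq:2.A1}) so that Proposition \ref{prop:2.1}(b) applies; density of $\mathcal D(\mathbb R^d)$ in $L^2(\mathbb R^d)$ and continuity of $\mathrm{div}$ then finish the argument. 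Both are sound; your version is self-contained (it does not invoke the nontrivial exterior-domain result of \cite{GiSe:1991}) and yields a bounded right inverse $L^2(\mathbb R^d)\to\mathbf W(\mathbb R^d)$ with an explicit constant, which is slightly more than surjectivity, whereas the paper's argument is shorter modulo the cited reference and fits its general policy of borrowing established results. One cosmetic remark: the logarithmic factor in $\rho_2$ is not actually needed for your weighted decay estimate--$|\mathbf u|=O(|\mathbf x|^{-1})$ against the weight $(1+|\mathbf x|^2)^{-1/2}$ is already square-integrable in $\mathbb R^2$--so that parenthetical claim is harmless but overstated; the genuinely two-dimensional difficulty is, as you correctly identify, the normalization of the constants through $\jmath$.
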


\begin{proof}
The divergence operator $\mathrm{div}:\mathbf W_0(\Omega_+) \to L^2(\Omega_+)$ is surjective. This is \cite[Theorem 3.3]{GiSe:1991}.
Given $p\in L^2(\mathbb R^d)$ we take $p_-:=p|_{\Omega_-}$ and choose $\mathbf u_-\in \mathbf H^1(\Omega_-)$ such that $\mathrm{div}\,\mathbf u_-=p_-$ in $\Omega_-$. Let $\widetilde{\mathbf u_-}\in \mathbf H^1(\mathbb R^d)$ be such that $\widetilde{\mathbf u_-}|_{\Omega_-}=\mathbf u_-$. We then  build 
$\mathbf v_+\in \mathbf W_0(\Omega_+) $ such that $\mathrm{div}\,\mathbf v_+=p|_{\Omega_+}-\mathrm{div} \widetilde{\mathbf u_-}|_{\Omega_+}$. Finally, we consider
\[
\mathbf u:=\left\{\begin{array}{ll} \mathbf u_-, & \mbox{in $\Omega_-$},\\ \widetilde{\mathbf u_-}|_{\Omega_+}+\mathbf v_+, & \mbox{in $\Omega_+$}.\end{array}\right.
\]
Then $\mathbf u\in \mathbf W(\mathbb R^d)$ and $\mathrm{div}\,\mathbf u=p$.
\end{proof}

Consider the spaces $\mathcal M:=\mathcal M_d$:
\begin{eqnarray*}
\mathcal M_3 &:=&\{\mathbf a+\mathbf b\times\mathbf x\,:\, \mathbf a,\mathbf b\in \mathbb R^3\},\\
\mathcal M_2&:=& \{\mathbf a+b\,(x_2,-x_1)\,:\, \mathbf a\in \mathbb R^2, b\in \mathbb R\}.
\end{eqnarray*}
Note that if $\mathbf v \in \mathcal M$, then $\boldsymbol\varepsilon(\mathbf v)=0$ and $\mathrm{div}\,\mathbf v=0$.
Also $\mathrm{dim}\,\mathcal M_3=6$, $\mathrm{dim}\,\mathcal M_2=3$, and $\mathcal M\cap \mathbf H^1_0(\Omega_-)=\{\mathbf 0\}.$
Therefore, the space  $\mathcal M_\Gamma:=\gamma\mathcal M$ has the same dimension as $\mathcal M$ and elements of $\mathcal M$ are determined by their values on $\Gamma$, which will allow us to just identify $\mathcal M_\Gamma\equiv\mathcal M$. 
Finally, by \eqref{eq:2.A1}
\begin{equation}\label{eq:2.30}
\mathcal M \cap \mathbf W(\Omega_+)=\left\{ \begin{array}{ll} \{ \mathbf 0\}, & \mbox{if $d=3$},\\ \boldsymbol P_0(\Omega_+), &\mbox{if $d=2$}.\end{array}\right.
\end{equation}

\begin{lemma}[Rigid motions and Korn's inequality]\label{lemma:rigid}
Let $\mathcal O$ be a bounded domain.
\begin{itemize}
\item[{\rm (a)}] If $\mathbf v\in \mathbf H^1(\mathcal O)$ satisfies $\boldsymbol\varepsilon(\mathbf v)=0$ and $\mathcal O$ is connected, then $\mathbf v\in \mathcal M$.
\item[{\rm (b)}] There exist constants $C,c>0$ such that
\[
\|\boldsymbol\varepsilon(\mathbf u)\|_{\mathcal O}^2 + c\|\mathbf u\|_{\mathcal O}^2\ge C \|\mathrm D\mathbf u\|_{\mathcal O}^2 \qquad \forall \mathbf u\in \mathbf H^1(\mathcal O).
\]
\item[{\rm (c)}] If $m:=\mathrm{dim}\,\mathcal M_d$ and $\boldsymbol\jmath:\mathbf H^1(\mathcal O)\to \mathbb R^m$ is linear, bounded, and such that $\boldsymbol\jmath(\mathbf c)\neq \mathbf 0$ for all $\mathbf 0\neq \mathbf c\in \mathcal M_d$, then
\[
\Big(\|\boldsymbol\varepsilon(\mathbf u)\|_{\mathcal O}^2 +|\boldsymbol\jmath(\mathbf u)|^2\Big)^{1/2}
\]
is an equivalent norm in $\mathbf H^1(\mathcal O)$.
\end{itemize}
\end{lemma}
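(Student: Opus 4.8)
We sketch the argument we would use, organising the three items in order of increasing depth since (c) is going to be deduced from (a) and (b). For part (a), the plan is to exploit the classical ``second gradient'' identity
\[
\partial_k\partial_j v_i=\partial_k\varepsilon_{ij}(\mathbf v)+\partial_j\varepsilon_{ik}(\mathbf v)-\partial_i\varepsilon_{jk}(\mathbf v),
\]
which holds in the sense of distributions for every $\mathbf v\in\mathbf H^1(\mathcal O)$ (it is a mere rearrangement of the definition of $\boldsymbol\varepsilon$, using that mixed distributional derivatives commute). When $\boldsymbol\varepsilon(\mathbf v)=0$ the right-hand side vanishes, so every component $v_i$ has vanishing second distributional derivatives; since $\mathcal O$ is connected, each $\partial_j v_i\in L^2(\mathcal O)$ has vanishing distributional gradient and is therefore constant, i.e. $\mathrm D\mathbf v$ is a constant matrix $\mathrm B$ and $\mathbf v(\mathbf x)=\mathbf a+\mathrm B\mathbf x$ for some $\mathbf a$. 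The hypothesis $\boldsymbol\varepsilon(\mathbf v)=0$ says exactly that $\mathrm B$ is skew-symmetric, and the skew-symmetric matrices are precisely the linear parts of the elements of $\mathcal M$ (the map $\mathbf x\mapsto\mathbf b\times\mathbf x$ when $d=3$, the map $\mathbf x\mapsto b\,(x_2,-x_1)$ when $d=2$), so $\mathbf v\in\mathcal M$.

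For part (b) I would simply invoke Korn's second inequality, one of the ``popular results'' announced in the introduction; a proof is available in any standard reference on linear elasticity, and it ultimately rests on the identity displayed above together with the Ne\v{c}as/Lions lemma that bounds the $L^2$ norm of a distribution by the $H^{-1}$ norms of it and its gradient. The point worth recording is that (b) requires only boundedness of $\mathcal O$; in particular it does not involve $\boldsymbol\jmath$, and this is what keeps the proof of (c) from being circular.

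For part (c), one inequality is immediate: $\|\boldsymbol\varepsilon(\mathbf u)\|_{\mathcal O}\le\|\mathrm D\mathbf u\|_{\mathcal O}$ and the boundedness of $\boldsymbol\jmath$ give $\|\boldsymbol\varepsilon(\mathbf u)\|_{\mathcal O}^2+|\boldsymbol\jmath(\mathbf u)|^2\le C\|\mathbf u\|_{\mathbf H^1(\mathcal O)}^2$. For the reverse, I would argue by contradiction: if it fails there is a sequence $\mathbf u_n$ with $\|\mathbf u_n\|_{\mathbf H^1(\mathcal O)}=1$ and $\|\boldsymbol\varepsilon(\mathbf u_n)\|_{\mathcal O}^2+|\boldsymbol\jmath(\mathbf u_n)|^2\to 0$. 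By the Rellich--Kondrachov theorem a subsequence converges in $\mathbf L^2(\mathcal O)$; applying (b) to the differences of this subsequence shows it is Cauchy in $\mathbf H^1(\mathcal O)$, hence it converges there to some $\mathbf u$ with $\|\mathbf u\|_{\mathbf H^1(\mathcal O)}=1$. Passing to the limit (continuity of $\boldsymbol\varepsilon$ and of $\boldsymbol\jmath$) gives $\boldsymbol\varepsilon(\mathbf u)=0$ and $\boldsymbol\jmath(\mathbf u)=\mathbf 0$; by (a) this forces $\mathbf u\in\mathcal M$, and then the assumption $\boldsymbol\jmath(\mathbf c)\neq\mathbf 0$ for $\mathbf 0\neq\mathbf c\in\mathcal M$ forces $\mathbf u=\mathbf 0$, contradicting $\|\mathbf u\|_{\mathbf H^1(\mathcal O)}=1$.

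The only genuinely hard input is (b); Korn's second inequality (really the Ne\v{c}as inequality behind it) is a nontrivial theorem, but since it is quoted as background the actual work left in this lemma is to make the distributional step in (a) precise — in particular the use of connectedness to pass from vanishing second derivatives to affine functions — and to verify that the compactness loop in (c) closes, which it does precisely because (b) is independent of $\boldsymbol\jmath$.
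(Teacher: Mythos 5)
Your proposal is correct, and parts (b) and (c) follow the paper's own route exactly: Korn's second inequality is simply quoted as background (the paper cites \cite[Theorem 10.2]{McLean:2000}), and (c) is obtained from (b) by precisely the compactness argument you spell out -- the paper only says ``Part (c) follows from (b) and a compactness argument,'' so your contradiction/Rellich loop is the intended content, made explicit. The one genuine difference is in part (a): the paper does not prove it either, but cites \cite[Lemma 10.5]{McLean:2000} for $d=3$ and then handles $d=2$ by embedding a planar field into a three-dimensional one, whereas you give a direct, dimension-independent argument from the distributional identity $\partial_k\partial_j v_i=\partial_k\varepsilon_{ij}(\mathbf v)+\partial_j\varepsilon_{ik}(\mathbf v)-\partial_i\varepsilon_{jk}(\mathbf v)$, concluding that $\mathrm D\mathbf v$ is a constant skew-symmetric matrix and hence $\mathbf v\in\mathcal M$. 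Your route is self-contained and treats both dimensions at once, at the (modest) cost of redoing what the cited reference already provides; the paper's route is shorter on the page but leans on the external lemma and the $2$D-to-$3$D immersion trick. One small point worth keeping in mind: your use of (a) inside the contradiction argument for (c) tacitly uses connectedness of $\mathcal O$ (so that the kernel of $\boldsymbol\varepsilon$ is exactly $\mathcal M_d$ and the hypothesis on $\boldsymbol\jmath$ kills the limit); this is consistent with reading ``domain'' as connected, which is also how the lemma is used later in the paper, but it deserves a word since (a) states connectedness as an extra hypothesis.
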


\begin{proof}
The proof of Part (a) for $d=3$ is given as \cite[Lemma 10.5]{McLean:2000}. The two dimensional case is a simple consequence of the three dimensional case, by immersing the two dimensional vector field in a space of three dimensional vector fields. Part (b) is the well known Korn's second inequality \cite[Theorem 10.2]{McLean:2000}.
Part (c) follows from (b) and a compactness argument.
\end{proof}

\begin{lemma}\label{prop:8.4b}
If $\mathbf u\in \mathbf W(\Omega_+)$ satisfies $\boldsymbol\varepsilon(\mathbf u)=0$, then $\mathbf u\in \mathcal M \cap \mathbf W(\Omega_+)$.
\end{lemma}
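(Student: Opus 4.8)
The plan is to reduce to the bounded-domain statement of Lemma~\ref{lemma:rigid}(a) by localization, and then to glue the resulting local rigid motions into a single global one, using that an element of $\mathcal M$ is determined by its restriction to any nonempty open set. To begin, I would recall that $\mathbf W(\Omega_+)\subset \mathbf H^1_{\mathrm{loc}}(\Omega_+)$: on any bounded subset the weight $\rho$ is bounded above and below by positive constants, so the weighted and the plain $L^2$ norms are comparable there. In particular the hypothesis $\boldsymbol\varepsilon(\mathbf u)=0$ makes classical sense, and what really has to be shown is $\mathbf u\in\mathcal M$, the membership in $\mathbf W(\Omega_+)$ being already assumed.

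Next I would fix $R_0>0$ with $\overline{\Omega_-}\subset B_{R_0}$ and, for $R>R_0$, set $\Omega_R:=\Omega_+\cap B_R$. For $R$ large this is a bounded connected open set (using that $\Omega_+$ is connected; alternatively one may replace the balls $B_R$ by any increasing sequence of bounded connected open sets exhausting $\Omega_+$, which exists because $\Omega_+$ is open and connected), and $\mathbf u|_{\Omega_R}\in\mathbf H^1(\Omega_R)$ satisfies $\boldsymbol\varepsilon(\mathbf u|_{\Omega_R})=0$. Lemma~\ref{lemma:rigid}(a) then provides $\mathbf v_R\in\mathcal M$ with $\mathbf u=\mathbf v_R$ a.e.\ on $\Omega_R$. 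If $R_0<R<R'$ then $\Omega_R\subset\Omega_{R'}$ and $\mathbf v_R=\mathbf u=\mathbf v_{R'}$ a.e.\ on the nonempty open set $\Omega_R$; since $\mathbf v_R-\mathbf v_{R'}\in\mathcal M$ has affine components and vanishes on an open set, it vanishes identically, so $\mathbf v_R=\mathbf v_{R'}=:\mathbf v$ is independent of $R$. Because $\bigcup_{R>R_0}\Omega_R=\Omega_+$, this yields $\mathbf u=\mathbf v$ a.e.\ on $\Omega_+$ with $\mathbf v\in\mathcal M$, hence $\mathbf u\in\mathcal M\cap\mathbf W(\Omega_+)$. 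Combined with \eqref{eq:2.30}, this in fact forces $\mathbf u=\mathbf 0$ when $d=3$ and $\mathbf u$ constant when $d=2$.

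I expect the only genuinely non-automatic point to be the gluing step: one must know that the locally defined rigid motions are mutually compatible, which rests on the connectedness of $\Omega_+$ (so that the exhausting pieces overlap in nonempty open sets) together with the elementary rigidity fact that a field in $\mathcal M$ vanishing on a nonempty open set is identically zero --- itself a consequence of $\boldsymbol\varepsilon=0$ forcing each component to be affine on all of $\mathbb R^d$. A minor technical wrinkle is checking that $\Omega_+\cap B_R$ is connected for large $R$, which is why I would keep in reserve the alternative of exhausting $\Omega_+$ directly by bounded connected open sets.
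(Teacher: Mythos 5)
Your argument is correct and follows essentially the same route as the paper: localize to bounded connected pieces, invoke Lemma~\ref{lemma:rigid}(a) on each, and glue the resulting rigid motions, using connectedness of $\Omega_+$. The only difference is cosmetic: the paper covers $\Omega_+$ by the truncated domain $B(\mathbf 0;R_0)\cap\Omega_+$ and disjoint annuli, matching the rigid motions by comparing traces on the separating spheres, whereas your nested exhaustion makes consecutive pieces overlap in open sets, so the gluing reduces to the elementary fact that an element of $\mathcal M$ vanishing on a nonempty open set is identically zero; your hedge about connectedness of $\Omega_+\cap B_R$ mirrors the paper's own ``$R_0$ large enough'' and is adequately handled by your fallback exhaustion.
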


\begin{proof} Consider a domain $D_0:=B(\mathbf 0;R_0)\setminus\overline{\Omega_-}=B(\mathbf 0;R_0)\cap \Omega_+$ for $R_0$ large enough, and consider the annular domains $D_n:=\{ \mathbf x\,:\, R_0+n<|\mathbf x|<R_0+n+1\}$. Then, by Lemma \ref{lemma:rigid}(a), there exists $\mathbf c_n\in \mathcal M$ such that $\mathbf u|_{D_n}\equiv \mathbf c_n$. Comparing the traces of 
$\mathbf c_{n-1}$ and $\mathbf c_n$ on $\partial B(\mathbf 0;R_0+n)$, we show that $\mathbf c_n\equiv\mathbf c_{n+1}$. Therefore $\mathbf u\equiv \mathbf c\in \mathcal M$ in $\Omega_+$.
\end{proof}

\begin{proposition}\label{prop:2.8}
The following norms are equivalent to  the product norm in $\mathbf W(\Omega_+)$.
\begin{itemize}
\item[{\rm (a)}] $\|\boldsymbol\varepsilon(\punto)\|_{\Omega_+}$ (for $d=3$).
\item[{\rm (b)}] $\big(\|\boldsymbol\varepsilon(\punto)\|_{\Omega_+}^2+|\boldsymbol\jmath(\cdot)|^2\big)^{1/2}$ (for $d=2$).
\end{itemize}
In {\rm (b)} $\boldsymbol\jmath(\mathbf u):=(\jmath(u_1),\jmath(u_2))$ and $\jmath:W(\mathbb R^2)\to \mathbb R$ is a bounded functional such that $\jmath(1)\neq 0$.
\end{proposition}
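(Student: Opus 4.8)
The plan is to establish the only nontrivial half of each equivalence, namely a coercivity bound $\|\mathbf u\|_{1,\rho,\Omega_+}\le C\,\mathcal N(\mathbf u)$ for all $\mathbf u\in\mathbf W(\Omega_+)$, where $\mathcal N(\mathbf u)=\|\boldsymbol\varepsilon(\mathbf u)\|_{\Omega_+}$ when $d=3$ and $\mathcal N(\mathbf u)=(\|\boldsymbol\varepsilon(\mathbf u)\|_{\Omega_+}^2+|\boldsymbol\jmath(\mathbf u)|^2)^{1/2}$ when $d=2$. The reverse inequality is immediate, since $\|\boldsymbol\varepsilon(\mathbf u)\|_{\Omega_+}\le\|\mathrm D\mathbf u\|_{\Omega_+}\le\|\mathbf u\|_{1,\rho,\Omega_+}$ and $\boldsymbol\jmath$ is bounded on $\mathbf W(\Omega_+)$ (read, as in the statement, through a fixed bounded extension $W(\Omega_+)\to W(\mathbb R^2)$, or simply as an average over a ball contained in $\Omega_+$). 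I would obtain the coercivity bound in two stages: a localization stage, which replaces the full weighted norm by $\|\boldsymbol\varepsilon(\mathbf u)\|_{\Omega_+}$ together with an $L^2$ norm over a fixed ball, and a compactness stage that absorbs that remaining ball term.

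For the localization, fix balls $B_1=B(\mathbf 0;R_1)\subset B_0=B(\mathbf 0;R_0)$ centred at the origin with $\overline{\Omega_-}\subset B_1$, choose $\chi\in\mathcal D(\mathbb R^d)$ with $\chi\equiv1$ on $\overline{B_1}$ and support in $B_0$, and split $\mathbf u=\chi\mathbf u+(1-\chi)\mathbf u$. The field $(1-\chi)\mathbf u$ vanishes near $\Gamma$, so its extension by zero lies in $\mathbf W(\mathbb R^d)$, and Corollary \ref{cor:2.2} bounds $\|(1-\chi)\mathbf u\|_{1,\rho,\Omega_+}$ by $C\,\|\boldsymbol\varepsilon((1-\chi)\mathbf u)\|_{\Omega_+}$; in dimension two I would invoke Corollary \ref{cor:2.2}(b) with its auxiliary functional taken to be the average over a ball sitting inside $\{\chi\equiv1\}\cap\Omega_+$, where $(1-\chi)\mathbf u$ vanishes, so that the $\boldsymbol\jmath$-term drops out. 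Expanding $\boldsymbol\varepsilon((1-\chi)\mathbf u)=(1-\chi)\boldsymbol\varepsilon(\mathbf u)-\tfrac12(\nabla\chi\otimes\mathbf u+\mathbf u\otimes\nabla\chi)$ and using that $\nabla\chi$ is supported on a bounded annulus contained in $\Omega_+$ gives the bound $\|\boldsymbol\varepsilon(\mathbf u)\|_{\Omega_+}+C\|\mathbf u\|_{L^2(\Omega_+\cap B_0)}$. The piece $\chi\mathbf u$ lives on the bounded Lipschitz domain $\Omega_+\cap B_0$, where $\rho$ is bounded above and below, so Korn's second inequality (Lemma \ref{lemma:rigid}(b)) and the analogous product-rule expansion of $\boldsymbol\varepsilon(\chi\mathbf u)$ yield a bound of the same shape. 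Adding the two contributions produces
\[
\|\mathbf u\|_{1,\rho,\Omega_+}\le C\big(\|\boldsymbol\varepsilon(\mathbf u)\|_{\Omega_+}+\|\mathbf u\|_{L^2(\Omega_+\cap B_0)}\big)\qquad\forall\,\mathbf u\in\mathbf W(\Omega_+).
\]

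For the compactness stage I would argue by contradiction: if the claimed bound fails there is a sequence with $\|\mathbf u_n\|_{1,\rho,\Omega_+}=1$ and $\mathcal N(\mathbf u_n)\to0$. As $\mathbf W(\Omega_+)$ is a Hilbert space, pass to a weakly convergent subsequence $\mathbf u_n\rightharpoonup\mathbf u$; weak lower semicontinuity of $\|\boldsymbol\varepsilon(\cdot)\|_{\Omega_+}$ forces $\boldsymbol\varepsilon(\mathbf u)=0$, so by Lemma \ref{prop:8.4b} and \eqref{eq:2.30} we get $\mathbf u=\mathbf 0$ when $d=3$, while when $d=2$ the limit is a constant vector field, which is forced to vanish because $\boldsymbol\jmath(\mathbf u_n)\to\mathbf 0$, $\boldsymbol\jmath$ is weakly continuous, and $\jmath(1)\neq0$. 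Since $\Omega_+\cap B_0$ is a bounded Lipschitz domain on which $\rho$ is bounded below, the sequence is bounded in $\mathbf H^1(\Omega_+\cap B_0)$, so Rellich's theorem produces a subsequence converging strongly in $\mathbf L^2(\Omega_+\cap B_0)$, necessarily to $\mathbf 0$. Feeding $\|\boldsymbol\varepsilon(\mathbf u_n)\|_{\Omega_+}\to0$ and $\|\mathbf u_n\|_{L^2(\Omega_+\cap B_0)}\to0$ into the localized estimate contradicts $\|\mathbf u_n\|_{1,\rho,\Omega_+}=1$.

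I expect the localization stage to be the real work: because $\boldsymbol\varepsilon$ does not control $\mathrm D\mathbf u$ on a domain with boundary without the Korn machinery, one genuinely has to glue the boundary-free estimate of Corollary \ref{cor:2.2} near infinity (which, crucially, carries no lower-order term in three dimensions and only the $\boldsymbol\jmath$-term in two) to Korn's second inequality near $\Gamma$, carefully tracking the cut-off commutators and, in two dimensions, the auxiliary functional. The compactness stage is then a routine Peetre-type argument, and the continuity direction costs nothing.
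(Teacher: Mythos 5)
Your argument is correct and follows essentially the same route as the paper: the same cut-off decomposition $\mathbf u=\chi\mathbf u+(1-\chi)\mathbf u$ handled by Corollary \ref{cor:2.2} at infinity (with an auxiliary functional that vanishes on the far-field piece) and Korn's inequality (Lemma \ref{lemma:rigid}(b)) near $\Gamma$, followed by removal of the residual $L^2$ ball term via Lemma \ref{prop:8.4b}, \eqref{eq:2.30} and compactness. Your contradiction/Rellich argument is simply the explicit version of the compactness step the paper states more briefly, so no further comparison is needed.
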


\begin{proof}
Take $\varphi\in \mathcal D(\mathbb R^3)$ such that $\varphi\equiv 1$ in a neighborhood of $\overline{\Omega_-}$ and $\varphi$ is supported inside a ball $B$. Decomposing $\mathbf u=(1-\varphi)\mathbf u+\varphi \mathbf u$, we can understand $(1-\varphi)\mathbf u$ as an element of $\mathbf W(\mathbb R^d)$, while $\varphi\mathbf u$ is compactly supported in a ball $B$. We let $B_+:=B\cap \Omega_+$. 

We can apply Corollary \ref{cor:2.2} using $\jmath(u):=\int_\Gamma \gamma u$ for the moment being.
Simple computations show then that
\begin{eqnarray*}
\|\mathbf u\|_{1,\rho,\Omega_+} &\le & \|(1-\varphi)\mathbf u\|_{1,\rho,\mathbb R^d}+ \|\varphi \mathbf u\|_{1,\rho,B_+}\le  C_1 \| \boldsymbol\varepsilon((1-\varphi)\mathbf u)\|_{\mathbb R^d}+C_2\|\varphi\mathbf u\|_{1,B_+}\\
& \le & C_1\|\boldsymbol\varepsilon(\mathbf u)\|_{\Omega_+}+C_3\|\mathbf u\|_{B_+}+C_4\|\mathbf u\|_{1,B_+}.
\end{eqnarray*}
Here we have used that $\jmath((1-\varphi) u_i)=0$ and
\[
\boldsymbol\varepsilon((1-\varphi)\mathbf u)=(1-\varphi)\boldsymbol\varepsilon(\mathbf u)-\smallfrac12\mathbf u\otimes \nabla\varphi-\smallfrac12\nabla\varphi\otimes\mathbf u
\]
Applying Korn's inequality (Lemma \ref{lemma:rigid}(b)) in $B_+$, we can bound
\[
\|\mathbf u\|_{1,B_+}\le C_5\Big(\|\boldsymbol\varepsilon(\mathbf u)\|_{B_+}+\|\mathbf u\|_{B_+}\Big)
\]
and therefore
\[
\|\mathbf u\|_{1,\rho,\Omega_+}\le C_6 \Big( \|\boldsymbol\varepsilon(\mathbf u)\|_{\Omega_+}^2+\|\mathbf u\|_{B_+}^2\Big)^{1/2}.
\]
The remaining part of the argument is slightly different in two and three dimensions:
\begin{itemize}
\item[($d=3$)]
By Proposition \ref{prop:8.4b} and \eqref{eq:2.30}, $\|\boldsymbol\varepsilon(\punto)\|_{\Omega_+}$ is a norm in $\mathbf W(\Omega_+)$ which admits the usual norm as an upper bound. On the other hand the compact term $\|\punto\|_{B_+}$ (note that $W(B_+)=H^1(B_+)$ is compactly embedded in $L^2(B_+)$) makes this norm equivalent to the usual one. This means that we can take this compact term out of the norm and we still have an equivalent norm.
\item[($d=2$)]
By Proposition \ref{prop:8.4b} and \eqref{eq:2.30}, $\big(\|\boldsymbol\varepsilon(\punto)\|_{\Omega_+}^2+|\boldsymbol\jmath(\cdot)|^2\big)^{1/2}$ is a norm in $\mathbf W(\Omega_+)$ and we can bound
\[
\|\mathbf u\|_{1,\rho,\Omega_+}\le C_6 \Big( \|\boldsymbol\varepsilon(\mathbf u)\|_{\Omega_+}^2+|\boldsymbol\jmath(\mathbf u)|^2+\|\mathbf u\|_{B_+}^2\Big)^{1/2}\le C_7 \|\mathbf u\|_{1,\rho,\Omega_+}.
\]
Using the same argument as in the three dimensional case, we can eliminate the $\|\punto\|_{B_+}$ term and still get an equivalent norm.
\end{itemize}
\end{proof}

\section{Review \# 1: normal stresses}

Consider a matrix valued function $\boldsymbol\sigma$ (stress) such that
\[
\boldsymbol\sigma \in L^2_{\mathrm{loc}}(\mathbb R^d)^{d\times d}, \qquad \mathrm{div}\boldsymbol\sigma \in \mathbf L^2_{\mathrm{loc}}(\mathbb R^d\setminus\Gamma).
\]
We can now choose an arbitrary ball $B$ containing $\overline\Omega_-$ and write $B_+:=\Omega_+\cap B$. Then we define:
\begin{eqnarray*}
\langle \boldsymbol\sigma^-\mathbf n,\gamma \mathbf v \rangle_\Gamma &:=&(\boldsymbol\sigma,\mathrm D \mathbf v)_{\Omega_-}-(\mathrm{div}\,\boldsymbol\sigma,\mathbf v)_{\Omega_-} \qquad \forall \mathbf v\in \mathbf H^1(\Omega_-),\\
\langle \boldsymbol\sigma^+\mathbf n,\gamma \mathbf v \rangle_\Gamma &:=&-(\boldsymbol\sigma,\mathrm D \mathbf v)_{B_+}+(\mathrm{div}\,\boldsymbol\sigma,\mathbf v)_{B_+} \qquad \forall \mathbf v\in \mathbf H^1_0(B).
\end{eqnarray*}
These are matrix extensions of the classical definitions of normal traces in $\mathbf H(\mathrm{div},\Omega_-)$ and $\mathbf H_{\mathrm{loc}}(\mathrm{div},\Omega_+)$. With pretty much the same proof as in the scalar case, you show that $\boldsymbol\sigma^\pm\mathbf n\in \mathbf H^{-1/2}(\Gamma)$ and that we can bound
\[
\|\boldsymbol\sigma^-\mathbf n\|_{-1/2,\Gamma} \le C(\|\boldsymbol\sigma\|_{\Omega_-}+\|\mathrm{div}\,\boldsymbol\sigma\|_{\Omega_-}),\qquad 
\|\boldsymbol\sigma^-\mathbf n\|_{-1/2,\Gamma} \le C(\|\boldsymbol\sigma\|_{B_+}+\|\mathrm{div}\,\boldsymbol\sigma\|_{B_+}).
\]
We can also write $\jump{\boldsymbol\sigma\mathbf n}:=\boldsymbol\sigma^-\mathbf n-\boldsymbol\sigma^+\mathbf n$, so that
\[
\langle \jump{\sigma\mathbf n},\gamma \mathbf v \rangle_\Gamma =(\boldsymbol\sigma,\mathrm D \mathbf v)_{\mathbb R^d\setminus\Gamma}-(\mathrm{div}\,\boldsymbol\sigma,\mathbf v)_{\mathbb R^d\setminus\Gamma} \qquad \forall \mathbf v\in \mathbf H^1_0(B).
\]
 If $\boldsymbol\sigma \in L^2(\mathbb R^d)^{d\times d},$ and $ \mathrm{div}\boldsymbol\sigma \in \mathbf L^2(\mathbb R^d\setminus\Gamma)$, then we can apply a density argument and show that
\[
\langle \jump{\sigma\mathbf n},\gamma \mathbf v \rangle_\Gamma =(\boldsymbol\sigma,\mathrm D \mathbf v)_{\mathbb R^d\setminus\Gamma}-(\mathrm{div}\,\boldsymbol\sigma,\mathbf v)_{\mathbb R^d\setminus\Gamma} \qquad \forall \mathbf v\in \mathbf H^1(\mathbb R^d).
\]

\paragraph{Example \# 1: Laplace.}
If $\mathbf u\in \mathbf H^1_{\mathrm{loc}}(\mathbb R^d\setminus\Gamma)$ and $\Delta \mathbf u=0$ in $\mathbb R^d\setminus\Gamma$, choosing
\begin{equation}\label{stress:2}
\boldsymbol\sigma^\Delta (\mathbf u):=\mathrm D \mathbf u,
\end{equation}
we get to the integration by parts formula applied to the components of $\mathbf u$
\begin{equation}\label{stress:3}
\langle \jump{\sigma^\Delta\mathbf n},\gamma \mathbf v \rangle_\Gamma =\langle\jump{\partial_n \mathbf u},\gamma\mathbf v\rangle_\Gamma=(\mathrm D\mathbf u,\mathrm D \mathbf v)_{\mathbb R^d} \qquad \forall \mathbf v\in \mathbf H^1_0(B).
\end{equation}
This same definition can be applied for the non-homogeneous case, when $\Delta_\pm \mathbf u \in \mathbf L^2_{\mathrm{loc}}(\mathbb R^d\setminus\Gamma)$ (by $\Delta_\pm$ we are meaning the Laplace operator applied in $\Omega_\pm$, not in $\mathbb R^d$).

\paragraph{Example \# 2: Lam\'e.}  If $\mathbf u\in \mathbf H^1_{\mathrm{loc}}(\mathbb R^d\setminus\Gamma)$ , we define
\begin{equation}\label{stress:4}
\boldsymbol\sigma^{\mathrm L}(\mathbf u):=2\mu\boldsymbol\varepsilon(\mathbf u)+\lambda \,(\mathrm{div}\,\mathbf u)\,\mathrm I
\end{equation}
and we assume that $\mathrm{div}\,\boldsymbol\sigma^{\mathrm L}(\mathbf u)=\mathbf 0$ in $\mathbb R^d\setminus\Gamma$ (that is, $\mathbf u$ is a solution of the Lam\'e equations in $\mathbb R^d\setminus\Gamma$), then we arrive to Betti's formula
\begin{eqnarray}\label{stress:5}
\langle\jump{\boldsymbol\sigma^{\mathrm L}(\mathbf u)\mathbf n},\gamma\mathbf v\rangle_\Gamma &=& (\boldsymbol\sigma^{\mathrm L}(\mathbf u),\boldsymbol\varepsilon(\mathbf v))_{\mathbb R^d\setminus\Gamma}\\
&=&
2\mu (\boldsymbol\varepsilon(\mathbf u),\boldsymbol\varepsilon(\mathbf v))_{\mathbb R^d\setminus\Gamma}+\lambda (\mathrm{div}\,\mathbf u,\mathrm{div}\,\mathbf v)_{\mathbb R^d\setminus\Gamma}\qquad \forall \mathbf v\in \mathbf H^1_0(B).\nonumber
\end{eqnarray}

\paragraph{Example \# 3: Stokes.} If $(\mathbf u,p)\in \mathbf H^1_{\mathrm{loc}}(\mathbb R^d\setminus\Gamma)\times L^2_{\mathrm{loc}}(\mathbb R^d)$, we define
\begin{equation}\label{stress:6}
\boldsymbol\sigma^{\mathrm S}(\mathbf u,p):=2\nu\boldsymbol\varepsilon(\mathbf u)-p\,\mathrm I,
\end{equation}
and assume that $\mathrm{div}\,\boldsymbol\sigma^{\mathrm{S}}(\mathbf u,p)=\mathbf 0$ in $\mathbb R^d\setminus\Gamma$, we obtain a definition of jump of the normal stress for the Stokes problem.
Since this is a paper on the exterior Stokes problem, let us detail what we obtain in this case. The interior and exterior normal stresses for homogeneous solutions of the Stokes problem 
\[
(\mathbf u,p)\in \mathbf W(\mathbb R^d\setminus\Gamma)\times L^2(\mathbb R^d)\qquad \mbox{such that} \qquad -2\nu\,\mathrm{div}\,\boldsymbol\varepsilon(\mathbf u)+\nabla p =\mathbf  0 \qquad  \mbox{in $\mathbb R^d\setminus\Gamma$},
\]
are then given by $\mathbf t^\pm(\mathbf u,p)=\boldsymbol\sigma^{\mathrm S}(\mathbf u,p)\mathbf n$, i.e.,
\begin{eqnarray*}
\langle \mathbf t^-(\mathbf u,p),\gamma\mathbf v\rangle_\Gamma &=& 2\nu \left( \boldsymbol\varepsilon(\mathbf u),\boldsymbol\varepsilon(\mathbf v)\right)_{\Omega_-}-(p,\mathrm{div}\,\mathbf v)_{\Omega_-}\qquad \forall \mathbf v\in \mathbf H^1(\Omega_-),\\
\langle \mathbf t^+(\mathbf u,p),\gamma\mathbf v\rangle_\Gamma &=& -2\nu \left( \boldsymbol\varepsilon(\mathbf u),\boldsymbol\varepsilon(\mathbf v)\right)_{\Omega_+}+(p,\mathrm{div}\,\mathbf v)_{\Omega_+}\qquad \forall \mathbf v\in \mathbf W(\Omega_+).
\end{eqnarray*}
(Note how we have found it convenient to extend the exterior test space to the corresponding weighted Sobolev space.) We also have
\begin{equation}\label{eq:3.A4}
\|\mathbf t^\pm(\mathbf u,p)\|_{-1/2,\Gamma}\le C_\Gamma\Big( \|\boldsymbol\varepsilon(\mathbf u)\|_{\Omega_\pm}+\|p\|_{\Omega_\pm}\Big).
\end{equation}

\begin{proposition}\label{prop:4.2}\ 
\begin{itemize}
\item[{\rm (a)}] For smooth enough $(\mathbf u,p)$, $\mathbf t^\pm(\mathbf u,p)=2\nu\gamma^\pm(\boldsymbol\varepsilon(\mathbf u)) \,\mathbf n-(\gamma^\pm p)\mathbf n.$
\item[{\rm (b)}] For no interior velocity and constant pressure, the normal stress is proportional to the normal vector field:
$\mathbf t^-(\mathbf 0,1)=-\mathbf n$.
\item[{\rm (c)}] In dimensions $d=2$ and $d=3$, $\mathbf t^-(\mathbf u,p)\in \mathbf H^{-1/2}_0(\Gamma)$.
\item[{\rm (d)}] When $d=2$, $\mathbf t^+(\mathbf u,p)\in \mathbf H^{-1/2}_0(\Gamma)$.
\end{itemize}
\end{proposition}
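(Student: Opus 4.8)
The plan is to obtain all four assertions straight from the variational formulas for $\mathbf t^\pm(\mathbf u,p)$ displayed just above the statement, with no new estimate. Using the algebraic identities recalled in the preliminaries, $\boldsymbol\varepsilon(\mathbf u):\boldsymbol\varepsilon(\mathbf v)=\boldsymbol\varepsilon(\mathbf u):\mathrm D\mathbf v$ and $\mathrm I:\boldsymbol\varepsilon(\mathbf v)=\mathrm{div}\,\mathbf v$, and $\boldsymbol\sigma^{\mathrm S}(\mathbf u,p)=2\nu\boldsymbol\varepsilon(\mathbf u)-p\,\mathrm I$, those formulas can be rewritten compactly as $\langle\mathbf t^-(\mathbf u,p),\gamma\mathbf v\rangle_\Gamma=(\boldsymbol\sigma^{\mathrm S}(\mathbf u,p),\boldsymbol\varepsilon(\mathbf v))_{\Omega_-}$ for all $\mathbf v\in\mathbf H^1(\Omega_-)$, and $\langle\mathbf t^+(\mathbf u,p),\gamma\mathbf v\rangle_\Gamma=-(\boldsymbol\sigma^{\mathrm S}(\mathbf u,p),\boldsymbol\varepsilon(\mathbf v))_{\Omega_+}$ for all $\mathbf v\in\mathbf W(\Omega_+)$. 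From here, each part is just a matter of feeding in the right test field.

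For (a), I would take $(\mathbf u,p)$ regular enough that $\gamma^\pm(\boldsymbol\varepsilon(\mathbf u))$ and $\gamma^\pm p$ make classical sense — say $\mathbf u$ in $\mathbf H^2$ and $p$ in $H^1$ near $\Gamma$, so that $\boldsymbol\sigma^{\mathrm S}(\mathbf u,p)$ has an $\mathbf H^{1/2}(\Gamma)$ trace from each side. The classical Green identity for the matrix field $\boldsymbol\sigma^{\mathrm S}(\mathbf u,p)$ — namely $\mathrm{div}((\boldsymbol\sigma^{\mathrm S})^\top\mathbf v)=(\mathrm{div}\,\boldsymbol\sigma^{\mathrm S})\cdot\mathbf v+\boldsymbol\sigma^{\mathrm S}:\mathrm D\mathbf v$ integrated over $\Omega_-$, respectively over $B_+$ with $\mathbf v$ vanishing on $\partial B$, together with $\mathrm{div}\,\boldsymbol\sigma^{\mathrm S}(\mathbf u,p)=\mathbf 0$ in $\mathbb R^d\setminus\Gamma$ — turns the right-hand sides above into $\langle\gamma^\pm(\boldsymbol\sigma^{\mathrm S}(\mathbf u,p))\,\mathbf n,\gamma\mathbf v\rangle_\Gamma$. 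Since $\gamma^\pm$ surjects onto $\mathbf H^{1/2}(\Gamma)$, this forces $\mathbf t^\pm(\mathbf u,p)=\gamma^\pm(\boldsymbol\sigma^{\mathrm S}(\mathbf u,p))\,\mathbf n=2\nu\,\gamma^\pm(\boldsymbol\varepsilon(\mathbf u))\,\mathbf n-(\gamma^\pm p)\,\mathbf n$.

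For (b), I apply the formula for $\mathbf t^-$ with $\mathbf u=\mathbf 0$, $p\equiv1$ — a legitimate interior field, since $\boldsymbol\sigma^{\mathrm S}(\mathbf 0,1)=-\mathrm I$ is divergence free and $\Omega_-$ is bounded (so $1\in L^2(\Omega_-)$ even when $d=3$) — and it collapses to $\langle\mathbf t^-(\mathbf 0,1),\gamma\mathbf v\rangle_\Gamma=-(1,\mathrm{div}\,\mathbf v)_{\Omega_-}=-\int_\Gamma\mathbf n\cdot\gamma\mathbf v$ for all $\mathbf v\in\mathbf H^1(\Omega_-)$ by the divergence theorem; surjectivity of $\gamma^-$ yields $\mathbf t^-(\mathbf 0,1)=-\mathbf n$. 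For (c) and (d), reading $\mathbf H^{-1/2}_0(\Gamma)$ as $\boldsymbol P_0^\circ$ (the polar set of the constant vector fields), it suffices to show $\langle\mathbf t^\pm(\mathbf u,p),\gamma\mathbf c\rangle_\Gamma=0$ for every constant $\mathbf c$. Since $\boldsymbol\varepsilon(\mathbf c)=0$, the compact formula vanishes as soon as $\mathbf c$ is an admissible test field. For $\mathbf t^-$ this is automatic ($\mathbf c\in\mathbf H^1(\Omega_-)$, $\Omega_-$ bounded), giving (c) in both dimensions — indeed the same line with $\mathbf v=\mathbf c\in\mathcal M$ shows $\mathbf t^-(\mathbf u,p)$ annihilates all rigid motions. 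For $\mathbf t^+$ one needs $\mathbf c\in\mathbf W(\Omega_+)$, and by \eqref{eq:2.30} this holds exactly when $d=2$, where $\mathcal M\cap\mathbf W(\Omega_+)=\boldsymbol P_0(\Omega_+)$, whereas for $d=3$ the intersection is $\{\mathbf 0\}$; this dimensional dichotomy is precisely why (d) is a planar statement, and it is sharp, the rotational part of $\mathcal M_2$ failing to be annihilated by $\mathbf t^+$ (the point vortex $(|\mathbf x|^{-2}(x_2,-x_1),\,0)\in\mathbf W(\Omega_+)$ is an explicit witness).

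I do not anticipate any substantive obstacle. The only real choices in the write-up are pinning down what ``smooth enough'' means in (a) — enough regularity for $\gamma^\pm$ of the stress to be defined and for classical Green to apply — and keeping signs and outer normals consistent when passing between the interior, the $B_+$, and the $\mathbf W(\Omega_+)$ forms of the definition of $\mathbf t^\pm$; everything else is the algebraic reformulation above together with trace surjectivity and \eqref{eq:2.30}.
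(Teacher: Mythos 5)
Your proposal is correct and follows essentially the same route as the paper: (a) and (b) come directly from the definition (via Green's identity and the divergence theorem), and (c), (d) are obtained by testing with constant fields, the restriction of (d) to $d=2$ being exactly the fact recorded in \eqref{eq:2.30} that constants lie in $\mathbf W(\Omega_+)$ only in two dimensions. The closing aside about sharpness of (d) in the rotational direction (the point-vortex ``witness'') is extra and would need an actual computation of the pairing, but it does not affect the proof of the stated proposition.
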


\begin{proof}
Parts (a) and (b) follow from the definition. For part (c), take $\mathbf v\equiv \mathbf e \in \mathbb R^d$. For (d), do the same thing in the exterior domain.
\end{proof}

\section{Thinking of the boundary}

An issue with decompositions of vector fields on a Lipschitz boundary is related to the fact that the normal vector field is allowed to have discontinuities and therefore $\mathbf n\not\in \mathbf H^{1/2}(\Gamma)$. We still need to do some work on $\mathbf H^{\pm1/2}(\Gamma)$ related to the normal direction. What follows is one of several possible ways.
Let $\mathbf c_\Gamma:=\frac1{|\Gamma|}\int_\Gamma \mathbf x$ be the center of mass of $\Gamma$ and let  $\mathbf m(\mathbf x):=\mathbf x-\mathbf c_\Gamma$. Note that $\mathbf m\in \mathbf H^{1/2}(\Gamma)$, 
\begin{equation}\label{eq:3.A1}
\int_\Gamma \mathbf m\cdot\mathbf n =\int_{\Omega_-} \mathrm{div}(\mathbf x-\mathbf c_\Gamma) = d|\Omega_-|, \quad\mbox{and}\quad
\int_\Gamma \mathbf e\cdot\mathbf m =0 \quad \forall \mathbf e\in \boldsymbol P_0(\Gamma).
\end{equation}

\begin{proposition}\label{prop:3.1}
The decomposition
\[
\mathbf H^{1/2}(\Gamma)=\mathbf H^{1/2}_n(\Gamma)\oplus \mathrm{span}\,\{\mathbf m\}, \quad\mbox{with}\quad \mathbf H^{1/2}_n (\Gamma):=\{\boldsymbol\xi\in \mathbf H^{1/2}(\Gamma)\,:\,\int_\Gamma\boldsymbol\xi\cdot\mathbf n=0\},
\]
is stable. Moreover, $\boldsymbol P_0(\Gamma)\subset \mathbf H^{1/2}_n(\Gamma).$
\end{proposition}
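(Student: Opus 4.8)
The plan is to produce an explicit bounded projection of $\mathbf H^{1/2}(\Gamma)$ onto $\mathrm{span}\,\{\mathbf m\}$ along $\mathbf H^{1/2}_n(\Gamma)$; this single object simultaneously delivers that the sum is direct, that the intersection $\mathbf H^{1/2}_n(\Gamma)\cap\mathrm{span}\,\{\mathbf m\}$ is trivial, and that the splitting is topological (``stable''). The key observation is that the linear functional
\[
\ell(\boldsymbol\xi):=\frac1{d\,|\Omega_-|}\,\langle \mathbf n,\boldsymbol\xi\rangle_\Gamma
\]
is well defined and bounded on $\mathbf H^{1/2}(\Gamma)$: since $\Gamma$ is Lipschitz, $\mathbf n\in\mathbf L^\infty(\Gamma)\subset\mathbf L^2(\Gamma)\subset\mathbf H^{-1/2}(\Gamma)$, and on $\mathbf L^2(\Gamma)\times\mathbf H^{1/2}(\Gamma)$ the duality bracket is just the $L^2$ pairing, so $\langle\mathbf n,\boldsymbol\xi\rangle_\Gamma=\int_\Gamma\boldsymbol\xi\cdot\mathbf n$; the normalizing constant is nonzero by the first identity in \eqref{eq:3.A1}, and $\ell(\mathbf m)=1$ by that same identity.

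Next I would set $\pi\boldsymbol\xi:=\ell(\boldsymbol\xi)\,\mathbf m$. Because $\ell(\mathbf m)=1$, the operator $\pi$ is idempotent; it is bounded since $\ell$ is bounded and $\mathbf m\in\mathbf H^{1/2}(\Gamma)$; its range is $\mathrm{span}\,\{\mathbf m\}$; and its kernel is exactly $\mathbf H^{1/2}_n(\Gamma)$, because $\pi\boldsymbol\xi=\mathbf 0$ iff $\int_\Gamma\boldsymbol\xi\cdot\mathbf n=0$. Hence $\mathrm I-\pi$ is a bounded projection of $\mathbf H^{1/2}(\Gamma)$ onto $\mathbf H^{1/2}_n(\Gamma)$ with kernel $\mathrm{span}\,\{\mathbf m\}$, and $\boldsymbol\xi=(\mathrm I-\pi)\boldsymbol\xi+\pi\boldsymbol\xi$ is the claimed stable direct sum; the intersection is trivial since $\mathbf m\notin\ker\ell$ (any nonzero multiple $t\mathbf m$ has $\int_\Gamma t\mathbf m\cdot\mathbf n=t\,d\,|\Omega_-|\neq0$).

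Finally, for $\boldsymbol P_0(\Gamma)\subset\mathbf H^{1/2}_n(\Gamma)$ I would simply note that a constant field $\mathbf e$ has $\mathrm{div}\,\mathbf e=0$, so the divergence theorem gives $\int_\Gamma\mathbf e\cdot\mathbf n=\int_{\Omega_-}\mathrm{div}\,\mathbf e=0$, i.e. $\mathbf e\in\mathbf H^{1/2}_n(\Gamma)$. There is no genuine obstacle in this argument; the only points requiring a word of care are the boundedness of $\ell$ (which rests on the elementary inclusion $\mathbf L^\infty(\Gamma)\subset\mathbf H^{-1/2}(\Gamma)$ and on the compatibility of the duality bracket with the $L^2$ inner product) and the non-degeneracy $\int_\Gamma\mathbf m\cdot\mathbf n=d\,|\Omega_-|\neq0$ already recorded in \eqref{eq:3.A1}.
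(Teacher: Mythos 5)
Your proof is correct and is essentially the paper's argument: the paper decomposes $\boldsymbol\xi=\boldsymbol\xi_0+c\,\mathbf m$ with $c=\frac1{d|\Omega_-|}\int_\Gamma\boldsymbol\xi\cdot\mathbf n$ and bounds $|c|$ by a constant times $\|\boldsymbol\xi\|_\Gamma$, which is exactly your bounded projection $\pi\boldsymbol\xi=\ell(\boldsymbol\xi)\,\mathbf m$ written out componentwise, and it handles $\boldsymbol P_0(\Gamma)\subset\mathbf H^{1/2}_n(\Gamma)$ by the same divergence-theorem remark. Packaging the coefficient as an explicit bounded idempotent is just a cleaner way of stating the same stability estimate.
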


\begin{proof} Given $\boldsymbol\xi\in \mathbf H^{1/2}(\Gamma)$, we can decompose
\[
\boldsymbol\xi=\boldsymbol\xi_0+c\mathbf m, \qquad c:=\frac1{d|\Omega_-|}\int_\Gamma \boldsymbol\xi\cdot\mathbf n=
\frac1{\int_\Gamma \mathbf m\cdot\mathbf n} \int_\Gamma \boldsymbol\xi\cdot\mathbf n, \qquad \boldsymbol\xi_0\in \mathbf H^{1/2}_n(\Gamma).
\]
Since $|c|\le |\Gamma|d^{-1}|\Omega_-|^{-1} \, \|\boldsymbol\xi\|_{\Gamma}$,
the stability of the decomposition follows readily. The inclusion $\boldsymbol P_0(\Gamma)\subset \mathbf H^{1/2}_n(\Gamma)$ is a simple consequence of the divergence theorem.
\end{proof}

\begin{proposition}\label{prop:3.2}
The decomposition
\[
\mathbf H^{-1/2}(\Gamma)=\mathbf H^{-1/2}_m(\Gamma)\oplus \mathrm{span}\,\{ \mathbf n\} , \quad\mbox{with}\quad \mathbf H^{-1/2}_m(\Gamma) :=\{\boldsymbol\lambda\in \mathbf H^{-1/2}(\Gamma)\,:\,\langle \boldsymbol\lambda,\mathbf m\rangle_\Gamma=0\},
\]
is stable. Moreover, $\boldsymbol P_0(\Gamma)\subset \mathbf H^{1/2}_m(\Gamma).$
\end{proposition}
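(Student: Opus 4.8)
The plan is to mirror the proof of Proposition~\ref{prop:3.1}, working on the dual side. First I would observe that $\mathbf n\in\mathbf L^\infty(\Gamma)\subset\mathbf L^2(\Gamma)$, so $\mathbf n$ represents an element of $\mathbf H^{-1/2}(\Gamma)$, and that for this particular functional the duality pairing with $\mathbf m\in\mathbf H^{1/2}(\Gamma)$ reduces to the $L^2$ integral; hence, by the first identity in \eqref{eq:3.A1}, $\langle\mathbf n,\mathbf m\rangle_\Gamma=d|\Omega_-|\neq 0$. In particular $\mathbf n\notin\mathbf H^{-1/2}_m(\Gamma)$, which already yields $\mathbf H^{-1/2}_m(\Gamma)\cap\mathrm{span}\,\{\mathbf n\}=\{\mathbf 0\}$, i.e.\ uniqueness of the splitting.

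Next, given $\boldsymbol\lambda\in\mathbf H^{-1/2}(\Gamma)$, I would set
\[
\boldsymbol\lambda=\boldsymbol\lambda_0+c\,\mathbf n,\qquad
c:=\frac1{d|\Omega_-|}\langle\boldsymbol\lambda,\mathbf m\rangle_\Gamma=\frac1{\langle\mathbf n,\mathbf m\rangle_\Gamma}\langle\boldsymbol\lambda,\mathbf m\rangle_\Gamma,\qquad
\boldsymbol\lambda_0\in\mathbf H^{-1/2}_m(\Gamma),
\]
and check directly that $\langle\boldsymbol\lambda_0,\mathbf m\rangle_\Gamma=\langle\boldsymbol\lambda,\mathbf m\rangle_\Gamma-c\,\langle\mathbf n,\mathbf m\rangle_\Gamma=0$, so $\boldsymbol\lambda_0$ indeed lands in $\mathbf H^{-1/2}_m(\Gamma)$. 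The estimate $|c|\le (d|\Omega_-|)^{-1}\,\|\mathbf m\|_{1/2,\Gamma}\,\|\boldsymbol\lambda\|_{-1/2,\Gamma}$ then gives $\|c\,\mathbf n\|_{-1/2,\Gamma}\le C\,\|\boldsymbol\lambda\|_{-1/2,\Gamma}$ and, by the triangle inequality, $\|\boldsymbol\lambda_0\|_{-1/2,\Gamma}\le (1+C)\,\|\boldsymbol\lambda\|_{-1/2,\Gamma}$; together with the uniqueness from the previous step this is exactly the asserted stability of the decomposition.

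Finally, for the inclusion: any $\mathbf e\in\boldsymbol P_0(\Gamma)$ lies in $\mathbf L^2(\Gamma)\subset\mathbf H^{-1/2}(\Gamma)$, and $\langle\mathbf e,\mathbf m\rangle_\Gamma=\int_\Gamma\mathbf e\cdot\mathbf m=0$ by the second identity in \eqref{eq:3.A1}, so $\mathbf e\in\mathbf H^{-1/2}_m(\Gamma)$ (I read the ``$\mathbf H^{1/2}_m$'' in the statement as a typo for $\mathbf H^{-1/2}_m$). There is no serious obstacle in this proof; the one point that deserves an explicit line is the identification of $\langle\mathbf n,\mathbf m\rangle_\Gamma$ with the surface integral $\int_\Gamma\mathbf n\cdot\mathbf m$, which is legitimate precisely because $\mathbf n$ is represented by an $L^\infty$ density and the $H^{-1/2}(\Gamma)\times H^{1/2}(\Gamma)$ pairing extends the $L^2(\Gamma)$ inner product.
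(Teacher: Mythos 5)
Your proof is correct and follows essentially the same route as the paper: the same explicit splitting $\boldsymbol\lambda=\boldsymbol\lambda_0+c\,\mathbf n$ with $c=\langle\boldsymbol\lambda,\mathbf m\rangle_\Gamma/\langle\mathbf n,\mathbf m\rangle_\Gamma$, the same bound on $|c|$ for stability, and the same use of \eqref{eq:3.A1} for the inclusion of $\boldsymbol P_0(\Gamma)$ (and yes, $\mathbf H^{1/2}_m$ in the statement is a typo for $\mathbf H^{-1/2}_m$). The only difference is that you spell out the points the paper leaves implicit, namely that $\mathbf n\in\mathbf L^\infty(\Gamma)$ defines an element of $\mathbf H^{-1/2}(\Gamma)$ whose pairing with $\mathbf m$ is the surface integral, and that $\langle\mathbf n,\mathbf m\rangle_\Gamma\neq 0$ gives directness of the sum.
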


\begin{proof}
Given $\boldsymbol\lambda\in \mathbf H^{-1/2}(\Gamma)$, we decompose
\[
\boldsymbol\lambda=\boldsymbol\lambda_0+ c\mathbf n, \qquad c=\frac{\langle \boldsymbol\lambda,\mathbf m\rangle_\Gamma}{\langle \mathbf n,\mathbf m\rangle_\Gamma}=\frac1{d|\Omega_-|}\langle\boldsymbol\lambda,\mathbf m\rangle_\Gamma \qquad \boldsymbol\lambda_0\in \mathbf H^{-1/2}_m(\Gamma).
\]
Since $|c| \le d^{-1}|\Omega_-|^{-1}\|\mathbf m\|_{1/2,\Gamma}\,\|\boldsymbol\lambda\|_{-1/2,\Gamma}$, 
the stability of the decomposition follows. The inclusion $\boldsymbol P_0(\Gamma)\subset \mathbf H^{1/2}_m(\Gamma)$ follows by \eqref{eq:3.A1}.
\end{proof}

Note that Proposition \ref{prop:3.2} is actually the dualization of Proposition \ref{prop:3.1}, given the fact that
\[
\mathbf H^{-1/2}_m(\Gamma) =\mathrm{span}\,\{ \mathbf m\}^\circ \qquad \mbox{and}\qquad \mathrm{span}\,\{ \mathbf n\}= \mathbf H^{1/2}_n(\Gamma)^\circ.
\]
We will also consider the following space:
\begin{equation}\label{eq:3.A3}
\mathbf H^{-1/2}_0(\Gamma) :=\{\boldsymbol\lambda\in \mathbf H^{-1/2}(\Gamma)\,:\,\langle \boldsymbol\lambda,\mathbf e\rangle_\Gamma=0\quad\forall \mathbf e \in \boldsymbol P_0(\Gamma)\}=\boldsymbol P_0(\Gamma)^\circ,
\end{equation}

\begin{proposition}\label{prop:3.3} 
The following decomposition is stable:
\[
\mathbf H^{-1/2}(\Gamma)=(\mathbf H^{-1/2}_m(\Gamma)\cap \mathbf H^{-1/2}_0(\Gamma)) \oplus\mathrm{span}\,\{ \mathbf n\} \oplus\boldsymbol P_0(\Gamma),
\]
\end{proposition}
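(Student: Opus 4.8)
The plan is to exhibit the decomposition explicitly, by constructing three bounded projections onto the claimed summands from the ``test fields'' $\mathbf m$ and a basis $\mathbf e_1,\dots,\mathbf e_d$ of $\boldsymbol P_0(\Gamma)$, all of which belong to $\mathbf H^{1/2}(\Gamma)$ and therefore pair boundedly with $\mathbf H^{-1/2}(\Gamma)$. Note first that the three subspaces are closed: $\mathbf H^{-1/2}_m(\Gamma)\cap\mathbf H^{-1/2}_0(\Gamma)$ is a finite intersection of kernels of bounded functionals, while $\mathrm{span}\,\{\mathbf n\}$ and $\boldsymbol P_0(\Gamma)$ are finite dimensional. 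The whole argument is governed by the biorthogonality relations
\[
\langle\mathbf n,\mathbf m\rangle_\Gamma=d|\Omega_-|,\qquad \langle\mathbf n,\mathbf e_j\rangle_\Gamma=\int_\Gamma n_j=0,\qquad \langle\mathbf e_i,\mathbf m\rangle_\Gamma=0,\qquad \langle\mathbf e_i,\mathbf e_j\rangle_\Gamma=\delta_{ij}\,|\Gamma|,
\]
where the first and third come from \eqref{eq:3.A1} and the second is the divergence theorem applied to the constant field $\mathbf e_j$; these say precisely that the $(d+1)\times(d+1)$ pairing matrix between $\{\mathbf n,\mathbf e_1,\dots,\mathbf e_d\}$ and $\{\mathbf m,\mathbf e_1,\dots,\mathbf e_d\}$ is the invertible diagonal matrix $\mathrm{diag}(d|\Omega_-|,|\Gamma|,\dots,|\Gamma|)$.

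Next, given $\boldsymbol\lambda\in\mathbf H^{-1/2}(\Gamma)$, I would set
\[
c:=\frac1{d|\Omega_-|}\,\langle\boldsymbol\lambda,\mathbf m\rangle_\Gamma,\qquad
\mathbf e:=\frac1{|\Gamma|}\sum_{j=1}^d\langle\boldsymbol\lambda,\mathbf e_j\rangle_\Gamma\,\mathbf e_j\in\boldsymbol P_0(\Gamma),\qquad
\boldsymbol\mu:=\boldsymbol\lambda-c\,\mathbf n-\mathbf e.
\]
Pairing $\boldsymbol\mu$ against $\mathbf m$ and against each $\mathbf e_j$ and using the relations above gives $\langle\boldsymbol\mu,\mathbf m\rangle_\Gamma=0$ and $\langle\boldsymbol\mu,\mathbf e_j\rangle_\Gamma=0$ for all $j$, i.e.\ $\boldsymbol\mu\in\mathbf H^{-1/2}_m(\Gamma)\cap\mathbf H^{-1/2}_0(\Gamma)$. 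Since $\boldsymbol\lambda\mapsto\langle\boldsymbol\lambda,\mathbf m\rangle_\Gamma$ and $\boldsymbol\lambda\mapsto\langle\boldsymbol\lambda,\mathbf e_j\rangle_\Gamma$ are bounded functionals (of norms at most $\|\mathbf m\|_{1/2,\Gamma}$ and $\|\mathbf e_j\|_{1/2,\Gamma}$), one gets $|c|$ and the coefficients of $\mathbf e$ bounded by $C\|\boldsymbol\lambda\|_{-1/2,\Gamma}$, hence $\|c\,\mathbf n\|_{-1/2,\Gamma}+\|\mathbf e\|_{-1/2,\Gamma}\le C'\|\boldsymbol\lambda\|_{-1/2,\Gamma}$ and then $\|\boldsymbol\mu\|_{-1/2,\Gamma}\le C''\|\boldsymbol\lambda\|_{-1/2,\Gamma}$. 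This is exactly the asserted stability.

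For the directness of the sum (and uniqueness of the three components) I would suppose $\boldsymbol\mu+c\,\mathbf n+\mathbf e=\mathbf 0$ with the terms in their respective subspaces; pairing with $\mathbf m$ forces $c\,d|\Omega_-|=0$, hence $c=0$, and then pairing with each $\mathbf e_j$ forces the $j$-th coefficient of $\mathbf e$ to vanish, so $\mathbf e=\mathbf 0$ and $\boldsymbol\mu=\mathbf 0$. (Equivalently, one may read this as a refinement of Proposition~\ref{prop:3.2}: since $\boldsymbol P_0(\Gamma)\subset\mathbf H^{-1/2}_m(\Gamma)$, what is new beyond Proposition~\ref{prop:3.2} is just the stable splitting $\mathbf H^{-1/2}_m(\Gamma)=(\mathbf H^{-1/2}_m(\Gamma)\cap\mathbf H^{-1/2}_0(\Gamma))\oplus\boldsymbol P_0(\Gamma)$, realized by the projection $\boldsymbol\lambda\mapsto\mathbf e$ above.) I do not anticipate any real obstacle: everything reduces to the invertibility of that diagonal pairing matrix, and the only point requiring care is recording the biorthogonality relations correctly, in particular $\int_\Gamma n_j=0$ and $\int_\Gamma\mathbf e_i\cdot\mathbf m=0$, both of which are the divergence-theorem identities already collected in \eqref{eq:3.A1}.
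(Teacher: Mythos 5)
Your proposal is correct and follows essentially the same route as the paper: the same coefficients $c=\langle\boldsymbol\lambda,\mathbf m\rangle_\Gamma/\langle\mathbf n,\mathbf m\rangle_\Gamma$ and $\mathbf e=\frac1{|\Gamma|}\sum_j\langle\boldsymbol\lambda,\mathbf e_j\rangle_\Gamma\mathbf e_j$, with the biorthogonality relations from \eqref{eq:3.A1} and the divergence theorem yielding membership of the remainder in $\mathbf H^{-1/2}_m(\Gamma)\cap\mathbf H^{-1/2}_0(\Gamma)$, stability, and directness. You simply spell out the details (the diagonal pairing matrix, the norm bounds, the uniqueness argument) that the paper leaves implicit.
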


\begin{proof} Given $\boldsymbol\lambda\in \mathbf H^{-1/2}(\Gamma)$, we decompose
$\boldsymbol\lambda=\boldsymbol\lambda_0+ c \mathbf n + \mathbf e$, 
where
\[
c:=\frac{\langle \boldsymbol\lambda,\mathbf m\rangle_\Gamma}{\langle \mathbf n,\mathbf m\rangle_\Gamma}\qquad \mbox{and}\qquad
\mathbf e:=\frac1{|\Gamma|}\sum_{j=1}^d \langle\boldsymbol\lambda,\mathbf e_j\rangle_\Gamma\mathbf e_j,
\]
$\{\mathbf e_1,\ldots,\mathbf e_d\}$ being the canonical basis of $\mathbb R^d$. This decomposition is stable and yields $\boldsymbol\lambda_0\in \mathbf H^{-1/2}_m(\Gamma)\cap \mathbf H^{-1/2}_0(\Gamma)$.
\end{proof}

\begin{proposition}[Traces of solenoidal fields]\label{prop:5.6}
The trace operator $\gamma :\mathbf V(\mathbb R^d):=\{ \mathbf u\in \mathbf W(\mathbb R^d)\,:\, \mathrm{div}\,\mathbf u=0\} \to \mathbf H^{1/2}_n(\Gamma)$
is surjective.
\end{proposition}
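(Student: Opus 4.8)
The plan is to prove surjectivity of $\gamma:\mathbf V(\mathbb R^d)\to\mathbf H^{1/2}_n(\Gamma)$ in two stages: first a lifting into $\Omega_-$, then a lifting into $\Omega_+$, glued along $\Gamma$, followed by a correction of the divergence. Let $\boldsymbol\xi\in\mathbf H^{1/2}_n(\Gamma)$ be given, so that $\int_\Gamma\boldsymbol\xi\cdot\mathbf n=0$. First I would use surjectivity of the interior trace $\gamma^-:\mathbf H^1(\Omega_-)\to\mathbf H^{1/2}(\Gamma)$ to pick some $\mathbf w_-\in\mathbf H^1(\Omega_-)$ with $\gamma^-\mathbf w_-=\boldsymbol\xi$, and surjectivity of $\gamma^+:\mathbf W(\Omega_+)\to\mathbf H^{1/2}(\Gamma)$ to pick $\mathbf w_+\in\mathbf W(\Omega_+)$ with $\gamma^+\mathbf w_+=\boldsymbol\xi$. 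Glue these into $\mathbf w\in\mathbf W(\mathbb R^d)$ (the traces match on $\Gamma$, so there is no jump). This $\mathbf w$ has the right trace but its divergence $p:=\mathrm{div}\,\mathbf w\in L^2(\mathbb R^d)$ is generally nonzero.

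The key observation is that $p$ has zero mean: $\int_{\mathbb R^d}p=\int_{\Omega_-}\mathrm{div}\,\mathbf w_-+\int_{\Omega_+}\mathrm{div}\,\mathbf w_+$, and each of these boundary-less integrals must be interpreted via the definition of $\mathbf t^\pm$ or, more directly, $\int_{\Omega_-}\mathrm{div}\,\mathbf w_-=\int_\Gamma\gamma^-\mathbf w_-\cdot\mathbf n=\int_\Gamma\boldsymbol\xi\cdot\mathbf n=0$, while for the exterior piece the analogous identity (valid because $\mathbf w_+\in\mathbf W(\Omega_+)$ decays enough, cf. the discussion around \eqref{eq:3.A1} and the normal-stress definitions of Section~3) gives $\int_{\Omega_+}\mathrm{div}\,\mathbf w_+=-\int_\Gamma\gamma^+\mathbf w_+\cdot\mathbf n=-\int_\Gamma\boldsymbol\xi\cdot\mathbf n=0$. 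Hence $\int_{\mathbb R^d}p=0$; but in fact we do not even need the mean-zero property, because Proposition~\ref{cor:2.5} asserts that $\mathrm{div}:\mathbf W(\mathbb R^d)\to L^2(\mathbb R^d)$ is surjective without any constraint. So I would simply invoke Proposition~\ref{cor:2.5} to obtain $\mathbf z\in\mathbf W(\mathbb R^d)$ with $\mathrm{div}\,\mathbf z=p$. The point where the mean-zero discussion does matter is that the correction must not destroy the trace; this is handled in the next step.

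The correction $\mathbf z$ produced by Proposition~\ref{cor:2.5} need not vanish on $\Gamma$, so $\mathbf w-\mathbf z$ is divergence-free but has the wrong trace. To fix this, I would run the Girault--Sequeira/Bogovski\u\i\ machinery more carefully: solve the divergence problem \emph{separately} in $\Omega_-$ and in $\Omega_+$ with homogeneous boundary data. That is, choose $\mathbf z_-\in\mathbf H^1_0(\Omega_-)$ with $\mathrm{div}\,\mathbf z_-=p|_{\Omega_-}$ — possible because $\int_{\Omega_-}p=0$, by the classical surjectivity of $\mathrm{div}:\mathbf H^1_0(\Omega_-)\to L^2_0(\Omega_-)$ — and choose $\mathbf z_+\in\mathbf W_0(\Omega_+)$ with $\mathrm{div}\,\mathbf z_+=p|_{\Omega_+}$, using \cite[Theorem 3.3]{GiSe:1991} as quoted inside the proof of Proposition~\ref{cor:2.5}, which again requires $\int_{\Omega_+}p=0$. (This is exactly where the two mean-zero computations above are needed, one per subdomain; note $\int_{\Omega_-}p+\int_{\Omega_+}p=0$ and each is separately zero by the two trace identities.) Glue $\mathbf z_-,\mathbf z_+$ into $\mathbf z\in\mathbf W(\mathbb R^d)$: since both have zero trace on $\Gamma$, the glued field lies in $\mathbf W(\mathbb R^d)$ and $\gamma\mathbf z=0$. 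Then $\mathbf u:=\mathbf w-\mathbf z$ satisfies $\mathbf u\in\mathbf W(\mathbb R^d)$, $\mathrm{div}\,\mathbf u=0$, hence $\mathbf u\in\mathbf V(\mathbb R^d)$, and $\gamma\mathbf u=\gamma\mathbf w-\gamma\mathbf z=\boldsymbol\xi$.

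The one genuine obstacle is the exterior mean-zero identity $\int_{\Omega_+}\mathrm{div}\,\mathbf w_+=-\int_\Gamma\gamma^+\mathbf w_+\cdot\mathbf n$: on an unbounded domain Green's formula picks up a flux at infinity, and one must check that membership $\mathbf w_+\in\mathbf W(\Omega_+)$ forces that flux to vanish. I would justify this by testing the exterior normal-stress definition from Section~3 (with $\boldsymbol\sigma=\mathbf I$, so $\mathrm{div}\,\boldsymbol\sigma=\mathbf 0$ and $\boldsymbol\sigma^+\mathbf n=\mathbf n$) against a constant vector $\mathbf e$, exactly as in the proof of Proposition~\ref{prop:4.2}(d), or equivalently by a truncation argument on $B(\mathbf 0;R)\cap\Omega_+$ and letting $R\to\infty$, using that $\mathbf w_+$ and $\rho\,\mathbf w_+$ have the integrability built into the norm $\|\cdot\|_{1,\rho,\Omega_+}$ to kill the sphere integral in the limit. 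Everything else — surjectivity of the traces, stability of the gluings, the classical interior divergence lemma — is either standard or quoted earlier in the paper.
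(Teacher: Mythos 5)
Your overall route --- lift $\boldsymbol\xi$ arbitrarily from both sides, glue, then correct the divergence subdomain-wise by zero-trace fields --- is viable and genuinely different from the paper's, but one step as written is wrong: the exterior ``mean-zero'' identity $\int_{\Omega_+}\mathrm{div}\,\mathbf w_+=-\int_\Gamma\gamma^+\mathbf w_+\cdot\mathbf n$. For a general $\mathbf w_+\in\mathbf W(\Omega_+)$ the left-hand side need not even be defined: $\mathrm{div}\,\mathbf w_+$ is only in $L^2(\Omega_+)$, and $\Omega_+$ has infinite measure, so there is no $L^1$ control. More to the point, membership in $\mathbf W(\Omega_+)$ does \emph{not} force the flux at infinity to vanish: in $d=3$ the field $\mathbf w(\mathbf x)=\mathbf x/|\mathbf x|^2$ lies in $\mathbf W(\Omega_+)$, its divergence $|\mathbf x|^{-2}$ is in $L^2$ but not in $L^1$, and its flux through $\partial B(\mathbf 0;R)$ grows like $R$; in $d=2$ the same field is solenoidal with constant nonzero flux $2\pi$ through every circle. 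The appeal to Proposition~\ref{prop:4.2}(d) does not rescue the claim: testing against a constant $\mathbf e$ is admissible only when constants belong to the exterior weighted space, i.e.\ only for $d=2$ (which is exactly why part (d) is stated only in two dimensions), and even then $\boldsymbol\sigma=\mathrm I\notin L^2(\Omega_+)^{d\times d}$, so the extended duality formula of Section~3 does not apply; likewise the sphere term in your truncation argument does not tend to zero in general.

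Fortunately the flawed step is also unnecessary. The result you cite, \cite[Theorem 3.3]{GiSe:1991} as quoted in the proof of Proposition~\ref{cor:2.5}, states that $\mathrm{div}:\mathbf W_0(\Omega_+)\to L^2(\Omega_+)$ is onto with \emph{no} compatibility condition (on an exterior domain the zero-trace correction may carry flux to infinity), so you may take $\mathbf z_+\in\mathbf W_0(\Omega_+)$ with $\mathrm{div}\,\mathbf z_+=\mathrm{div}\,\mathbf w_+$ directly and delete the exterior mean-zero discussion; only the interior computation $\int_{\Omega_-}\mathrm{div}\,\mathbf w_-=\int_\Gamma\boldsymbol\xi\cdot\mathbf n=0$ is needed, and that is where the hypothesis $\boldsymbol\xi\in\mathbf H^{1/2}_n(\Gamma)$ enters. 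With that repair your proof is complete and differs from the paper's: the paper builds the solenoidal lifting directly by solving interior Stokes Dirichlet problems in $\Omega_-$ and in an annulus $\Omega_b$ (data $\boldsymbol\xi$ on $\Gamma$, $\mathbf 0$ on the outer boundary $\Sigma$) and extending by zero, so no divergence correction is ever needed and the compatibility condition appears as the solvability condition of those Stokes problems. Your approach trades Stokes solvability theory for Bogovskii-type divergence liftings (interior and the Girault--Sequeira exterior version); the paper's yields a compactly supported lifting in one stroke.
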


\begin{proof} If $\mathbf u \in \mathbf V(\mathbb R^d)$, then  $\gamma\mathbf u\in \mathbf H^{1/2}_n(\Gamma)$ by the divergence theorem.
 Let now $\boldsymbol\xi\in \mathbf H^{1/2}_n(\Gamma)$. We decompose $\mathbb R^d$ in the bounded set $\Omega_-$, surrounded by an annular domain $\Omega_b$ with boundary $\Gamma \cup\Sigma$ and an exterior unbounded domain $\Omega_e$ with boundary $\Sigma$. We then solve the following problems
\begin{alignat*}{8}
 -\Delta \mathbf u_-+\nabla p_- = \mathbf 0 & \qquad & \mbox{in $\Omega_-$}, & \qquad\qquad &
-\Delta \mathbf u_b+\nabla p_b = \mathbf 0 & \qquad & \mbox{in $\Omega_b$},\\
 \mathrm{div}\,\mathbf u_- = 0 & & \mbox{in $\Omega_-$}, & &
 \mathrm{div}\,\mathbf u_b = 0 & & \mbox{in $\Omega_b$},\\
 \gamma \mathbf u_-=\boldsymbol\xi, & & \mbox{on $\Gamma$}, & &
 \gamma \mathbf u_b=\boldsymbol\xi, & & \mbox{on $\Gamma$},\\
& & & &
\gamma \mathbf u_b=\mathbf 0, & & \mbox{on $\Sigma$},
\end{alignat*}
and define
\[
\mathbf u:= \left\{ \begin{array}{ll} \mathbf u_-, & \mbox{in $\Omega_-$},\\ \mathbf u_b , & \mbox{in $\Omega_b$},\\
\mathbf 0 & \mbox{in $\Omega_e$}.\end{array}\right.
\]
Then $\mathbf u\in \mathbf W(\mathbb R^d)$ and $\gamma\mathbf u=\boldsymbol\xi$.
\end{proof}

\section{Variational stokeslet distributions}\label{sec:5.1}

In what follows we will denote:
\[
a_{\mathcal O}(\mathbf u,\mathbf v):=2\nu \left( \boldsymbol\varepsilon(\mathbf u),\boldsymbol\varepsilon(\mathbf v)\right)_{\mathcal O}, \qquad 
\jump{\gamma\mathbf u}:=\gamma^-\mathbf u-\gamma^+\mathbf u.
\]
In this precise moment we start working only in the three dimensional case. Section \ref{sec:6.1} will also be focused in three dimensional space, while we will leave the modifications for two dimensions to Section \ref{sec:2d}. 
Given $\boldsymbol\lambda\in \mathbf H^{-1/2}(\Gamma)$, we look for the solution of
\begin{subequations}\label{eq:5.1}
\begin{alignat}{4}
(\mathbf u_\lambda,p_\lambda)\in \mathbf W(\mathbb R^3\setminus\Gamma)\times L^2(\mathbb R^3) &\\
 -2\nu \mathrm{div}\,\boldsymbol\varepsilon(\mathbf u_\lambda)+\nabla p_\lambda = \mathbf 0 & \qquad & \mbox{in $\mathbb R^3\setminus\Gamma$},\\
 \mathrm{div}\,\mathbf u_\lambda = 0 & & \mbox{in $\mathbb R^3\setminus\Gamma$},\\
 \jump{\gamma\mathbf u_\lambda}=0, & & \\
 \jump{\mathbf t(\mathbf u_\lambda,p_\lambda)}= \boldsymbol\lambda. & &
\end{alignat}
\end{subequations}

\begin{proposition}\label{prop:5.1}
Problem \eqref{eq:5.1} is equivalent to
\begin{equation}\label{eq:5.2}
\left[ \begin{array}{l}
\mathbf u_\lambda\in \mathbf W(\mathbb R^3), p_\lambda\in L^2(\mathbb R^3),\\[1.5ex]
\begin{array}{rll} \ds a_{\mathbb R^3}(\mathbf u_\lambda,\mathbf v)-(p_\lambda,\mathrm{div}\,\mathbf v)_{\mathbb R^3} &=\langle\boldsymbol\lambda,\gamma\mathbf v\rangle_\Gamma & \forall \mathbf v\in \mathbf W(\mathbb R^3),\\[1.5ex]
(\mathrm{div}\,\mathbf u_\lambda,q)_{\mathbb R^3} &=0 & \forall q \in L^2(\mathbb R^3).
\end{array}
\end{array} \right.
\end{equation}
Problem \eqref{eq:5.2} is well posed.
\end{proposition}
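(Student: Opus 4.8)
The plan is to establish the equivalence first and then the well-posedness, treating the two parts separately.

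For the equivalence, I would argue in two directions. Suppose $(\mathbf u_\lambda, p_\lambda)$ solves \eqref{eq:5.1}. The transmission condition $\jump{\gamma\mathbf u_\lambda}=0$ means the interior and exterior traces agree, so $\mathbf u_\lambda$ has no jump across $\Gamma$ and hence $\mathbf u_\lambda \in \mathbf W(\mathbb R^3)$ (not merely in $\mathbf W(\mathbb R^3\setminus\Gamma)$); the gluing of $H^1$ functions with matching traces is standard. The divergence-free condition in $\mathbb R^3\setminus\Gamma$ then gives $\mathrm{div}\,\mathbf u_\lambda=0$ in all of $\mathbb R^3$, which is the second equation of \eqref{eq:5.2}. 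For the first equation, take $\mathbf v\in\mathbf W(\mathbb R^3)$, decompose the integral over $\mathbb R^3$ as the sum over $\Omega_-$ and $\Omega_+$, and use the definitions of $\mathbf t^\pm$ from Section~3 (Example~\#3): on each side, $a_{\Omega_\pm}(\mathbf u_\lambda,\mathbf v)-(p_\lambda,\mathrm{div}\,\mathbf v)_{\Omega_\pm}$ produces $\pm\langle \mathbf t^\pm(\mathbf u_\lambda,p_\lambda),\gamma\mathbf v\rangle_\Gamma$ with the appropriate sign, and summing these yields $\langle \jump{\mathbf t(\mathbf u_\lambda,p_\lambda)},\gamma\mathbf v\rangle_\Gamma = \langle\boldsymbol\lambda,\gamma\mathbf v\rangle_\Gamma$. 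Here one must be careful that the exterior formula for $\mathbf t^+$ in the excerpt is stated with test functions in $\mathbf W(\Omega_+)$, which is exactly what a restriction of $\mathbf v\in\mathbf W(\mathbb R^3)$ gives. Conversely, if $(\mathbf u_\lambda,p_\lambda)$ solves \eqref{eq:5.2}, testing the first equation against $\mathbf v\in\boldsymbol{\mathcal D}(\mathbb R^3\setminus\Gamma)$ (supported away from $\Gamma$, on one side or the other) recovers the PDE $-2\nu\,\mathrm{div}\,\boldsymbol\varepsilon(\mathbf u_\lambda)+\nabla p_\lambda=\mathbf 0$ in $\mathbb R^3\setminus\Gamma$ in the distributional sense; $\mathbf u_\lambda\in\mathbf W(\mathbb R^3)$ forces $\jump{\gamma\mathbf u_\lambda}=0$; the second equation gives $\mathrm{div}\,\mathbf u_\lambda=0$; and then re-running the integration-by-parts identity backwards shows $\langle\jump{\mathbf t(\mathbf u_\lambda,p_\lambda)},\gamma\mathbf v\rangle_\Gamma=\langle\boldsymbol\lambda,\gamma\mathbf v\rangle_\Gamma$ for all $\mathbf v$, and since $\gamma:\mathbf W(\mathbb R^3)\to\mathbf H^{1/2}(\Gamma)$ is surjective, $\jump{\mathbf t(\mathbf u_\lambda,p_\lambda)}=\boldsymbol\lambda$.

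For well-posedness of \eqref{eq:5.2}, I would apply the Brezzi theory of abstract mixed problems with the bilinear forms $a_{\mathbb R^3}(\cdot,\cdot)$ on $\mathbf W(\mathbb R^3)\times\mathbf W(\mathbb R^3)$ and $b(\mathbf v,q):=-(q,\mathrm{div}\,\mathbf v)_{\mathbb R^3}$ on $\mathbf W(\mathbb R^3)\times L^2(\mathbb R^3)$. Boundedness of both forms is immediate. The inf-sup (LBB) condition for $b$ is precisely the surjectivity of $\mathrm{div}:\mathbf W(\mathbb R^3)\to L^2(\mathbb R^3)$, which is Proposition~\ref{cor:2.5} together with the open mapping theorem. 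Coercivity of $a_{\mathbb R^3}$ on the kernel $\mathbf V(\mathbb R^3)=\{\mathbf u\in\mathbf W(\mathbb R^3):\mathrm{div}\,\mathbf u=0\}$ — in fact on all of $\mathbf W(\mathbb R^3)$ — is exactly Corollary~\ref{cor:2.2}(a): $2\nu\|\boldsymbol\varepsilon(\mathbf u)\|_{\mathbb R^3}^2$ is equivalent to $\|\mathbf u\|_{1,\rho,\mathbb R^3}^2$ in three dimensions. The right-hand side $\mathbf v\mapsto\langle\boldsymbol\lambda,\gamma\mathbf v\rangle_\Gamma$ is a bounded functional on $\mathbf W(\mathbb R^3)$ because $\gamma$ is bounded into $\mathbf H^{1/2}(\Gamma)$ and $\boldsymbol\lambda\in\mathbf H^{-1/2}(\Gamma)$; the second functional is zero. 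Brezzi's theorem then delivers existence, uniqueness, and continuous dependence.

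The main obstacle is not any single hard estimate but rather the bookkeeping in the equivalence: making sure that the weak normal-stress functionals $\mathbf t^\pm$ defined in Section~3 are being used on the correct test spaces, that the exterior side genuinely requires the weighted space $\mathbf W(\Omega_+)$ rather than a local $H^1$ space (so that no spurious boundary-at-infinity term appears), and that gluing across $\Gamma$ is legitimate — i.e., that a function in $\mathbf W(\mathbb R^3\setminus\Gamma)$ with vanishing trace jump is actually in $\mathbf W(\mathbb R^3)$. Everything else is a direct invocation of the earlier propositions.
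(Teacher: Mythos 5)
Your proposal is correct and takes essentially the same route as the paper: the equivalence is the distributional/integration-by-parts argument the paper dispatches as ``a simple application of distribution theory'' (your care with gluing across $\Gamma$, the test spaces for $\mathbf t^\pm$, and the surjectivity of the trace simply fills in those details), and well-posedness is the same invocation of the Brezzi theory of mixed problems, using surjectivity of $\mathrm{div}:\mathbf W(\mathbb R^3)\to L^2(\mathbb R^3)$ (Proposition \ref{cor:2.5}) and coercivity of $a_{\mathbb R^3}$ on all of $\mathbf W(\mathbb R^3)$ via Corollary \ref{cor:2.2}(a), i.e.\ \eqref{eq:2.1} together with Proposition \ref{prop:2.1}. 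No gaps.
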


\begin{proof} The equivalence of both problems  is a simple application of distribution theory. Note that $\mathrm{div}\,\mathbf u_\lambda=0$ in $\mathbb R^3$ (since $\mathbf u_\lambda$ does not jump across $\Gamma$) but $-2\nu\mathrm{div}\,\boldsymbol\varepsilon(\mathbf u_\lambda)+\nabla p_\lambda =\mathbf 0$ only in $\mathbb R^3\setminus\Gamma$. Well posedness follows from the theory of abstract mixed problems \cite{Brezzi:1974}, \cite[Theorem II.1.1]{BrFo:1991} as we next show. By Proposition \ref{cor:2.5}, $\mathrm{div}:\mathbf W(\mathbb R^3) \to L^2(\mathbb R^3)$ is onto. By \eqref{eq:2.1}, we have coercivity
\[
a_{\mathbb R^3}(\mathbf u,\mathbf u)=2\nu \|\boldsymbol\varepsilon(\mathbf u)\|_{\mathbb R^3}^2\ge \nu \|\mathrm D\mathbf u\|_{\mathbb R^3}^2,
\]
and the right hand side of the inequality is an equivalent norm in $\mathbf W(\mathbb R^3)$ by Proposition \ref{prop:2.1} (see also Corollary \ref{cor:2.2}), so the bilinear form in the diagonal is coercive in the entire space.
\end{proof}

\paragraph{Definition.} To $\boldsymbol\lambda\in \mathbf H^{-1/2}(\Gamma)$ we associate $(\mathbf u_\lambda,p_\lambda)$ the solution of \eqref{eq:5.1} and then define the {\bf  single layer potential}
\[
\mathrm S_u\boldsymbol\lambda := \mathbf u_\lambda, \qquad \mathrm S_p\boldsymbol\lambda:= p_\lambda
\]
and the associated operators
\[
\mathrm V\boldsymbol\lambda :=\gamma^\pm \mathbf u_\lambda, \qquad \mathrm K^t\boldsymbol\lambda:=\ave{\mathbf t(\mathbf u_\lambda,p_\lambda)}=\smallfrac12\big( \mathbf t^+(\mathbf u_\lambda,p_\lambda)+ \mathbf t^-(\mathbf u_\lambda,p_\lambda)\big),
\]
respectively called single layer operator and adjoint double layer operator (it will take a while to see why the name of the latter).

\begin{proposition}[Mapping properties]\label{prop:5.3}
The following operators are bounded: 
\begin{eqnarray*}
(\mathrm S_u,\mathrm S_p) &:& \mathbf H^{-1/2}(\Gamma) \to \mathbf W(\mathbb R^3)\times L^2(\mathbb R^3),\\
\mathrm V &:& \mathbf H^{-1/2}(\Gamma) \to \mathbf H^{1/2}(\Gamma),\\
\mathrm K^t &:& \mathbf H^{-1/2}(\Gamma) \to \mathbf H^{-1/2}(\Gamma).
\end{eqnarray*}
\end{proposition}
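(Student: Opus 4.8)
The plan is to read all three statements off the well-posedness of the mixed problem \eqref{eq:5.2} (Proposition \ref{prop:5.1}) together with the normal-stress estimate \eqref{eq:3.A4}; no new analytical ingredient is required. First I would record the a priori bound that comes for free from the Brezzi theory applied to \eqref{eq:5.2}: there is a constant $C$ with $\|\mathbf u_\lambda\|_{1,\rho,\mathbb R^3}+\|p_\lambda\|_{\mathbb R^3}\le C\,\|F_\lambda\|$, where $F_\lambda(\mathbf v):=\langle\boldsymbol\lambda,\gamma\mathbf v\rangle_\Gamma$ is the right-hand side functional on $\mathbf W(\mathbb R^3)$ and $\|\punto\|$ is its dual norm. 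Since $|F_\lambda(\mathbf v)|\le\|\boldsymbol\lambda\|_{-1/2,\Gamma}\,\|\gamma\mathbf v\|_{1/2,\Gamma}\le C\,\|\boldsymbol\lambda\|_{-1/2,\Gamma}\,\|\mathbf v\|_{1,\rho,\mathbb R^3}$ by boundedness of the weighted trace $\gamma:\mathbf W(\mathbb R^3)\to\mathbf H^{1/2}(\Gamma)$, this gives
\[
\|\mathbf u_\lambda\|_{1,\rho,\mathbb R^3}+\|p_\lambda\|_{\mathbb R^3}\le C\,\|\boldsymbol\lambda\|_{-1/2,\Gamma},
\]
which is exactly the boundedness of $(\mathrm S_u,\mathrm S_p)$.

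Boundedness of $\mathrm V$ is then immediate by composition: $\mathrm V\boldsymbol\lambda=\gamma^\pm\mathbf u_\lambda$, with $\gamma^+\mathbf u_\lambda=\gamma^-\mathbf u_\lambda$ since $\jump{\gamma\mathbf u_\lambda}=0$, so $\mathrm V=\gamma\circ\mathrm S_u$ and $\|\mathrm V\boldsymbol\lambda\|_{1/2,\Gamma}\le C\,\|\mathbf u_\lambda\|_{1,\rho,\mathbb R^3}\le C\,\|\boldsymbol\lambda\|_{-1/2,\Gamma}$.

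For $\mathrm K^t$ I would note that, by construction, $(\mathbf u_\lambda,p_\lambda)$ is a homogeneous Stokes solution in $\mathbb R^3\setminus\Gamma$ (i.e. $\mathrm{div}\,\boldsymbol\sigma^{\mathrm S}(\mathbf u_\lambda,p_\lambda)=\mathbf 0$ there) with $\boldsymbol\varepsilon(\mathbf u_\lambda)$ square integrable on all of $\mathbb R^3$ and $p_\lambda\in L^2(\mathbb R^3)$, so both one-sided normal stresses $\mathbf t^\pm(\mathbf u_\lambda,p_\lambda)\in\mathbf H^{-1/2}(\Gamma)$ are well defined and \eqref{eq:3.A4} applies on each side. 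Hence
\[
\|\mathrm K^t\boldsymbol\lambda\|_{-1/2,\Gamma}\le\smallfrac12\big(\|\mathbf t^+(\mathbf u_\lambda,p_\lambda)\|_{-1/2,\Gamma}+\|\mathbf t^-(\mathbf u_\lambda,p_\lambda)\|_{-1/2,\Gamma}\big)\le C\big(\|\boldsymbol\varepsilon(\mathbf u_\lambda)\|_{\mathbb R^3}+\|p_\lambda\|_{\mathbb R^3}\big)\le C\,\|\boldsymbol\lambda\|_{-1/2,\Gamma},
\]
using $\|\punto\|_{\Omega_-}+\|\punto\|_{\Omega_+}\le\sqrt2\,\|\punto\|_{\mathbb R^3}$, the bound $\|\boldsymbol\varepsilon(\mathbf u_\lambda)\|_{\mathbb R^3}\le\|\mathrm D\mathbf u_\lambda\|_{\mathbb R^3}\le\|\mathbf u_\lambda\|_{1,\rho,\mathbb R^3}$, and the a priori estimate from the first paragraph.

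I do not expect a genuine obstacle: the statement is essentially a corollary, and the only point that needs care is bookkeeping in the first step, namely invoking the Brezzi estimate with the correct product norm on $\mathbf W(\mathbb R^3)\times L^2(\mathbb R^3)$ and bounding $\|F_\lambda\|$ by the (already established) continuity of the weighted trace operator. The real work was front-loaded into Proposition \ref{prop:5.1} and into recording \eqref{eq:3.A4}, which was written precisely for this use.
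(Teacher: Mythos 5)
Your argument is correct and is exactly the route the paper takes: its one-line proof invokes the well-posedness of \eqref{eq:5.2} (i.e.\ the Brezzi a priori bound you spell out), the definitions of $\mathrm V$ and $\mathrm K^t$, and the normal-stress estimate \eqref{eq:3.A4}. You have simply written out the bookkeeping that the paper leaves implicit, so there is nothing to correct.
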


\begin{proof} It is a direct consequence of the well posedness of \eqref{eq:5.2}, the definitions of the operators and \eqref{eq:3.A4}.
\end{proof}

\begin{proposition}[Jump relations]\label{prop:5.4}
Let $\boldsymbol\lambda\in \mathbf H^{-1/2}(\Gamma)$ and let $\mathbf u_\lambda:=\mathrm S_u\boldsymbol\lambda$, $p_\lambda:=\mathrm S_p\boldsymbol\lambda$. Then
\[
\jump{\gamma\mathbf u_\lambda}=\mathbf 0, \qquad 
\jump{\mathbf t(\mathbf u_\lambda,p_\lambda)}=\boldsymbol\lambda, \qquad \mbox{and}\qquad 
\mathbf t^\pm (\mathbf u_\lambda,p_\lambda)=\mp \smallfrac12 \boldsymbol\lambda +\mathrm K^t\boldsymbol\lambda.
\]
\end{proposition}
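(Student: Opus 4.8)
The plan is to read off the first two jump relations directly from the way $(\mathbf u_\lambda,p_\lambda)$ was constructed, and then to obtain the third one by a trivial linear solve. By definition $(\mathbf u_\lambda,p_\lambda)=(\mathrm S_u\boldsymbol\lambda,\mathrm S_p\boldsymbol\lambda)$ is the solution of \eqref{eq:5.1}, whose last two lines are precisely $\jump{\gamma\mathbf u_\lambda}=\mathbf 0$ and $\jump{\mathbf t(\mathbf u_\lambda,p_\lambda)}=\boldsymbol\lambda$ (here one uses that, by Proposition~\ref{prop:5.1}, $(\mathbf u_\lambda,p_\lambda)$ solves the Stokes system in $\mathbb R^3\setminus\Gamma$, so $\boldsymbol\sigma^{\mathrm S}(\mathbf u_\lambda,p_\lambda)$ has the regularity needed to give meaning to $\mathbf t^\pm(\mathbf u_\lambda,p_\lambda)$ and to \eqref{eq:3.A4}).

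To keep the presentation self-contained I would re-derive the second relation from the variational form \eqref{eq:5.2}. The identity $\jump{\gamma\mathbf u_\lambda}=\mathbf 0$ is immediate because $\mathbf u_\lambda\in\mathbf W(\mathbb R^3)$ has a single well-defined trace on $\Gamma$. Then, given an arbitrary $\boldsymbol\eta\in\mathbf H^{1/2}(\Gamma)$, I would choose $\mathbf v\in\mathbf W(\mathbb R^3)$ with $\gamma\mathbf v=\boldsymbol\eta$ (the trace onto $\mathbf H^{1/2}(\Gamma)$ is surjective, e.g.\ by extending a lifting from $\Omega_-$). Adding the interior and exterior normal-stress identities for $\boldsymbol\sigma^{\mathrm S}(\mathbf u_\lambda,p_\lambda)$ recalled in the review of normal stresses (Section~3) --- with the exterior test space taken, as there, to be $\mathbf W(\Omega_+)$, and using $\gamma^-\mathbf v=\gamma^+\mathbf v=\gamma\mathbf v$ --- gives
\[
\langle\jump{\mathbf t(\mathbf u_\lambda,p_\lambda)},\boldsymbol\eta\rangle_\Gamma
=2\nu\,(\boldsymbol\varepsilon(\mathbf u_\lambda),\boldsymbol\varepsilon(\mathbf v))_{\mathbb R^3}-(p_\lambda,\mathrm{div}\,\mathbf v)_{\mathbb R^3}
=a_{\mathbb R^3}(\mathbf u_\lambda,\mathbf v)-(p_\lambda,\mathrm{div}\,\mathbf v)_{\mathbb R^3},
\]
which by the first equation of \eqref{eq:5.2} equals $\langle\boldsymbol\lambda,\boldsymbol\eta\rangle_\Gamma$. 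Since $\boldsymbol\eta$ is arbitrary, $\jump{\mathbf t(\mathbf u_\lambda,p_\lambda)}=\boldsymbol\lambda$ in $\mathbf H^{-1/2}(\Gamma)$.

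For the last relation, recall that by definition $\mathrm K^t\boldsymbol\lambda=\ave{\mathbf t(\mathbf u_\lambda,p_\lambda)}=\smallfrac12\big(\mathbf t^+(\mathbf u_\lambda,p_\lambda)+\mathbf t^-(\mathbf u_\lambda,p_\lambda)\big)$, while the relation just established reads $\mathbf t^-(\mathbf u_\lambda,p_\lambda)-\mathbf t^+(\mathbf u_\lambda,p_\lambda)=\boldsymbol\lambda$; solving this pair of identities for $\mathbf t^+$ and $\mathbf t^-$ yields $\mathbf t^\pm(\mathbf u_\lambda,p_\lambda)=\mp\smallfrac12\boldsymbol\lambda+\mathrm K^t\boldsymbol\lambda$. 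There is essentially no obstacle here; the only step requiring attention is the additivity used above, where one must make sure that the interior and exterior normal-stress functionals of Section~3 --- defined over $\mathbf H^1(\Omega_-)$ and, as extended, over $\mathbf W(\Omega_+)$ --- can both be evaluated at the restrictions of a single global field $\mathbf v\in\mathbf W(\mathbb R^3)$, so that their sum reassembles the integral over all of $\mathbb R^3$ together with one duality pairing against the common trace $\gamma\mathbf v$, and that $\Gamma$ being negligible lets one pass from $\mathbb R^3\setminus\Gamma$ to $\mathbb R^3$ in the volume integrals. If one is content to simply invoke the equivalence in Proposition~\ref{prop:5.1}, the whole statement collapses to that equivalence followed by the one-line linear algebra above.
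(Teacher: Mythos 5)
Your proposal is correct and follows essentially the same route as the paper: the first two relations are read off from the defining transmission problem \eqref{eq:5.1} (equivalently, via the variational form \eqref{eq:5.2} and the weak definition of the normal stresses, which is just the spelled-out version of the same fact), and the one-sided stresses are then obtained by combining $\jump{\mathbf t(\mathbf u_\lambda,p_\lambda)}=\boldsymbol\lambda$ with the definition $\mathrm K^t\boldsymbol\lambda=\ave{\mathbf t(\mathbf u_\lambda,p_\lambda)}$, exactly the add-and-subtract step in the paper's proof. The extra care you take about summing the interior and exterior normal-stress identities over a single global test function is sound and consistent with Section~3, but it adds detail rather than a different argument.
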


\begin{proof} The jump properties are given by definition of the potential. The normal stresses of the potential can then be computed by adding and subtracting the equalities
\[
\smallfrac12\jump{\mathbf t(\mathbf u_\lambda,p_\lambda)}=\smallfrac12\boldsymbol\lambda \qquad \mbox{and}\qquad \ave{\mathbf t(\mathbf u_\lambda,p_\lambda)}=\mathrm K^t\boldsymbol\lambda.
\]
\end{proof}

\paragraph{A particular solution.} For $\boldsymbol\lambda:=\mathbf n$, it is simple to check that $\mathbf u_\lambda\equiv 
\mathbf 0$ and $p_\lambda:=-\chi_{\Omega_-}$ is the solution of \eqref{eq:5.2}. Therefore $\mathrm S_u \mathbf n\equiv \mathbf 0$, $ \mathrm S_p \mathbf n=-\chi_{\Omega_-}$, and by Proposition \ref{prop:4.2}(b)
\[
\mathrm V \mathbf n=\mathbf 0, \qquad \mathrm K^t\mathbf n=-\smallfrac12\mathbf t^-(\mathbf 0,1)=\smallfrac12\mathbf n.
\]
The last identity can also be written
$(\mathrm K^t-\smallfrac12\mathrm I)\mathbf n=\mathbf 0.$

\begin{proposition}[Symmetry of $\mathrm V$]\label{prop:5.5}
For all $\boldsymbol\lambda,\boldsymbol\mu\in \mathbf H^{-1/2}(\Gamma)$,
\begin{equation}\label{eq:5.3}
\langle \boldsymbol\mu,\mathrm V\boldsymbol\lambda\rangle_\Gamma = \langle \boldsymbol\lambda,\mathrm V\boldsymbol\mu\rangle_\Gamma, \qquad \langle \boldsymbol\lambda,\mathrm V\boldsymbol\lambda\rangle_\Gamma\ge 0.
\end{equation}
Also
\[
\mathrm{Ker}\,\mathrm V =\mathrm{span}\,\{ \mathbf n\} \qquad \mbox{and} \qquad \mathrm{Range}\, \mathrm V \subset \mathbf H^{1/2}_n(\Gamma).
\]
\end{proposition}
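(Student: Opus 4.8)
The plan is to read everything off the well-posed variational problem \eqref{eq:5.2}. Write $\mathbf u_\lambda:=\mathrm S_u\boldsymbol\lambda$, $p_\lambda:=\mathrm S_p\boldsymbol\lambda$, and similarly for $\boldsymbol\mu$. First I would use $\mathbf v=\mathbf u_\mu\in\mathbf W(\mathbb R^3)$ as a test function in the first equation of \eqref{eq:5.2} written for $\boldsymbol\lambda$; since $\mathrm{div}\,\mathbf u_\mu=0$ the pressure term drops and
\[
a_{\mathbb R^3}(\mathbf u_\lambda,\mathbf u_\mu)=\langle\boldsymbol\lambda,\gamma\mathbf u_\mu\rangle_\Gamma=\langle\boldsymbol\lambda,\mathrm V\boldsymbol\mu\rangle_\Gamma .
\]
Exchanging the roles of $\boldsymbol\lambda$ and $\boldsymbol\mu$ and using the symmetry of the bilinear form $a_{\mathbb R^3}(\punto,\punto)=2\nu(\boldsymbol\varepsilon(\punto),\boldsymbol\varepsilon(\punto))_{\mathbb R^3}$ gives the first identity in \eqref{eq:5.3}; taking $\boldsymbol\mu=\boldsymbol\lambda$ gives $\langle\boldsymbol\lambda,\mathrm V\boldsymbol\lambda\rangle_\Gamma=2\nu\|\boldsymbol\varepsilon(\mathbf u_\lambda)\|_{\mathbb R^3}^2\ge 0$, which is the second.

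For the range inclusion I would use that $\mathbf u_\lambda\in\mathbf W(\mathbb R^3)$ with $\mathrm{div}\,\mathbf u_\lambda=0$ in all of $\mathbb R^3$ (not just off $\Gamma$, since $\mathbf u_\lambda$ has no velocity jump, cf.\ Proposition \ref{prop:5.1}). The divergence theorem on $\Omega_-$ then yields $\int_\Gamma\mathrm V\boldsymbol\lambda\cdot\mathbf n=\int_{\Omega_-}\mathrm{div}\,\mathbf u_\lambda=0$, i.e.\ $\mathrm V\boldsymbol\lambda\in\mathbf H^{1/2}_n(\Gamma)$.

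For the kernel, the inclusion $\mathrm{span}\,\{\mathbf n\}\subset\mathrm{Ker}\,\mathrm V$ is exactly the particular solution already computed in the text ($\mathrm S_u\mathbf n\equiv\mathbf 0$, hence $\mathrm V\mathbf n=\mathbf 0$). Conversely, if $\mathrm V\boldsymbol\lambda=\mathbf 0$ then $\langle\boldsymbol\lambda,\mathrm V\boldsymbol\lambda\rangle_\Gamma=0$, so by the displayed identity $\boldsymbol\varepsilon(\mathbf u_\lambda)=0$ on $\mathbb R^3$; since $\|\boldsymbol\varepsilon(\punto)\|_{\mathbb R^3}$ is an equivalent norm on $\mathbf W(\mathbb R^3)$ (Corollary \ref{cor:2.2}(a)), this forces $\mathbf u_\lambda\equiv\mathbf 0$. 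The momentum equation then gives $\nabla p_\lambda=\mathbf 0$ in $\mathbb R^3\setminus\Gamma$, so $p_\lambda$ is constant on each side of $\Gamma$; the constraint $p_\lambda\in L^2(\mathbb R^3)$ kills the constant on the unbounded component, hence $p_\lambda=c\,\chi_{\Omega_-}$ for some $c\in\mathbb R$. Finally the jump relation $\jump{\mathbf t(\mathbf u_\lambda,p_\lambda)}=\boldsymbol\lambda$, together with $\mathbf t^+(\mathbf 0,0)=\mathbf 0$ (immediate from the definition of $\mathbf t^+$) and $\mathbf t^-(\mathbf 0,c)=-c\,\mathbf n$ (Proposition \ref{prop:4.2}(b) and linearity), gives $\boldsymbol\lambda=-c\,\mathbf n\in\mathrm{span}\,\{\mathbf n\}$.

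The only genuinely delicate point is the implication $\boldsymbol\varepsilon(\mathbf u_\lambda)=0\Rightarrow\mathbf u_\lambda\equiv\mathbf 0$: it is not algebraic but rests on the weighted Korn-type inequality of Corollary \ref{cor:2.2}(a), i.e.\ on the fact that $\mathbf W(\mathbb R^3)$ contains no nonzero field with vanishing strain (no nontrivial rigid motion survives in the three-dimensional weighted space, by \eqref{eq:2.A1}). Everything else is bookkeeping with the variational problem, the divergence theorem, and the jump definitions of Section \ref{sec:5.1}.
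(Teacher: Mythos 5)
Your proof is correct and follows essentially the same route as the paper: the identity $\langle\boldsymbol\mu,\mathrm V\boldsymbol\lambda\rangle_\Gamma=a_{\mathbb R^3}(\mathbf u_\lambda,\mathbf u_\mu)$ obtained by testing the variational problem with the other potential, and then the kernel characterization via the equivalent norm $\|\boldsymbol\varepsilon(\punto)\|_{\mathbb R^3}$ on $\mathbf W(\mathbb R^3)$, $\nabla p_\lambda=\mathbf 0$ off $\Gamma$, the $L^2$ constraint forcing $p_\lambda\in\mathrm{span}\{\chi_{\Omega_-}\}$, and $\jump{\mathbf t(\mathbf 0,\chi_{\Omega_-})}=-\mathbf n$. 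The only (harmless) deviation is the range inclusion, which you get from the divergence theorem on $\Omega_-$ applied to the solenoidal field $\mathbf u_\lambda$, whereas the paper deduces it from the symmetry just proved together with $\mathrm V\mathbf n=\mathbf 0$, via $0=\langle\boldsymbol\lambda,\mathrm V\mathbf n\rangle_\Gamma=\langle\mathbf n,\mathrm V\boldsymbol\lambda\rangle_\Gamma$; both arguments are valid.
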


\begin{proof}
We associate $\boldsymbol\lambda\mapsto (\mathbf u_\lambda,p_\lambda)$ and $\boldsymbol\mu\mapsto (\mathbf u_\mu,p_\mu)$. Then
\[
\langle \boldsymbol\mu,\mathrm V\boldsymbol\lambda\rangle_\Gamma= \langle \boldsymbol\mu,\gamma \mathbf u_\lambda\rangle_\Gamma= a_{\mathbb R^3}(\mathbf u_\mu,\mathbf u_\lambda)-(p_\mu,\mathrm{div}\,\mathbf u_\lambda)_{\mathbb R^3}=a_{\mathbb R^3}(\mathbf u_\mu,\mathbf u_\lambda).
\]
This proves \eqref{eq:5.3}.
Since $\mathrm  V\mathbf n=\mathbf 0$ (see above), it follows that
\[
0 =\langle\boldsymbol\lambda,\mathrm V\mathbf n\rangle_\Gamma=\langle \mathbf n,\mathrm V\boldsymbol\lambda\rangle_\Gamma,
\]
which, by definition, means that $\mathrm V\boldsymbol\lambda\in \mathbf H^{1/2}_n(\Gamma)$.

If $\mathrm V\boldsymbol\lambda=\mathbf 0$, then $a_{\mathbb R^3}(\mathbf u_\lambda,\mathbf u_\lambda)=0$. By Corollary \ref{cor:2.2}, this shows that $\mathbf u_\lambda\equiv \mathbf 0$. Going to the Stokes equations, this shows that $\nabla p_\lambda =\mathbf 0$ in $\mathbb R^3\setminus\Gamma$. Since $p_\lambda\in L^2(\mathbb R^3)$, this implies that $p_\lambda\in \mathrm{span}\{\chi_{\Omega_-}\}$. Using the fact that $\boldsymbol\lambda=\jump{\mathbf t(\mathbf u_\lambda,p_\lambda)}$ it follows that $\boldsymbol\lambda=c\jump{\mathbf t(\mathbf 0,\chi_{\Omega_-})}=-c\mathbf n$ for some $c\in \mathbb R$.
\end{proof}

\begin{proposition}[Coercivity of $\mathrm V$]\label{prop:5.7}
There exists $C_\Gamma>0$ such that
\[
\langle \boldsymbol\lambda,\mathrm V\boldsymbol\lambda\rangle_\Gamma\ge C_\Gamma \|\boldsymbol\lambda\|_{-1/2,\Gamma}^2 \qquad \forall \boldsymbol\lambda\in \mathbf H^{-1/2}_m(\Gamma).
\]
Therefore $\mathrm{Range}\,\mathrm V=\mathbf H^{1/2}_n(\Gamma)$.
\end{proposition}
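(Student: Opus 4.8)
The plan is to prove the coercivity estimate by a N\'ed\'elec--style compactness (contradiction) argument, and then to read off the range identity from the Lax--Milgram lemma applied to the bilinear form $\langle\punto,\mathrm V\punto\rangle_\Gamma$ on the Hilbert space $\mathbf H^{-1/2}_m(\Gamma)$.

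First I would suppose the estimate fails, so that there is a sequence $\boldsymbol\lambda_n\in\mathbf H^{-1/2}_m(\Gamma)$ with $\|\boldsymbol\lambda_n\|_{-1/2,\Gamma}=1$ and $\langle\boldsymbol\lambda_n,\mathrm V\boldsymbol\lambda_n\rangle_\Gamma\to 0$; set $(\mathbf u_n,p_n):=(\mathrm S_u\boldsymbol\lambda_n,\mathrm S_p\boldsymbol\lambda_n)$. As in the proof of Proposition \ref{prop:5.5}, $\langle\boldsymbol\lambda_n,\mathrm V\boldsymbol\lambda_n\rangle_\Gamma=a_{\mathbb R^3}(\mathbf u_n,\mathbf u_n)=2\nu\|\boldsymbol\varepsilon(\mathbf u_n)\|_{\mathbb R^3}^2$, so $\|\boldsymbol\varepsilon(\mathbf u_n)\|_{\mathbb R^3}\to 0$ and hence, by \eqref{eq:2.1} and Proposition \ref{prop:2.1}(a), $\mathbf u_n\to\mathbf 0$ \emph{strongly} in $\mathbf W(\mathbb R^3)$. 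Since $(\mathbf u_n,p_n)$ is bounded in $\mathbf W(\mathbb R^3)\times L^2(\mathbb R^3)$ (Proposition \ref{prop:5.3}), after extracting a subsequence I may assume $p_n\rightharpoonup p_*$ weakly in $L^2(\mathbb R^3)$ and $\boldsymbol\lambda_n\rightharpoonup\boldsymbol\lambda_*$ weakly in $\mathbf H^{-1/2}(\Gamma)$. Passing to the limit in the first equation of \eqref{eq:5.2} and using $\mathbf u_n\to\mathbf 0$ gives $-(p_*,\mathrm{div}\,\mathbf v)_{\mathbb R^3}=\langle\boldsymbol\lambda_*,\gamma\mathbf v\rangle_\Gamma$ for all $\mathbf v\in\mathbf W(\mathbb R^3)$; testing against $\mathbf v\in\boldsymbol{\mathcal D}(\mathbb R^3\setminus\Gamma)$ shows $\nabla p_*=\mathbf 0$ off $\Gamma$, so $p_*=c\,\chi_{\Omega_-}$ (because $p_*\in L^2(\mathbb R^3)$ and $\Omega_+$ is connected), and then surjectivity of $\gamma$ on $\mathbf W(\mathbb R^3)$ forces $\boldsymbol\lambda_*=-c\,\mathbf n$. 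Since $\mathbf H^{-1/2}_m(\Gamma)$ is weakly closed, $\boldsymbol\lambda_*\in\mathbf H^{-1/2}_m(\Gamma)$, so $0=\langle\boldsymbol\lambda_*,\mathbf m\rangle_\Gamma=-c\,d|\Omega_-|$ by \eqref{eq:3.A1}; thus $c=0$, $p_*=0$, and $\boldsymbol\lambda_*=\mathbf 0$.

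The hard part is to upgrade $p_n\rightharpoonup 0$ to strong convergence of $p_n$ near $\Gamma$, which then closes the argument. Fix a ball $B\supset\overline{\Omega_-}$ and set $B_+:=B\cap\Omega_+$, a bounded Lipschitz domain. On each of the bounded Lipschitz domains $\Omega_-$ and $B_+$ the Stokes equations give $\nabla p_n=2\nu\,\mathrm{div}\,\boldsymbol\varepsilon(\mathbf u_n)$, whence $\|\nabla p_n\|_{\mathbf H^{-1}}\le C\|\boldsymbol\varepsilon(\mathbf u_n)\|\to 0$, while $p_n\rightharpoonup 0$ in $L^2$ together with the compact embedding $L^2\hookrightarrow H^{-1}$ gives $p_n\to 0$ strongly in $H^{-1}$. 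Ne\v{c}as' inequality $\|q\|_{\mathcal O}\le C(\|q\|_{H^{-1}(\mathcal O)}+\|\nabla q\|_{\mathbf H^{-1}(\mathcal O)})$ on bounded Lipschitz $\mathcal O$ then yields $\|p_n\|_{\Omega_-}\to 0$ and $\|p_n\|_{B_+}\to 0$. Combining these with $\|\boldsymbol\varepsilon(\mathbf u_n)\|_{\mathbb R^3}\to 0$ in the bounds for the normal stresses of $(\mathbf u_n,p_n)$ --- the interior one being \eqref{eq:3.A4}, the exterior one the localized estimate $\|\mathbf t^+(\mathbf u_n,p_n)\|_{-1/2,\Gamma}\le C(\|\boldsymbol\varepsilon(\mathbf u_n)\|_{B_+}+\|p_n\|_{B_+})$ available from the section on normal stresses (take the ball there to be $B$) --- gives $\mathbf t^\pm(\mathbf u_n,p_n)\to\mathbf 0$ in $\mathbf H^{-1/2}(\Gamma)$, hence $\boldsymbol\lambda_n=\jump{\mathbf t(\mathbf u_n,p_n)}\to\mathbf 0$ in $\mathbf H^{-1/2}(\Gamma)$, contradicting $\|\boldsymbol\lambda_n\|_{-1/2,\Gamma}=1$. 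I expect this pressure--regularity step (Ne\v{c}as' inequality on the exterior collar $B_+$, combined with the \emph{localized} normal--stress bound there) to be the only genuinely delicate point; the rest is bookkeeping.

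For the range statement, note that $\langle\punto,\mathrm V\punto\rangle_\Gamma$ is bounded (Proposition \ref{prop:5.3}), symmetric (Proposition \ref{prop:5.5}) and, by the above, coercive on the Hilbert space $\mathbf H^{-1/2}_m(\Gamma)$. Given $\boldsymbol\psi\in\mathbf H^{1/2}_n(\Gamma)$ --- which defines a bounded functional on $\mathbf H^{-1/2}_m(\Gamma)$ through $\langle\punto,\boldsymbol\psi\rangle_\Gamma$ --- Lax--Milgram produces $\boldsymbol\lambda\in\mathbf H^{-1/2}_m(\Gamma)$ with $\langle\boldsymbol\mu,\mathrm V\boldsymbol\lambda-\boldsymbol\psi\rangle_\Gamma=0$ for all $\boldsymbol\mu\in\mathbf H^{-1/2}_m(\Gamma)$. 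Since $\mathbf H^{-1/2}_m(\Gamma)=\mathrm{span}\{\mathbf m\}^\circ$ (the remark following Proposition \ref{prop:3.2}), bipolarity gives $\mathrm V\boldsymbol\lambda-\boldsymbol\psi\in\mathrm{span}\{\mathbf m\}$; but $\mathrm V\boldsymbol\lambda-\boldsymbol\psi\in\mathbf H^{1/2}_n(\Gamma)$ as well (Proposition \ref{prop:5.5}), so the stable decomposition of Proposition \ref{prop:3.1} forces $\mathrm V\boldsymbol\lambda=\boldsymbol\psi$. Together with $\mathrm{Range}\,\mathrm V\subset\mathbf H^{1/2}_n(\Gamma)$ this gives $\mathrm{Range}\,\mathrm V=\mathbf H^{1/2}_n(\Gamma)$.
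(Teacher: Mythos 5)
Your proof is correct, but the coercivity half follows a genuinely different route from the paper. The paper argues directly: for $\boldsymbol\lambda\in\mathbf H^{-1/2}_m(\Gamma)$ the dual norm can be computed by testing only against $\boldsymbol\xi\in\mathbf H^{1/2}_n(\Gamma)$ (thanks to the stable splitting $\mathbf H^{1/2}(\Gamma)=\mathbf H^{1/2}_n(\Gamma)\oplus\mathrm{span}\{\mathbf m\}$), and each such $\boldsymbol\xi$ is lifted with the bounded right inverse of the trace on \emph{divergence-free} fields of $\mathbf W(\mathbb R^3)$ (Proposition \ref{prop:5.6}); plugging this lifting into \eqref{eq:5.2} makes the pressure term vanish and yields $\|\boldsymbol\lambda\|_{-1/2,\Gamma}\le C\|\boldsymbol\varepsilon(\mathbf u_\lambda)\|_{\mathbb R^3}$ at once, hence coercivity with a constant one can track. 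You never invoke Proposition \ref{prop:5.6}; instead you run a compactness/contradiction argument whose crux is upgrading the weak convergence of the pressures to strong $L^2$ convergence on $\Omega_-$ and on a collar $B_+$, via the compactness of $L^2\hookrightarrow H^{-1}$, the bound $\|\nabla p_n\|_{\mathbf H^{-1}}\le 2\nu\|\boldsymbol\varepsilon(\mathbf u_n)\|$, and Ne\v{c}as' inequality $\|q\|_{\mathcal O}\le C(\|q\|_{H^{-1}(\mathcal O)}+\|\nabla q\|_{\mathbf H^{-1}(\mathcal O)})$ on bounded Lipschitz domains, after which the localized traction bounds give $\boldsymbol\lambda_n\to\mathbf 0$ and the contradiction; your identification of the weak limits ($p_*=c\chi_{\Omega_-}$, $\boldsymbol\lambda_*=-c\mathbf n$, then $c=0$ from $\langle\boldsymbol\lambda_*,\mathbf m\rangle_\Gamma=0$) is also sound, and mirrors the kernel computation in Proposition \ref{prop:5.5}. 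What each approach buys: the paper's argument is shorter, quantitative (explicit constant through the norm of the solenoidal lifting), and stays entirely within the paper's toolkit; yours is non-constructive and imports Ne\v{c}as' inequality (essentially the inf-sup/pressure estimate on bounded domains, a result the paper never quotes), but it avoids the solenoidal trace lifting altogether, which some readers may find more routine. Your range argument (Lax--Milgram on $\mathbf H^{-1/2}_m(\Gamma)$, then the bipolar identity $\mathbf H^{-1/2}_m(\Gamma)=\mathrm{span}\{\mathbf m\}^\circ$ and the splitting of Proposition \ref{prop:3.1} to force $\mathrm V\boldsymbol\lambda=\boldsymbol\psi$) is essentially identical to the paper's.
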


\begin{proof}
 Recall that by Proposition \ref{prop:3.2}, the decomposition $\mathbf H^{1/2}(\Gamma)=\mathbf H^{1/2}_n(\Gamma)\oplus \mathrm{span}\,\{\mathbf m\}$ is stable. If $\boldsymbol\lambda\in \mathbf H^{-1/2}_m(\Gamma)$, this implies that
\[
\|\boldsymbol\lambda\|_{-1/2,\Gamma}=\sup_{\mathbf 0\neq \boldsymbol\xi\in \mathbf H^{1/2}_n(\Gamma)} \frac{|\langle \boldsymbol\lambda,\boldsymbol\xi\rangle_\Gamma|}{\|\boldsymbol\xi\|_{1/2,\Gamma}}.
\]
Using Proposition \ref{prop:5.6} we can define a right-inverse of the trace operator $\gamma^\dagger: \mathbf H^{1/2}_n(\Gamma) \to \mathbf V(\mathbb R^3)$. Then
\begin{eqnarray*}
|\langle \boldsymbol\lambda,\boldsymbol\xi\rangle_\Gamma| &=& |\langle \boldsymbol\lambda, \gamma \gamma^\dagger\boldsymbol\xi\rangle_\Gamma|=|a_{\mathbb R^3}(\mathbf u_\lambda,\gamma^\dagger\boldsymbol\xi)-(p_\lambda,\mathrm{div}\gamma^\dagger\boldsymbol\xi)_{\mathbb R^3}|\\
&=& |a_{\mathbb R^3}(\mathbf u_\lambda,\gamma^\dagger\boldsymbol\xi)| \le \nu C_\Gamma \|\boldsymbol\varepsilon(\mathbf u_\lambda)\|_{\mathbb R^3}\|\boldsymbol\xi\|_{1/2,\Gamma},
\end{eqnarray*}
and therefore
\[
\|\boldsymbol\lambda\|_{-1/2,\Gamma} \le \nu C_\Gamma \|\boldsymbol\varepsilon(\mathbf u_\lambda)\|_{\mathbb R^3} \qquad \forall \boldsymbol\lambda \in \mathbf H^{-1/2}_m(\Gamma), \quad \mathbf u_\lambda:=\mathrm S_u\boldsymbol\lambda.
\]
Finally from the proof of Proposition \ref{prop:5.5} it follows that
\[
\langle\boldsymbol\lambda,\mathrm V\boldsymbol\lambda\rangle_\Gamma=a_{\mathbb R^3} (\mathbf u_\lambda,\mathbf u_\lambda)\ge \frac2{\nu C_\Gamma^2}\|\boldsymbol\lambda\|_{-1/2,\Gamma}^2,
\]
which proves the coercivity of $\mathrm V$. Finally, if $\boldsymbol\xi\in \mathbf H^{1/2}_n(\Gamma)$, then we can solve the coercive problem
\[
\boldsymbol\lambda\in \mathbf H^{-1/2}_m(\Gamma) \qquad \langle \boldsymbol\mu,\mathrm V\boldsymbol\lambda\rangle_\Gamma =\langle \boldsymbol\mu,\boldsymbol\xi\rangle_\Gamma \quad \forall \boldsymbol\mu \in \mathbf H^{-1/2}_m(\Gamma)
\]
and note that by Proposition \ref{prop:3.2} this implies that $\mathrm V \boldsymbol\lambda =\boldsymbol\xi$.
\end{proof}

\section{Variational stresslet distributions}\label{sec:6.1}

In parallel with Section \ref{sec:5.1}, we restrict our attention to the three dimensional case. 
Given $\boldsymbol\varphi\in \mathbf H^{1/2}(\Gamma)$ we look for solutions of
\begin{subequations}\label{eq:6.1}
\begin{alignat}{4}
(\mathbf u_\varphi,p_\varphi)\in \mathbf W(\mathbb R^3\setminus\Gamma)\times L^2(\mathbb R^3) \\\
 -2\nu \mathrm{div}\,\boldsymbol\varepsilon(\mathbf u_\varphi)+\nabla p_\varphi = \mathbf 0 & \qquad & \mbox{in $\mathbb R^3\setminus\Gamma$},\\
 \mathrm{div}\,\mathbf u_\varphi = 0 & & \mbox{in $\mathbb R^3\setminus\Gamma$},\\
 \jump{\gamma\mathbf u_\varphi}=-\boldsymbol\varphi, & & \\
 \jump{\mathbf t(\mathbf u_\varphi,p_\varphi)}= \mathbf 0. & &
\end{alignat}
\end{subequations}

\begin{proposition}\label{prop:6.1}
Problem \eqref{eq:6.1} is equivalent to
\begin{equation}\label{eq:6.2}
\left[ \begin{array}{l}
\mathbf u_\varphi\in \mathbf W(\mathbb R^3\setminus\Gamma), p_\varphi\in L^2(\mathbb R^3),\\[1.5ex]
\jump{\gamma \mathbf u_\varphi}+\boldsymbol\varphi=\mathbf 0,\\[1.5ex]
\begin{array}{rll} \ds a_{\mathbb R^3\setminus\Gamma}(\mathbf u_\varphi,\mathbf v)-(p_\varphi,\mathrm{div}\,\mathbf v)_{\mathbb R^3} &=\mathbf 0 & \forall \mathbf v\in \mathbf W(\mathbb R^3),\\[1.5ex]
(\mathrm{div}\,\mathbf u_\varphi,q)_{\mathbb R^3\setminus\Gamma} &=0 & \forall q \in L^2(\mathbb R^3).
\end{array}
\end{array} \right.
\end{equation}
Problem \eqref{eq:6.2} is well posed.
\end{proposition}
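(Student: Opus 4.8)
The plan is to follow the template of Proposition~\ref{prop:5.1}; the only new ingredient is the non-homogeneous essential condition $\jump{\gamma\mathbf u_\varphi}=-\boldsymbol\varphi$, which I would remove by a lifting argument before invoking the abstract mixed-problem theory.

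\emph{Equivalence of \eqref{eq:6.1} and \eqref{eq:6.2}.} This is distribution theory plus the normal-stress bookkeeping of Section~3, exactly as in Proposition~\ref{prop:5.1}. Starting from a solution of \eqref{eq:6.2}: testing the second equation against arbitrary $q\in L^2(\mathbb R^3)$ gives $\mathrm{div}\,\mathbf u_\varphi=0$ in $\mathbb R^3\setminus\Gamma$; testing the first against $\mathbf v\in\boldsymbol{\mathcal D}(\Omega_-)$ and $\mathbf v\in\boldsymbol{\mathcal D}(\Omega_+)$ (extended by zero, hence in $\mathbf W(\mathbb R^3)$), and using $\boldsymbol\varepsilon(\mathbf u_\varphi):\boldsymbol\varepsilon(\mathbf v)=\boldsymbol\varepsilon(\mathbf u_\varphi):\mathrm D\mathbf v$, gives $-2\nu\,\mathrm{div}\,\boldsymbol\varepsilon(\mathbf u_\varphi)+\nabla p_\varphi=\mathbf 0$ in $\mathbb R^3\setminus\Gamma$. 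With the Stokes equation in hand, $\mathbf t^\pm(\mathbf u_\varphi,p_\varphi)$ makes sense through the formulas for $\mathbf t^\pm(\mathbf u,p)$ of Section~3; feeding an arbitrary $\mathbf v\in\mathbf H^1_0(B)$ into the first equation and regrouping the interior and exterior integrals reproduces $\langle\jump{\mathbf t(\mathbf u_\varphi,p_\varphi)},\gamma\mathbf v\rangle_\Gamma$, so this quantity is zero, and surjectivity of $\gamma:\mathbf H^1_0(B)\to\mathbf H^{1/2}(\Gamma)$ yields $\jump{\mathbf t(\mathbf u_\varphi,p_\varphi)}=\mathbf 0$. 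The converse runs the same identities in reverse, extending from compactly supported test fields to all of $\mathbf W(\mathbb R^3)$ by density of $\boldsymbol{\mathcal D}(\mathbb R^3)$ (Proposition~\ref{prop:2.1}) and continuity in $\|\cdot\|_{1,\rho,\mathbb R^3}$ of both $\mathbf v\mapsto a_{\mathbb R^3\setminus\Gamma}(\mathbf u_\varphi,\mathbf v)$ and $\mathbf v\mapsto(p_\varphi,\mathrm{div}\,\mathbf v)_{\mathbb R^3}$; the former is legitimate because $\boldsymbol\varepsilon(\mathbf u_\varphi)\in\mathbf L^2(\mathbb R^3\setminus\Gamma)$ (Proposition~\ref{prop:2.8} on $\Omega_+$, trivial on $\Omega_-$).

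\emph{Well-posedness of \eqref{eq:6.2}.} I would fix a bounded lift $\mathbf u_0\in\mathbf W(\mathbb R^3\setminus\Gamma)$ with $\jump{\gamma\mathbf u_0}=-\boldsymbol\varphi$ — for instance, take $\mathbf u_0$ to be a bounded $\mathbf H^1(\Omega_-)$-extension of $-\boldsymbol\varphi$ and $\mathbf u_0\equiv\mathbf 0$ in $\Omega_+$ — and set $\mathbf u_\varphi=\mathbf u_0+\mathbf w$ with $\mathbf w\in\mathbf W(\mathbb R^3)$ (forced by $\jump{\gamma\mathbf w}=\mathbf 0$). Since $\mathbf w$ carries no jump, $a_{\mathbb R^3\setminus\Gamma}(\mathbf w,\mathbf v)=a_{\mathbb R^3}(\mathbf w,\mathbf v)$ and $(\mathrm{div}\,\mathbf w,q)_{\mathbb R^3\setminus\Gamma}=(\mathrm{div}\,\mathbf w,q)_{\mathbb R^3}$, so \eqref{eq:6.2} becomes the standard mixed problem: find $(\mathbf w,p_\varphi)\in\mathbf W(\mathbb R^3)\times L^2(\mathbb R^3)$ with $a_{\mathbb R^3}(\mathbf w,\mathbf v)-(p_\varphi,\mathrm{div}\,\mathbf v)_{\mathbb R^3}=-a_{\mathbb R^3\setminus\Gamma}(\mathbf u_0,\mathbf v)$ for all $\mathbf v\in\mathbf W(\mathbb R^3)$ and $(\mathrm{div}\,\mathbf w,q)_{\mathbb R^3}=-(\mathrm{div}\,\mathbf u_0,q)_{\mathbb R^3\setminus\Gamma}$ for all $q\in L^2(\mathbb R^3)$. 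The two right-hand sides are bounded linear functionals, so the Brezzi hypotheses are exactly those checked in Proposition~\ref{prop:5.1}: coercivity of $a_{\mathbb R^3}$ on $\mathbf W(\mathbb R^3)$ (by \eqref{eq:2.1}, Proposition~\ref{prop:2.1} and Corollary~\ref{cor:2.2}) and the inf--sup condition coming from surjectivity of $\mathrm{div}:\mathbf W(\mathbb R^3)\to L^2(\mathbb R^3)$ (Proposition~\ref{cor:2.5}). This produces a unique $(\mathbf w,p_\varphi)$, with norm controlled by $\|\mathbf u_0\|_{1,\rho,\mathbb R^3\setminus\Gamma}\le C\|\boldsymbol\varphi\|_{1/2,\Gamma}$, hence a unique $(\mathbf u_\varphi,p_\varphi)\in\mathbf W(\mathbb R^3\setminus\Gamma)\times L^2(\mathbb R^3)$ depending continuously on $\boldsymbol\varphi$; any other admissible lift differs from $\mathbf u_0$ by an element of $\mathbf W(\mathbb R^3)$, which is absorbed into $\mathbf w$ and killed by uniqueness, so $\mathbf u_\varphi$ is independent of the choice.

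The step I expect to need the most care is not a single estimate but the signed bookkeeping in the equivalence: checking that the broken form $a_{\mathbb R^3\setminus\Gamma}(\mathbf u_\varphi,\mathbf v)-(p_\varphi,\mathrm{div}\,\mathbf v)_{\mathbb R^3}$, tested against $\mathbf v\in\mathbf H^1_0(B)$, reassembles with the correct signs into $\langle\mathbf t^-(\mathbf u_\varphi,p_\varphi)-\mathbf t^+(\mathbf u_\varphi,p_\varphi),\gamma\mathbf v\rangle_\Gamma$, together with the continuity/density details that make the converse implication rigorous; once the problem has been reduced to the unbroken mixed formulation, everything else is routine.
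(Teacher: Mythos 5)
Your argument is correct and follows essentially the same route as the paper: the equivalence is the same distribution-theory/normal-stress bookkeeping, and the well-posedness is obtained exactly as in the paper by lifting the jump condition (the paper takes a lift $L\boldsymbol\varphi$ with $\jump{\gamma L\boldsymbol\varphi}=-\boldsymbol\varphi$ and rewrites the problem for $\mathbf u_\varphi-L\boldsymbol\varphi\in\mathbf W(\mathbb R^3)$), reducing to the same coercive-plus-inf-sup mixed structure already verified in Proposition~\ref{prop:5.1}. Your version merely spells out the details the paper leaves implicit (explicit choice of lift, boundedness of the right-hand sides, independence of the lift), so there is nothing to flag.
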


\begin{proof} Equivalence of \eqref{eq:6.1} and \eqref{eq:6.2}  is a simple application of distribution theory. 
Taking $L\boldsymbol\varphi\in \mathbf W(\mathbb R^3)$ such that $\jump{\gamma L\boldsymbol\varphi}=-\boldsymbol\varphi$, we can rewrite the problem in terms of the variable $\mathbf u_\varphi-L\boldsymbol\varphi\in \mathbf W(\mathbb R^3)$. The resulting problem is associated to the same bilinear form as problem \eqref{eq:5.2} and is therefore well posed.
\end{proof}

\paragraph{Definition.} To $\boldsymbol\varphi\in \mathbf H^{1/2}(\Gamma)$ we associate $(\mathbf u_\varphi,p_\varphi)$ the solution of \eqref{eq:6.1} and then define the {\bf  double layer potential}
\[
\mathrm D_u\boldsymbol\varphi := \mathbf u_\varphi, \qquad \mathrm D_p\boldsymbol\varphi:= p_\varphi
\]
and the associated operators
\[
\mathrm W\boldsymbol\varphi :=-\mathbf t^\pm(\mathbf u_\varphi,p_\varphi), \qquad \mathrm K\boldsymbol\varphi:=\ave{\gamma\mathbf u_\varphi}=\smallfrac12\big( \gamma^+\mathbf u_\varphi+ \gamma^-\mathbf u_\varphi\big),
\]
respectively named hypersingular operator and double layer operator.

\begin{proposition}[Mapping properties]\label{prop:6.3}
The following operators are bounded:
\begin{eqnarray*}
(\mathrm D_u,\mathrm D_p) &:& \mathbf H^{1/2}(\Gamma) \to \mathbf W(\mathbb R^3\setminus\Gamma)\times L^2(\mathbb R^3),\\
\mathrm W &:& \mathbf H^{1/2}(\Gamma) \to \mathbf H^{-1/2}(\Gamma),\\
\mathrm K &:& \mathbf H^{1/2}(\Gamma) \to \mathbf H^{1/2}(\Gamma).
\end{eqnarray*}
\end{proposition}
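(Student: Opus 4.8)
The plan is to repeat, almost verbatim, the argument behind Proposition~\ref{prop:5.3}: all three bounds are immediate consequences of the well posedness of \eqref{eq:6.2} together with the definitions of the operators and the normal-stress estimate \eqref{eq:3.A4}. First I would fix once and for all a bounded lifting $L:\mathbf H^{1/2}(\Gamma)\to\mathbf W(\mathbb R^3\setminus\Gamma)$ with $\jump{\gamma L\boldsymbol\varphi}=-\boldsymbol\varphi$; for instance, apply a bounded right inverse of $\gamma^-$ to $-\boldsymbol\varphi$, cut off with a fixed $\varphi_0\in\mathcal D(\mathbb R^3)$ equal to $1$ near $\overline{\Omega_-}$, and extend by zero into $\Omega_+$, which yields $\|L\boldsymbol\varphi\|_{1,\rho,\mathbb R^3\setminus\Gamma}\le C\|\boldsymbol\varphi\|_{1/2,\Gamma}$. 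As in the proof of Proposition~\ref{prop:6.1}, the pair $(\mathbf u_\varphi-L\boldsymbol\varphi,\,p_\varphi)$ solves a mixed problem governed by the same bilinear form as \eqref{eq:5.2}, but now with right-hand sides $-a_{\mathbb R^3\setminus\Gamma}(L\boldsymbol\varphi,\cdot)$ and $-(\mathrm{div}\,L\boldsymbol\varphi,\cdot)_{\mathbb R^3\setminus\Gamma}$ that depend linearly and continuously on $L\boldsymbol\varphi$. The Brezzi theory then gives
\[
\|\mathbf u_\varphi-L\boldsymbol\varphi\|_{1,\rho,\mathbb R^3}+\|p_\varphi\|_{\mathbb R^3}\le C\,\|L\boldsymbol\varphi\|_{1,\rho,\mathbb R^3\setminus\Gamma}\le C'\,\|\boldsymbol\varphi\|_{1/2,\Gamma},
\]
and a triangle inequality with the boundedness of $L$ upgrades this to
\[
\|\mathbf u_\varphi\|_{1,\rho,\mathbb R^3\setminus\Gamma}+\|p_\varphi\|_{\mathbb R^3}\le C''\,\|\boldsymbol\varphi\|_{1/2,\Gamma},
\]
which is exactly the boundedness of $(\mathrm D_u,\mathrm D_p)$.

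Given this a~priori estimate, the two boundary operators come for free. For $\mathrm K\boldsymbol\varphi=\ave{\gamma\mathbf u_\varphi}$ I would invoke the continuity of the interior and exterior trace operators $\gamma^-:\mathbf H^1(\Omega_-)\to\mathbf H^{1/2}(\Gamma)$ and $\gamma^+:\mathbf W(\Omega_+)\to\mathbf H^{1/2}(\Gamma)$, noting that on the bounded piece the weighted and unweighted $H^1$ norms are equivalent, so that $\|\mathrm K\boldsymbol\varphi\|_{1/2,\Gamma}\le C\|\mathbf u_\varphi\|_{1,\rho,\mathbb R^3\setminus\Gamma}\le C\|\boldsymbol\varphi\|_{1/2,\Gamma}$. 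For $\mathrm W$ I would first observe that the definition $\mathrm W\boldsymbol\varphi:=-\mathbf t^\pm(\mathbf u_\varphi,p_\varphi)$ is unambiguous because $\jump{\mathbf t(\mathbf u_\varphi,p_\varphi)}=\mathbf 0$, and then apply \eqref{eq:3.A4} (whose hypotheses hold since $\mathrm{div}\,\boldsymbol\sigma^{\mathrm S}(\mathbf u_\varphi,p_\varphi)=\mathbf 0$ in $\mathbb R^3\setminus\Gamma$) to get $\|\mathrm W\boldsymbol\varphi\|_{-1/2,\Gamma}\le C_\Gamma\big(\|\boldsymbol\varepsilon(\mathbf u_\varphi)\|_{\Omega_\pm}+\|p_\varphi\|_{\Omega_\pm}\big)\le C\|\boldsymbol\varphi\|_{1/2,\Gamma}$.

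I do not expect a genuine obstacle here: once \eqref{eq:6.2} is known to be well posed, the proof is pure bookkeeping with continuous operators and it parallels Proposition~\ref{prop:5.3} line for line. The only point deserving a little attention is that the reduction $\mathbf u_\varphi\mapsto\mathbf u_\varphi-L\boldsymbol\varphi$ must not spoil the $L^2$-pressure estimate — but this is precisely the reduction already carried out in Proposition~\ref{prop:6.1}, so nothing new is needed. A maximally terse write-up could therefore simply state, as for Proposition~\ref{prop:5.3}, that the claim \emph{is a direct consequence of the well posedness of \eqref{eq:6.2}, the definitions of the operators and \eqref{eq:3.A4}}.
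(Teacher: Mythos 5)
Your argument is correct and is exactly the paper's proof, merely with the bookkeeping written out: the paper disposes of Proposition~\ref{prop:6.3} with the one line you anticipated, namely that it is a direct consequence of the well posedness of \eqref{eq:6.2} (via the lifting already used in Proposition~\ref{prop:6.1}), the definitions of the operators, and \eqref{eq:3.A4}. No gaps; the lifting, Brezzi estimate, trace continuity, and normal-stress bound are precisely the implicit ingredients.
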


\begin{proof} It is a direct consequence of the well posedness of \eqref{eq:6.2}, the definitions of the operators and \eqref{eq:3.A4}.
\end{proof}

\begin{proposition}[Jump relations]\label{prop:6.4}
Let $\boldsymbol\varphi\in \mathbf H^{1/2}(\Gamma)$ and let $\mathbf u_\varphi:=\mathrm D_u\boldsymbol\varphi$, $p_\varphi:=\mathrm D_p\boldsymbol\varphi$. Then
\[
\jump{\gamma\mathbf u_\varphi}=-\boldsymbol\varphi, \qquad 
\jump{\mathbf t(\mathbf u_\varphi,p_\varphi)}=\mathbf 0, \qquad 
\gamma^\pm \mathbf u_\varphi=\pm \smallfrac12 \boldsymbol\varphi +\mathrm K\boldsymbol\varphi.
\]
\end{proposition}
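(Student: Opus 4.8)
The plan is to read off the first two identities directly from the defining problem of the double layer potential, and then to recover the one-sided traces by the same average/jump bookkeeping used in the proof of Proposition~\ref{prop:5.4}.

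By construction, $(\mathbf u_\varphi,p_\varphi)=(\mathrm D_u\boldsymbol\varphi,\mathrm D_p\boldsymbol\varphi)$ is the solution of \eqref{eq:6.1}, and the last two conditions in \eqref{eq:6.1} are literally the assertions $\jump{\gamma\mathbf u_\varphi}=-\boldsymbol\varphi$ and $\jump{\mathbf t(\mathbf u_\varphi,p_\varphi)}=\mathbf 0$. Before using them one should note that every object appearing is well defined: the one-sided traces $\gamma^\pm\mathbf u_\varphi$ make sense because $\mathbf u_\varphi\in\mathbf W(\mathbb R^3\setminus\Gamma)$ by the mapping properties of Proposition~\ref{prop:6.3}, and the one-sided normal stresses $\mathbf t^\pm(\mathbf u_\varphi,p_\varphi)$ make sense because $(\mathbf u_\varphi,p_\varphi)$ solves the Stokes system on each side, so the normal-stress framework of Review~\#1 applies, together with the bound \eqref{eq:3.A4}.

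For the third identity, the definition of the double layer operator gives $\mathrm K\boldsymbol\varphi=\ave{\gamma\mathbf u_\varphi}=\smallfrac12(\gamma^+\mathbf u_\varphi+\gamma^-\mathbf u_\varphi)$, while the jump relation just established gives $\smallfrac12\jump{\gamma\mathbf u_\varphi}=\smallfrac12(\gamma^-\mathbf u_\varphi-\gamma^+\mathbf u_\varphi)=-\smallfrac12\boldsymbol\varphi$. Adding these two equalities yields $\gamma^-\mathbf u_\varphi=\mathrm K\boldsymbol\varphi-\smallfrac12\boldsymbol\varphi$ and subtracting them yields $\gamma^+\mathbf u_\varphi=\mathrm K\boldsymbol\varphi+\smallfrac12\boldsymbol\varphi$; together these are exactly $\gamma^\pm\mathbf u_\varphi=\pm\smallfrac12\boldsymbol\varphi+\mathrm K\boldsymbol\varphi$.

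There is no genuine obstacle in this proof: the statement is a transcription of the defining conditions of $\mathrm D_u$, $\mathrm D_p$ and of the operator $\mathrm K$, followed by two lines of linear algebra. All the analytic work --- that such jumps and averages are meaningful and that the potential exists and is unique --- has already been carried out in Propositions~\ref{prop:6.1} and~\ref{prop:6.3}, exactly as in the single layer case treated in Proposition~\ref{prop:5.4}.
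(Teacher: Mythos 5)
Your proof is correct and follows the same route as the paper, which simply notes that the statement is a straightforward consequence of the definitions; you have just spelled out the add/subtract step for the one-sided traces, mirroring the argument of Proposition~\ref{prop:5.4}. Nothing further is needed.
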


\begin{proof} It is a straightforward consequence of the definitions.
\end{proof}

\paragraph{A kernel.} For $\mathbf c\in \mathcal M$, the fields $\mathbf u=-\chi_{\Omega_-}\mathbf c$ and $p=0$ solve \eqref{eq:6.1} with $\boldsymbol\varphi=\mathbf c\in \mathcal M_\Gamma=\gamma \mathcal M$. This leads to the identities:
\begin{equation}\label{eq:6.10}
\mathrm D_u\mathbf c=-\chi_{\Omega_-}\mathbf c , \qquad\mathrm D_p \mathbf c=0,\qquad \mathrm K\mathbf c=-\smallfrac12\mathbf c, \qquad \mathrm W\mathbf c=\mathbf 0, \qquad \forall \mathbf c\in \mathcal M_\Gamma.
\end{equation}

\begin{proposition}[Symmetry of $\mathrm W$] \label{prop:6.5}
For all $\boldsymbol\varphi,\boldsymbol\psi\in \mathbf H^{1/2}(\Gamma)$,
\begin{equation}\label{eq:6.3}
\langle \mathrm W\boldsymbol\varphi,\boldsymbol\psi\rangle_\Gamma = \langle \mathrm W\boldsymbol\psi,\boldsymbol\varphi\rangle_\Gamma, \qquad \langle  \mathrm W\boldsymbol\varphi, \boldsymbol\varphi\rangle_\Gamma\ge 0.
\end{equation}
Also
\[
\mathrm{Ker}\,\mathrm W =\mathcal M_\Gamma \qquad \mbox{and} \qquad \mathrm{Range}\,\mathrm W \subset \mathcal M_\Gamma^\circ.
\]
\end{proposition}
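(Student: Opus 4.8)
The plan is to follow the template of Proposition~\ref{prop:5.5}: reduce the bilinear form $\langle\mathrm W\cdot,\cdot\rangle_\Gamma$ to the energy form $a_{\mathbb R^3\setminus\Gamma}$ evaluated on the double layer velocities, from which symmetry and positivity are immediate, and then read off the kernel and range. Concretely, associate $\boldsymbol\varphi\mapsto(\mathbf u_\varphi,p_\varphi)$ and $\boldsymbol\psi\mapsto(\mathbf u_\psi,p_\psi)$, the solutions of \eqref{eq:6.1}. Since $\jump{\mathbf t(\mathbf u_\varphi,p_\varphi)}=\mathbf 0$, the interior and exterior normal stresses agree, so $\mathrm W\boldsymbol\varphi=-\mathbf t^+(\mathbf u_\varphi,p_\varphi)=-\mathbf t^-(\mathbf u_\varphi,p_\varphi)$; and by Proposition~\ref{prop:6.4}, $\boldsymbol\psi=-\jump{\gamma\mathbf u_\psi}=\gamma^+\mathbf u_\psi-\gamma^-\mathbf u_\psi$. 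Testing the definition of $\mathbf t^+(\mathbf u_\varphi,p_\varphi)$ against $\mathbf v=\mathbf u_\psi|_{\Omega_+}\in\mathbf W(\Omega_+)$ and the definition of $\mathbf t^-(\mathbf u_\varphi,p_\varphi)$ against $\mathbf v=\mathbf u_\psi|_{\Omega_-}\in\mathbf H^1(\Omega_-)$, and using $\mathrm{div}\,\mathbf u_\psi=0$ to discard the pressure terms, one finds
\[
\langle\mathrm W\boldsymbol\varphi,\boldsymbol\psi\rangle_\Gamma
=a_{\Omega_+}(\mathbf u_\varphi,\mathbf u_\psi)+a_{\Omega_-}(\mathbf u_\varphi,\mathbf u_\psi)
=a_{\mathbb R^3\setminus\Gamma}(\mathbf u_\varphi,\mathbf u_\psi).
\]
The right-hand side is symmetric in $(\boldsymbol\varphi,\boldsymbol\psi)$, which gives the first identity in \eqref{eq:6.3}, and taking $\boldsymbol\psi=\boldsymbol\varphi$ gives $\langle\mathrm W\boldsymbol\varphi,\boldsymbol\varphi\rangle_\Gamma=2\nu\|\boldsymbol\varepsilon(\mathbf u_\varphi)\|_{\mathbb R^3\setminus\Gamma}^2\ge0$.

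For the kernel, the inclusion $\mathcal M_\Gamma\subseteq\mathrm{Ker}\,\mathrm W$ is exactly \eqref{eq:6.10}. Conversely, if $\mathrm W\boldsymbol\varphi=\mathbf 0$ then the positivity identity gives $\boldsymbol\varepsilon(\mathbf u_\varphi)=0$ in $\mathbb R^3\setminus\Gamma$. Lemma~\ref{lemma:rigid}(a), applied on the connected bounded set $\Omega_-$, yields $\mathbf u_\varphi|_{\Omega_-}\in\mathcal M$, while Lemma~\ref{prop:8.4b} together with \eqref{eq:2.30} (here we use $d=3$) forces $\mathbf u_\varphi|_{\Omega_+}=\mathbf 0$. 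The jump relation $\jump{\gamma\mathbf u_\varphi}=-\boldsymbol\varphi$ then gives $\boldsymbol\varphi=-\gamma^-\mathbf u_\varphi\in\gamma\mathcal M=\mathcal M_\Gamma$. Finally, for the range, symmetry plus $\mathrm W\mathbf c=\mathbf 0$ for every $\mathbf c\in\mathcal M_\Gamma$ gives $\langle\mathrm W\boldsymbol\varphi,\mathbf c\rangle_\Gamma=\langle\mathrm W\mathbf c,\boldsymbol\varphi\rangle_\Gamma=0$, i.e.\ $\mathrm W\boldsymbol\varphi\in\mathcal M_\Gamma^\circ$.

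I expect the only delicate point to be the bookkeeping in the first display: correctly tracking the sign conventions hidden in $\jump{\cdot}$ and in the opposite orientations of $\mathbf t^+$ and $\mathbf t^-$, making sure each test field is admissible in the relevant space ($\mathbf H^1(\Omega_-)$ on the inside, the weighted space $\mathbf W(\Omega_+)$ on the outside, which is precisely why the exterior test space was enlarged in Review~\#1), and invoking $\mathrm{div}\,\mathbf u_\psi=0$ to eliminate the pressure. Everything else is either a direct appeal to \eqref{eq:6.10}, Lemma~\ref{lemma:rigid}, Lemma~\ref{prop:8.4b}, and \eqref{eq:2.30}, or formal manipulation; the three-dimensional restriction enters only through $\mathcal M\cap\mathbf W(\Omega_+)=\{\mathbf 0\}$ in the kernel argument.
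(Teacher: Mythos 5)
Your proposal is correct and follows essentially the same route as the paper: the key identity $\langle\mathrm W\boldsymbol\varphi,\boldsymbol\psi\rangle_\Gamma=a_{\mathbb R^3\setminus\Gamma}(\mathbf u_\varphi,\mathbf u_\psi)$ with the same sign bookkeeping, and the same kernel argument via Lemma~\ref{lemma:rigid}, Lemma~\ref{prop:8.4b} and \eqref{eq:2.30}. The only (harmless) deviation is at the range inclusion, where you use symmetry together with $\mathrm W\mathbf c=\mathbf 0$, whereas the paper tests the definition of $\mathbf t^-(\mathbf u_\varphi,p_\varphi)$ directly against a rigid motion $\mathbf c\in\mathcal M\subset\mathbf H^1(\Omega_-)$; both are valid.
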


\begin{proof}
Let $(\mathbf u_\varphi,p_\varphi):=(\mathrm D_u\boldsymbol\varphi,\mathrm D_p\boldsymbol\varphi)$ and $\mathbf u_\psi:=\mathrm D_u\boldsymbol\psi$. Then
\begin{eqnarray*}
\langle\mathrm W\boldsymbol\varphi,\boldsymbol\psi\rangle_\Gamma &=& \langle \mathbf t(\mathbf u_\varphi,p_\varphi),\jump{\gamma\mathbf u_\psi}\rangle_\Gamma =\langle \mathbf t^-(\mathbf u_\varphi,p_\varphi),\gamma^-\mathbf u_\psi\rangle_\Gamma-\langle \mathbf t^+(\mathbf u_\varphi,p_\varphi),\gamma^+\mathbf u_\psi\rangle_\Gamma\\
&=& a_{\mathbb R^3\setminus\Gamma}(\mathbf u_\varphi,\mathbf u_\psi).
\end{eqnarray*} 
This equality implies symmetry and positive semi-definiteness. 

We already know from \eqref{eq:6.10} that $\mathcal M_\Gamma\subset \mathrm{Ker}\,\mathrm W$.
If $\mathrm W\boldsymbol\varphi=\mathbf 0$, then, by the previous identity, it follows that $\boldsymbol\varepsilon(\mathbf u_\varphi)=0$ in $\mathbb R^3\setminus\Gamma$. By Lemma \ref{lemma:rigid}, it follows that $\mathbf u_\varphi|_{\Omega_-}\in \mathcal M$ and $\mathbf u|_{\Omega_+}\equiv \mathbf 0$. Therefore $\boldsymbol\varphi=-\jump{\gamma\mathbf u_\varphi}=-\gamma^-\mathbf u\in \mathcal M_\Gamma$. 

Finally, if $\boldsymbol\varphi\in \mathbf H^{1/2}(\Gamma)$ and $\mathbf c\in \mathcal M$, then defining $(\mathbf u_\varphi,p_\varphi)$ as usual,
\[
\langle\mathrm W\boldsymbol\varphi,\mathbf c\rangle_\Gamma=-\langle \mathbf t^-(\mathbf u_\varphi,p_\varphi),\gamma \mathbf c\rangle_\Gamma=-2\nu (\boldsymbol\varepsilon(\mathbf u_\varphi),\boldsymbol\varepsilon(\mathbf c))_{\Omega_-}+(p_\varphi,\mathrm{div}\mathbf c)_{\Omega_-}=0,
\]
which proves that $ \mathrm W \boldsymbol\varphi\in \mathcal M_\Gamma^\circ$.
\end{proof}

\begin{proposition}[Coercivity of $\mathrm W$]\label{prop:6.6}
There exists $C_\Gamma>0$ such that
\[
\langle \mathrm W\boldsymbol\varphi,\boldsymbol\varphi\rangle_\Gamma \ge C_\Gamma\|\boldsymbol\varphi\|_{1/2,\Gamma}^2 \qquad \forall \boldsymbol\varphi\in \mathbf H^{1/2}_{\mathcal M}(\Gamma):=\{\boldsymbol\xi\,:\,\int_\Gamma \boldsymbol\xi\cdot\mathbf c=0\quad\forall \mathbf c\in \mathcal M_\Gamma\}.
\]
Therefore $\mathrm{Range}\,\mathrm W=\mathcal M_\Gamma^\circ$.
\end{proposition}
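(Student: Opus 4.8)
The plan is to mirror exactly the argument used for the coercivity of $\mathrm V$ in Proposition \ref{prop:5.7}, replacing the roles of the single layer data/traces by those of the double layer. The key structural fact is the identity already established in the proof of Proposition \ref{prop:6.5}: for $\boldsymbol\varphi\in \mathbf H^{1/2}(\Gamma)$ and $(\mathbf u_\varphi,p_\varphi)=(\mathrm D_u\boldsymbol\varphi,\mathrm D_p\boldsymbol\varphi)$ one has $\langle \mathrm W\boldsymbol\varphi,\boldsymbol\varphi\rangle_\Gamma=a_{\mathbb R^3\setminus\Gamma}(\mathbf u_\varphi,\mathbf u_\varphi)=2\nu\|\boldsymbol\varepsilon(\mathbf u_\varphi)\|_{\mathbb R^3\setminus\Gamma}^2$. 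So the whole issue is to bound $\|\boldsymbol\varphi\|_{1/2,\Gamma}$ from above by $\|\boldsymbol\varepsilon(\mathbf u_\varphi)\|_{\mathbb R^3\setminus\Gamma}$, uniformly over $\boldsymbol\varphi\in \mathbf H^{1/2}_{\mathcal M}(\Gamma)$.

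First I would observe that, since $\jump{\gamma\mathbf u_\varphi}=-\boldsymbol\varphi$, we have $\|\boldsymbol\varphi\|_{1/2,\Gamma}=\|\jump{\gamma\mathbf u_\varphi}\|_{1/2,\Gamma}\le \|\gamma^-\mathbf u_\varphi\|_{1/2,\Gamma}+\|\gamma^+\mathbf u_\varphi\|_{1/2,\Gamma}$, and by continuity of the interior and exterior trace operators from $\mathbf H^1(\Omega_-)$ and $\mathbf W(\Omega_+)$ into $\mathbf H^{1/2}(\Gamma)$ this is controlled by $\|\mathbf u_\varphi\|_{1,\Omega_-}+\|\mathbf u_\varphi\|_{1,\rho,\Omega_+}$. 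Now I invoke Proposition \ref{prop:2.8}: in three dimensions $\|\boldsymbol\varepsilon(\punto)\|_{\Omega_+}$ is an equivalent norm on $\mathbf W(\Omega_+)$, so $\|\mathbf u_\varphi\|_{1,\rho,\Omega_+}\le C\|\boldsymbol\varepsilon(\mathbf u_\varphi)\|_{\Omega_+}$. For the interior piece I would use Korn's inequality (Lemma \ref{lemma:rigid}(c)) with a functional $\boldsymbol\jmath:\mathbf H^1(\Omega_-)\to\mathbb R^6$ chosen so that $\boldsymbol\jmath$ vanishes on $\mathbf u_\varphi|_{\Omega_-}$ precisely because $\boldsymbol\varphi\in \mathbf H^{1/2}_{\mathcal M}(\Gamma)$; concretely, take $\boldsymbol\jmath(\mathbf w)$ to be the six moments $\int_\Gamma \gamma\mathbf w\cdot\mathbf c$ as $\mathbf c$ ranges over a basis of $\mathcal M_\Gamma$. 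Since $\gamma^-\mathbf u_\varphi = \gamma^+\mathbf u_\varphi+\boldsymbol\varphi$ and $\int_\Gamma \gamma^+\mathbf u_\varphi\cdot\mathbf c = 2\nu(\boldsymbol\varepsilon(\mathbf u_\varphi),\boldsymbol\varepsilon(\mathbf c))_{\Omega_+}=0$ for $\mathbf c\in\mathcal M$ (integration by parts in the exterior domain, using $\mathrm{div}\,\boldsymbol\sigma^{\mathrm S}=\mathbf 0$ and $\mathbf t^+=\mathbf 0$), while $\int_\Gamma\boldsymbol\varphi\cdot\mathbf c=0$ by hypothesis, I get $\boldsymbol\jmath(\mathbf u_\varphi|_{\Omega_-})=\mathbf 0$, hence $\|\mathbf u_\varphi\|_{1,\Omega_-}\le C\|\boldsymbol\varepsilon(\mathbf u_\varphi)\|_{\Omega_-}$. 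Combining, $\|\boldsymbol\varphi\|_{1/2,\Gamma}\le C\|\boldsymbol\varepsilon(\mathbf u_\varphi)\|_{\mathbb R^3\setminus\Gamma}=C(2\nu)^{-1/2}\langle\mathrm W\boldsymbol\varphi,\boldsymbol\varphi\rangle_\Gamma^{1/2}$, which squares to the claimed coercivity estimate.

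For the surjectivity statement, I proceed as in Proposition \ref{prop:5.7}: given $\boldsymbol\psi\in \mathcal M_\Gamma^\circ$, the bilinear form $(\boldsymbol\varphi,\boldsymbol\chi)\mapsto\langle\mathrm W\boldsymbol\varphi,\boldsymbol\chi\rangle_\Gamma$ is, by the coercivity just proved together with boundedness from Proposition \ref{prop:6.3}, coercive and continuous on $\mathbf H^{1/2}_{\mathcal M}(\Gamma)$, so by Lax--Milgram there is a unique $\boldsymbol\varphi\in \mathbf H^{1/2}_{\mathcal M}(\Gamma)$ with $\langle\mathrm W\boldsymbol\varphi,\boldsymbol\chi\rangle_\Gamma=\langle\boldsymbol\psi,\boldsymbol\chi\rangle_\Gamma$ for all $\boldsymbol\chi\in \mathbf H^{1/2}_{\mathcal M}(\Gamma)$. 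Because $\mathbf H^{1/2}(\Gamma)=\mathbf H^{1/2}_{\mathcal M}(\Gamma)\oplus\mathcal M_\Gamma$ (a stable decomposition, since $\mathcal M_\Gamma$ is finite dimensional and the moment functionals are nondegenerate on it by Lemma \ref{lemma:rigid}(c)) and since both $\mathrm W\boldsymbol\varphi$ and $\boldsymbol\psi$ annihilate $\mathcal M_\Gamma$ (Proposition \ref{prop:6.5} and the hypothesis on $\boldsymbol\psi$), the identity extends to all test functions in $\mathbf H^{1/2}(\Gamma)$, giving $\mathrm W\boldsymbol\varphi=\boldsymbol\psi$. Together with $\mathrm{Range}\,\mathrm W\subset\mathcal M_\Gamma^\circ$ from Proposition \ref{prop:6.5}, this yields $\mathrm{Range}\,\mathrm W=\mathcal M_\Gamma^\circ$.

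The main obstacle is the interior estimate $\|\mathbf u_\varphi\|_{1,\Omega_-}\le C\|\boldsymbol\varepsilon(\mathbf u_\varphi)\|_{\Omega_-}$: a bare Korn inequality only controls the full $\mathbf H^1$ norm modulo rigid motions, so one genuinely needs to pin down the six rigid-motion degrees of freedom. The point I want to get right is that the constraint $\boldsymbol\varphi\in \mathbf H^{1/2}_{\mathcal M}(\Gamma)$, combined with the exterior-side vanishing of the moments $\int_\Gamma\gamma^+\mathbf u_\varphi\cdot\mathbf c$ (which is where the homogeneity of the Stokes equations and $\mathbf t^+(\mathbf u_\varphi,p_\varphi)=\mathbf 0$ enter), forces the interior trace $\gamma^-\mathbf u_\varphi$ to be $\mathcal M_\Gamma$-orthogonal, so that Lemma \ref{lemma:rigid}(c) applies with exactly the right functional. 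Everything else is continuity of traces and the equivalence of norms already recorded in Section 2.
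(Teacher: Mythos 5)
Your overall architecture (reduce to $\langle \mathrm W\boldsymbol\varphi,\boldsymbol\varphi\rangle_\Gamma=2\nu\|\boldsymbol\varepsilon(\mathbf u_\varphi)\|_{\mathbb R^3\setminus\Gamma}^2$, control the exterior piece by Proposition \ref{prop:2.8}, the interior piece by Korn with a functional pinning the rigid motions, and then Lax--Milgram on $\mathbf H^{1/2}_{\mathcal M}(\Gamma)$ plus the decomposition $\mathbf H^{1/2}(\Gamma)=\mathbf H^{1/2}_{\mathcal M}(\Gamma)\oplus\mathcal M_\Gamma$ for the range) is sound, and the surjectivity part matches the paper. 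However, the step you yourself single out as the crux is wrong as written. You claim $\int_\Gamma\gamma^+\mathbf u_\varphi\cdot\mathbf c=2\nu(\boldsymbol\varepsilon(\mathbf u_\varphi),\boldsymbol\varepsilon(\mathbf c))_{\Omega_+}=0$ ``using $\mathbf t^+=\mathbf 0$''. First, $\mathbf t^+(\mathbf u_\varphi,p_\varphi)$ is not zero: for the double layer potential only the \emph{jump} of the normal stress vanishes, and by definition $\mathbf t^\pm(\mathbf u_\varphi,p_\varphi)=-\mathrm W\boldsymbol\varphi$; if $\mathbf t^+$ vanished identically, $\mathrm W$ would be the zero operator and there would be nothing to prove. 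Second, the exterior Green identity pairs the \emph{stress} with the \emph{trace of the test field}: what it would yield (after justifying the use of $\mathbf c$ as a test field, which is not legitimate as stated since $\mathcal M\cap\mathbf W(\Omega_+)=\{\mathbf 0\}$ when $d=3$, see \eqref{eq:2.30}) is $\langle\mathbf t^+(\mathbf u_\varphi,p_\varphi),\gamma\mathbf c\rangle_\Gamma=0$, i.e.\ $\langle\mathrm W\boldsymbol\varphi,\mathbf c\rangle_\Gamma=0$, which is the already known inclusion $\mathrm{Range}\,\mathrm W\subset\mathcal M_\Gamma^\circ$ and gives no information whatsoever about $\int_\Gamma\gamma^+\mathbf u_\varphi\cdot\mathbf c$. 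There is no reason for the moments $\int_\Gamma\gamma^\pm\mathbf u_\varphi\cdot\mathbf c$ to vanish when $\boldsymbol\varphi\in\mathbf H^{1/2}_{\mathcal M}(\Gamma)$, so your assertion $\boldsymbol\jmath(\mathbf u_\varphi|_{\Omega_-})=\mathbf 0$ is unsubstantiated and the interior bound collapses.

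The gap is repairable without abandoning your interior/exterior split: since $\jump{\gamma\mathbf u_\varphi}=-\boldsymbol\varphi$ has zero moments against $\mathcal M_\Gamma$ by hypothesis, you do have $\int_\Gamma\gamma^-\mathbf u_\varphi\cdot\mathbf c=\int_\Gamma\gamma^+\mathbf u_\varphi\cdot\mathbf c$, and the right-hand side, while not zero, is bounded by $C\|\gamma^+\mathbf u_\varphi\|_{1/2,\Gamma}\le C'\|\mathbf u_\varphi\|_{1,\rho,\Omega_+}\le C''\|\boldsymbol\varepsilon(\mathbf u_\varphi)\|_{\Omega_+}$ by Proposition \ref{prop:2.8}; because Lemma \ref{lemma:rigid}(c) is a norm \emph{equivalence}, a bound on $|\boldsymbol\jmath(\mathbf u_\varphi|_{\Omega_-})|$ rather than its vanishing suffices, giving $\|\mathbf u_\varphi\|_{1,\Omega_-}\le C\big(\|\boldsymbol\varepsilon(\mathbf u_\varphi)\|_{\Omega_-}+\|\boldsymbol\varepsilon(\mathbf u_\varphi)\|_{\Omega_+}\big)$ and closing the estimate. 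The paper packages this differently: it proves that $\|\boldsymbol\varepsilon(\mathbf u)\|_{\mathbb R^3\setminus\Gamma}^2+\sum_{\ell}\big|\int_\Gamma\jump{\gamma\mathbf u}\cdot\mathbf c_\ell\big|^2$ is an equivalent norm on all of $\mathbf W(\mathbb R^3\setminus\Gamma)$, and the extra terms are moments of the \emph{jump}, which is precisely the quantity that the hypothesis $\boldsymbol\varphi\in\mathbf H^{1/2}_{\mathcal M}(\Gamma)$ annihilates --- no claim about one-sided traces is ever needed.
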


\begin{proof}
Proceeding as before, with $\mathbf u_\varphi=\mathrm D\boldsymbol\varphi$, we prove that
\[
\langle\mathrm W\boldsymbol\varphi,\boldsymbol\varphi\rangle_\Gamma = 2\nu\|\boldsymbol\varepsilon(\mathbf u_\varphi)\|_{\mathbb R^3\setminus\Gamma}^2.
\]
Using Proposition \ref{prop:2.8}, Korn's inequality and a compactness argument, it is easy to prove that
\[
|\!|\!|\mathbf u|\!|\!|^2:=\|\boldsymbol\varepsilon(\mathbf u)\|_{\mathbb R^3\setminus\Gamma}+\sum_{\ell=1}^6 \Big| \int_\Gamma \jump{\gamma\mathbf u}\cdot\mathbf c_\ell\Big|^2\qquad \mbox{where } \mathcal M=\mathrm{span}\{\mathbf c_1,\ldots,\mathbf c_6\},
\]
is equivalent to the usual norm in $\mathbf W(\mathbb R^3\setminus\Gamma)$. However, if $\boldsymbol\varphi=-\jump{\gamma\mathbf u}\in \mathbf H^{1/2}_{\mathcal M}(\Gamma)$, then $|\!|\!|\mathbf u|\!|\!|=\|\boldsymbol\varepsilon(\mathbf u)\|_{\mathbb R^3\setminus\Gamma}$ and thus
\[
\langle\mathrm W\boldsymbol\varphi,\boldsymbol\varphi\rangle_\Gamma  \ge C_\Gamma \|\mathbf u_\varphi\|^2_{1,\rho,\mathbb R^3\setminus\Gamma}.
\]
Finally
\[
\|\boldsymbol\varphi\|_{1/2,\Gamma}\le \|\gamma^+\mathbf u_\varphi\|_{1/2,\Gamma}+\|\gamma^-\mathbf u_\varphi\|_{1/2,\Gamma} \le C \|\mathbf u_\varphi\|_{1,\rho,\mathbb R^3\setminus\Gamma}
\]
and the proof of the coercivity estimate is finished. To show that the range of $\mathrm W$ is $\mathcal M_\Gamma^\circ$, follow the steps of the end of the proof of Proposition \ref{prop:5.7}, using now the decomposition $\mathbf H^{1/2}(\Gamma)=\mathbf H^{1/2}_{\mathcal M}(\Gamma)\oplus \mathcal M_\Gamma$.
\end{proof}

\begin{proposition}\label{prop:6.7}
Single and double layer potentials are orthogonal with respect to the semi-inner product $a_{\mathbb R^3\setminus\Gamma}(\punto,\punto)$, that is,
\[
\mathbf v=\mathrm S_u\boldsymbol\lambda, \quad \mathbf u=\mathrm D_u\boldsymbol\varphi \quad \Longrightarrow\quad
a_{\mathbb R^3\setminus\Gamma}(\mathbf u,\mathbf v)=0.
\]
\end{proposition}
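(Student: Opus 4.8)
The plan is to read off the orthogonality from the Stokes normal--stress (Betti--type) identities of Section~3, being careful to assign the two fields asymmetric roles: the field $\mathbf u=\mathrm D_u\boldsymbol\varphi$ will carry the normal stress (it has vanishing stress jump), while $\mathbf v=\mathrm S_u\boldsymbol\lambda$ will act as the test field (its trace is single--valued on $\Gamma$ and it is divergence free, so the pressure contributions drop out).

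First I would set $(\mathbf u,p):=(\mathrm D_u\boldsymbol\varphi,\mathrm D_p\boldsymbol\varphi)$ and $\mathbf v:=\mathrm S_u\boldsymbol\lambda$. By Proposition~\ref{prop:5.4} we have $\jump{\gamma\mathbf v}=\mathbf 0$, so $\gamma^+\mathbf v=\gamma^-\mathbf v=:\gamma\mathbf v$, and $\mathrm{div}\,\mathbf v=0$ throughout $\mathbb R^3$; moreover $\mathbf v|_{\Omega_-}\in\mathbf H^1(\Omega_-)$ and $\mathbf v|_{\Omega_+}\in\mathbf W(\Omega_+)$ (Proposition~\ref{prop:5.3}), i.e. $\mathbf v$ is an admissible test field in both normal--stress formulas. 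On the other side, $(\mathbf u,p)\in\mathbf W(\mathbb R^3\setminus\Gamma)\times L^2(\mathbb R^3)$ is a homogeneous Stokes solution, so the Stokes version of the identities in Example~\#~3 applies. Testing them against $\mathbf v$ and using $\mathrm{div}\,\mathbf v=0$ to kill the pressure terms gives $\langle\mathbf t^-(\mathbf u,p),\gamma\mathbf v\rangle_\Gamma=2\nu(\boldsymbol\varepsilon(\mathbf u),\boldsymbol\varepsilon(\mathbf v))_{\Omega_-}$ and $\langle\mathbf t^+(\mathbf u,p),\gamma\mathbf v\rangle_\Gamma=-2\nu(\boldsymbol\varepsilon(\mathbf u),\boldsymbol\varepsilon(\mathbf v))_{\Omega_+}$.

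Subtracting the two identities, and using that $\gamma\mathbf v$ is the same on both sides of $\Gamma$, the left--hand side collapses to $\langle\jump{\mathbf t(\mathbf u,p)},\gamma\mathbf v\rangle_\Gamma$, while the right--hand side is precisely $2\nu(\boldsymbol\varepsilon(\mathbf u),\boldsymbol\varepsilon(\mathbf v))_{\mathbb R^3\setminus\Gamma}=a_{\mathbb R^3\setminus\Gamma}(\mathbf u,\mathbf v)$. By the double--layer jump relations (Proposition~\ref{prop:6.4}) we have $\jump{\mathbf t(\mathbf u,p)}=\mathbf 0$, so $a_{\mathbb R^3\setminus\Gamma}(\mathbf u,\mathbf v)=0$, which is the claim. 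There is essentially no serious obstacle here; the only point that needs care is the asymmetric assignment of roles: the ``natural'' symmetric pairing (stress of $\mathbf v$ against $\mathbf u$) would instead require the adjointness relation $\langle\boldsymbol\lambda,\mathrm K\boldsymbol\varphi\rangle_\Gamma=\langle\mathrm K^t\boldsymbol\lambda,\boldsymbol\varphi\rangle_\Gamma$, which has not yet been established at this stage, whereas the choice above bypasses it entirely. Finally one notes that $a_{\mathbb R^3\setminus\Gamma}(\punto,\punto)$ is symmetric, so the stated orthogonality is indeed symmetric in the two potentials.
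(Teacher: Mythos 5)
Your proof is correct and is in substance the paper's own argument: the paper simply takes $\mathbf v=\mathrm S_u\boldsymbol\lambda\in\mathbf W(\mathbb R^3)$ (divergence free, no trace jump) as a test function in the variational formulation \eqref{eq:6.2} of the double layer potential, whose volume identity is exactly what you re-derive by combining the normal-stress definitions of Section~3 with the jump relation $\jump{\mathbf t(\mathrm D_u\boldsymbol\varphi,\mathrm D_p\boldsymbol\varphi)}=\mathbf 0$. Since the pressure term drops out by $\mathrm{div}\,\mathrm S_u\boldsymbol\lambda=0$ in both write-ups, your unpacked version and the paper's one-line proof coincide.
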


\begin{proof} Taking $\mathbf v=\mathrm S_u\boldsymbol\lambda\in \mathbf W(\mathbb R^3)$ as test function in problem \eqref{eq:6.2} the result follows.
\end{proof}

\begin{proposition}[$\mathrm K^t$ is the transpose of $\mathrm K$]\label{prop:6.8}
\[
\langle \boldsymbol\lambda,\mathrm K\boldsymbol\varphi\rangle_\Gamma= \langle \mathrm K^t\boldsymbol\lambda,\boldsymbol\varphi\rangle_\Gamma \qquad \forall \boldsymbol\lambda\in \mathbf H^{-1/2}(\Gamma), \quad \boldsymbol\varphi\in \mathbf H^{1/2}(\Gamma).
\]
\end{proposition}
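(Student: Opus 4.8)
The plan is to rewrite both sides of the asserted identity in terms of the boundary data of the two potentials, reduce their difference to a volume integral, and then make that integral vanish by the orthogonality already recorded in Proposition \ref{prop:6.7}. First I would set $(\mathbf u_\lambda,p_\lambda):=(\mathrm S_u\boldsymbol\lambda,\mathrm S_p\boldsymbol\lambda)$, $(\mathbf u_\varphi,p_\varphi):=(\mathrm D_u\boldsymbol\varphi,\mathrm D_p\boldsymbol\varphi)$, and abbreviate $\mathbf t_\lambda:=\mathbf t(\mathbf u_\lambda,p_\lambda)$. The jump relations (Propositions \ref{prop:5.4} and \ref{prop:6.4}) give $\boldsymbol\lambda=\jump{\mathbf t_\lambda}$, $\mathrm K^t\boldsymbol\lambda=\ave{\mathbf t_\lambda}$, $\boldsymbol\varphi=-\jump{\gamma\mathbf u_\varphi}$ and $\mathrm K\boldsymbol\varphi=\ave{\gamma\mathbf u_\varphi}$, so that
\[
\langle\boldsymbol\lambda,\mathrm K\boldsymbol\varphi\rangle_\Gamma-\langle\mathrm K^t\boldsymbol\lambda,\boldsymbol\varphi\rangle_\Gamma
=\langle\jump{\mathbf t_\lambda},\ave{\gamma\mathbf u_\varphi}\rangle_\Gamma+\langle\ave{\mathbf t_\lambda},\jump{\gamma\mathbf u_\varphi}\rangle_\Gamma .
\]

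Next I would apply the elementary bilinear identity $\langle\jump a,\ave b\rangle_\Gamma+\langle\ave a,\jump b\rangle_\Gamma=\langle a^-,b^-\rangle_\Gamma-\langle a^+,b^+\rangle_\Gamma$, valid for any pair of one-sided traces paired through $\langle\cdot,\cdot\rangle_\Gamma$; taking $a=\mathbf t_\lambda$ and $b=\gamma\mathbf u_\varphi$ turns the right-hand side above into $\langle\mathbf t^-(\mathbf u_\lambda,p_\lambda),\gamma^-\mathbf u_\varphi\rangle_\Gamma-\langle\mathbf t^+(\mathbf u_\lambda,p_\lambda),\gamma^+\mathbf u_\varphi\rangle_\Gamma$. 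To evaluate these two pairings I would invoke the very definitions of the interior and exterior normal stresses from Section~3 (Review \# 1) --- this is legitimate because $\mathbf u_\varphi|_{\Omega_-}\in\mathbf H^1(\Omega_-)$ and $\mathbf u_\varphi|_{\Omega_+}\in\mathbf W(\Omega_+)$ are admissible test fields there, $\mathbf W$-fields being locally $H^1$ --- obtaining
\[
\langle\mathbf t^-(\mathbf u_\lambda,p_\lambda),\gamma^-\mathbf u_\varphi\rangle_\Gamma=2\nu(\boldsymbol\varepsilon(\mathbf u_\lambda),\boldsymbol\varepsilon(\mathbf u_\varphi))_{\Omega_-}-(p_\lambda,\mathrm{div}\,\mathbf u_\varphi)_{\Omega_-},
\]
\[
\langle\mathbf t^+(\mathbf u_\lambda,p_\lambda),\gamma^+\mathbf u_\varphi\rangle_\Gamma=-2\nu(\boldsymbol\varepsilon(\mathbf u_\lambda),\boldsymbol\varepsilon(\mathbf u_\varphi))_{\Omega_+}+(p_\lambda,\mathrm{div}\,\mathbf u_\varphi)_{\Omega_+}.
\]
Since $\mathrm{div}\,\mathbf u_\varphi=0$ in $\mathbb R^3\setminus\Gamma$, subtracting these collapses the difference to $a_{\mathbb R^3\setminus\Gamma}(\mathbf u_\lambda,\mathbf u_\varphi)$.

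Finally I would close the argument by symmetry of $a_{\mathbb R^3\setminus\Gamma}$ together with Proposition \ref{prop:6.7}: $a_{\mathbb R^3\setminus\Gamma}(\mathbf u_\lambda,\mathbf u_\varphi)=a_{\mathbb R^3\setminus\Gamma}(\mathbf u_\varphi,\mathbf u_\lambda)=0$, because $\mathbf u_\varphi=\mathrm D_u\boldsymbol\varphi$ and $\mathbf u_\lambda=\mathrm S_u\boldsymbol\lambda$ are $a_{\mathbb R^3\setminus\Gamma}$-orthogonal. Hence $\langle\boldsymbol\lambda,\mathrm K\boldsymbol\varphi\rangle_\Gamma-\langle\mathrm K^t\boldsymbol\lambda,\boldsymbol\varphi\rangle_\Gamma=0$, which is the claim. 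I do not expect any real obstacle: the only points requiring a moment's care are the bookkeeping of jumps, averages and signs, and checking that the one-sided restrictions of $\mathbf u_\varphi$ qualify as test fields for the normal-stress pairings; everything else is an immediate assembly of facts established earlier in the paper.
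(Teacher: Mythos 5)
Your proof is correct and follows essentially the same route as the paper: both arguments reduce the claim to the identity $\langle\mathbf t^-(\mathbf u_\lambda,p_\lambda),\gamma^-\mathbf u_\varphi\rangle_\Gamma-\langle\mathbf t^+(\mathbf u_\lambda,p_\lambda),\gamma^+\mathbf u_\varphi\rangle_\Gamma=a_{\mathbb R^3\setminus\Gamma}(\mathbf u_\lambda,\mathbf u_\varphi)$ via the weak definitions of the normal stresses and $\mathrm{div}\,\mathbf u_\varphi=0$, and then kill this term with the orthogonality of Proposition \ref{prop:6.7}; you merely run the computation from the $\mathrm K$/$\mathrm K^t$ side toward the volume term, whereas the paper runs it in the opposite direction.
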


\begin{proof}
Let $(\mathbf u_\lambda,p_\lambda):=(\mathrm S_u\boldsymbol\lambda,\mathrm S_p\boldsymbol\lambda)$ and $(\mathbf u_\varphi,p_\varphi):=(\mathrm D_u\boldsymbol\varphi,\mathrm D_p\boldsymbol\varphi)$. By Proposition \ref{prop:6.7} it follows that
\begin{eqnarray*} \langle \mathbf t^-(\mathbf u_\lambda,p_\lambda),\gamma^-\mathbf u_\varphi\rangle_\Gamma &=& a_{\Omega_-}(\mathbf u_\lambda,\mathbf u_\varphi)-(p_\lambda,\mathrm{div}\,\mathbf u_\varphi)_{\Omega_-} =
 a_{\Omega_-}(\mathbf u_\lambda,\mathbf u_\varphi)\\
&=& -a_{\Omega_+} (\mathbf u_\lambda,\mathbf u_\varphi)=
-a_{\Omega_+} (\mathbf u_\lambda,\mathbf u_\varphi)
+(p_\lambda,\mathrm{div}\,\mathbf u_\varphi)_{\Omega_+}\\
&=&\langle \mathbf t^+(\mathbf u_\lambda,p_\lambda),\gamma^+\mathbf u_\varphi\rangle_\Gamma .
\end{eqnarray*}
However, this equality is equivalent
\[
\langle \mathbf t^+(\mathbf u_\lambda,p_\lambda),-\jump{\gamma\mathbf u_\varphi}\rangle_\Gamma =\langle \jump{\mathbf t(\mathbf u_\lambda,p_\lambda)},\gamma^-\mathbf u_\varphi\rangle_\Gamma,
\]
which, by the jump properties (Propositions \ref{prop:5.4} and \ref{prop:6.4}), can be equivalently written as
\[
\langle-\smallfrac12\boldsymbol\lambda+\mathrm K^t \boldsymbol\lambda,\boldsymbol\varphi\rangle_\Gamma = \langle\boldsymbol\lambda,-\smallfrac12\boldsymbol\varphi+\mathrm K\boldsymbol\varphi\rangle_\Gamma,
\]
which, once more, is equivalent to the statement. The reader who has followed the proof in detail will have realized that this transposition theorem ends up being equivalent to the orthogonality property (Proposition \ref{prop:6.7}), which asserts that energy of single and double layer potentials remains separated. 
\end{proof}

\begin{proposition}[Representation formula]\label{prop:7.2}
Let $\mathbf u\in \mathbf W(\mathbb R^3\setminus\Gamma)$ and $p\in L^2(\mathbb R^3)$ satisfy
\begin{alignat*}{4}
 -2\nu \mathrm{div}\,\boldsymbol\varepsilon(\mathbf u)+\nabla p = \mathbf 0 & \qquad & \mbox{in $\mathbb R^3\setminus\Gamma$},\\
 \mathrm{div}\,\mathbf u = 0 & & \mbox{in $\mathbb R^3\setminus\Gamma$}.
\end{alignat*}
Then
\begin{equation}
\mathbf u = \mathrm S_u \jump{\mathbf t(\mathbf u,p)}-\mathrm D_u \jump{\gamma\mathbf u}\quad\mbox{and}\quad
p = \mathrm S_p \jump{\mathbf t(\mathbf u,p)}-\mathrm D_p \jump{\gamma\mathbf u}.
\end{equation}
\end{proposition}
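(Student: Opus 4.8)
The plan is to subtract the conjectured right-hand sides from $(\mathbf u,p)$ and show that the remainder solves the single-layer problem \eqref{eq:5.1} with \emph{zero} density, hence vanishes by the uniqueness already contained in Proposition \ref{prop:5.1}. So first I set $\boldsymbol\lambda:=\jump{\mathbf t(\mathbf u,p)}$, which lies in $\mathbf H^{-1/2}(\Gamma)$ by the normal-stress bound \eqref{eq:3.A4}, and $\boldsymbol\varphi:=\jump{\gamma\mathbf u}=\gamma^-\mathbf u-\gamma^+\mathbf u\in\mathbf H^{1/2}(\Gamma)$, and I introduce
\[
\mathbf w:=\mathbf u-\mathrm S_u\boldsymbol\lambda+\mathrm D_u\boldsymbol\varphi\in\mathbf W(\mathbb R^3\setminus\Gamma),\qquad
q:=p-\mathrm S_p\boldsymbol\lambda+\mathrm D_p\boldsymbol\varphi\in L^2(\mathbb R^3),
\]
using the mapping properties of the potentials (Propositions \ref{prop:5.3} and \ref{prop:6.3}). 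The entire proof then reduces to showing $(\mathbf w,q)=(\mathbf 0,0)$.

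The next step is bookkeeping with the jump relations. By linearity, $(\mathbf w,q)$ again satisfies $-2\nu\,\mathrm{div}\,\boldsymbol\varepsilon(\mathbf w)+\nabla q=\mathbf 0$ and $\mathrm{div}\,\mathbf w=0$ in $\mathbb R^3\setminus\Gamma$, since each of the three summands does. For the boundary jumps I invoke Proposition \ref{prop:5.4}, which gives $\jump{\gamma\mathrm S_u\boldsymbol\lambda}=\mathbf 0$ and $\jump{\mathbf t(\mathrm S_u\boldsymbol\lambda,\mathrm S_p\boldsymbol\lambda)}=\boldsymbol\lambda$, and Proposition \ref{prop:6.4}, which gives $\jump{\gamma\mathrm D_u\boldsymbol\varphi}=-\boldsymbol\varphi$ and $\jump{\mathbf t(\mathrm D_u\boldsymbol\varphi,\mathrm D_p\boldsymbol\varphi)}=\mathbf 0$. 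Adding these up yields $\jump{\gamma\mathbf w}=\boldsymbol\varphi-\mathbf 0-\boldsymbol\varphi=\mathbf 0$ and $\jump{\mathbf t(\mathbf w,q)}=\boldsymbol\lambda-\boldsymbol\lambda+\mathbf 0=\mathbf 0$. Since $\mathbf w$ has no velocity jump across the Lipschitz interface, its two $H^1$ pieces glue, so in fact $\mathbf w\in\mathbf W(\mathbb R^3)$; together with $q\in L^2(\mathbb R^3)$, this shows that $(\mathbf w,q)$ solves \eqref{eq:5.1} with the density replaced by $\mathbf 0$. By Proposition \ref{prop:5.1} this is equivalent to \eqref{eq:5.2} with vanishing right-hand side, whose unique solution is $(\mathbf 0,0)$. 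Hence $\mathbf w=\mathbf 0$ and $q=0$, which rearranges to $\mathbf u = \mathrm S_u\jump{\mathbf t(\mathbf u,p)}-\mathrm D_u\jump{\gamma\mathbf u}$ and $p=\mathrm S_p\jump{\mathbf t(\mathbf u,p)}-\mathrm D_p\jump{\gamma\mathbf u}$.

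The main obstacle — indeed the only non-mechanical point — is that last conclusion: one must be sure that the package ``no velocity jump, no normal-stress jump, homogeneous equations off $\Gamma$, correct global weighted spaces'' is exactly the hypothesis of the uniqueness half of Proposition \ref{prop:5.1} with zero data, so that nothing about behaviour at infinity is being smuggled in. If one prefers to sidestep that equivalence, the self-contained alternative is to argue as in the proof of Proposition \ref{prop:5.5}: since $\mathbf w\in\mathbf W(\mathbb R^3)$ and $q\in L^2(\mathbb R^3)$, the extended normal-stress identity applies with test function $\mathbf w$ itself, giving
\[
2\nu\|\boldsymbol\varepsilon(\mathbf w)\|_{\mathbb R^3}^2=a_{\mathbb R^3}(\mathbf w,\mathbf w)=\langle\jump{\mathbf t(\mathbf w,q)},\gamma\mathbf w\rangle_\Gamma+(q,\mathrm{div}\,\mathbf w)_{\mathbb R^3}=0;
\]
Corollary \ref{cor:2.2} then forces $\mathbf w=\mathbf 0$, whence $\nabla q=\mathbf 0$ in $\mathbb R^3\setminus\Gamma$ and $q\in L^2(\mathbb R^3)$ give $q\in\mathrm{span}\{\chi_{\Omega_-}\}$; finally $\jump{\mathbf t(\mathbf 0,\chi_{\Omega_-})}=-\mathbf n$ by Proposition \ref{prop:4.2}(b), so the already established $\jump{\mathbf t(\mathbf w,q)}=\mathbf 0$ forces $q=0$. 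Everything else in the argument is sign-tracking.
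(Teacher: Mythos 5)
Your proof is correct and is essentially the paper's argument: the paper likewise forms the potential pair from the jumps $\jump{\mathbf t(\mathbf u,p)}$ and $\jump{\gamma\mathbf u}$ and concludes by uniqueness of the transmission problem, which is exactly the uniqueness of \eqref{eq:5.2} with zero data that you invoke after subtracting. Your optional energy identity merely unfolds that same uniqueness, so no new route is involved.
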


\begin{proof}
Let $\boldsymbol\lambda:=\jump{\mathbf t(\mathbf u,p)}$ and $\boldsymbol\xi:=\jump{\gamma\mathbf u}$. Define then $\mathbf v:=\mathrm S_u\boldsymbol\lambda-\mathrm D_u\boldsymbol\xi $ and $ q=\mathrm S_p\boldsymbol\lambda-\mathrm D_p\boldsymbol\xi$.
By definition of the layer potentials, this pair satisfies
\begin{alignat*}{4}
 -2\nu \,\mathrm{div}\,\boldsymbol\varepsilon(\mathbf v)+\nabla q = \mathbf 0 & \qquad & \mbox{in $\mathbb R^3\setminus\Gamma$},\\
 \mathrm{div}\,\mathbf v = 0 & & \mbox{in $\mathbb R^3\setminus\Gamma$},\\
 \jump{\gamma\mathbf v}=\boldsymbol\xi, & & \\
 \jump{\mathbf t(\mathbf v,q)}= \boldsymbol\lambda. & &
\end{alignat*}
However, $(\mathbf u,p)$ satisfies the same equations. This problem has at most one solution (this is part of Proposition \ref{prop:5.1} and \ref{prop:6.1}).
Therefore, $\mathbf u=\mathbf v$ and $p=q$, which proves the result.
\end{proof}

\section{Two dimensional hurdles}\label{sec:2d}

There are two new difficulties in the two dimensional case:
\begin{itemize}
\item The exterior normal stress has zero average on the boundary $\Gamma$ (see Proposition \ref{prop:4.2}(d)). This is also reflected in the fact that the fundamental solution does not decay and a condition of zero average on the density is needed to ensure decay of velocity and pressure at infinity.
\item On the other hand, the associated weighted Sobolev space contains constant functions --Proposition \ref{prop:2.1}, Corollary \ref{cor:2.2} and formula \eqref{eq:2.A1}--. This fact leaks a two dimensional kernel (constant velocity fields with no pressure) in the set of solutions in free space. The associated transmission problems that we use to define the layer potentials are solvable up to constant velocity fields. Since the integral representations of the potentials show decaying velocity fields, we have to choose the right one from the variational formulation. The way we are going to do it is by looking around with the integral formulation (this is a {\em Deus ex machina} moment in the theory), picking up an average and imposing it on the solution. There might be simpler ways. (For instance, in the Laplace equation, the average around can be shown to be zero, which is how the right solution is chosen.)
\end{itemize}
As a byproduct of all of this, the representation formula shows that the jumps of the Cauchy data represent the solution up to a constant velocity field. This fact ends up being the variational form of the Stokes paradox.

\paragraph*{That Fredholm feeling.} If we try to define the layer potentials with the transmission problems
\begin{subequations}\label{eq:9.1}
\begin{alignat}{4}
(\mathbf u,p)\in \mathbf W(\mathbb R^2\setminus\Gamma)\times L^2(\mathbb R^2),\\
 -2\nu \,\mathrm{div}\,\boldsymbol\varepsilon(\mathbf u)+\nabla p = \mathbf 0 & \qquad & \mbox{in $\mathbb R^2\setminus\Gamma$},\\
 \mathrm{div}\,\mathbf u = 0 & & \mbox{in $\mathbb R^2\setminus\Gamma$},\\
 \jump{\gamma\mathbf u}=\boldsymbol\varphi, & & \\
 \jump{\mathbf t(\mathbf u,p)}= \boldsymbol\lambda, & &
\end{alignat}
\end{subequations}
we will be instantly annoyed by the existence of a two dimensional kernel: $(\mathbf u_\infty,0)\in \boldsymbol P_0(\mathbb R^2)\times \{0\}$ is, unfortunately, a homogeneous solution of \eqref{eq:9.1}. Anyone slightly aquainted with Fredholm properties will be ready to see that we need to ground the solution (in order to find a unique solution), and that the data $(\boldsymbol\varphi,\boldsymbol\lambda)\in \mathbf H^{1/2}(\Gamma)\times \mathbf H^{-1/2}(\Gamma)$ will be forced to satisfy a compatibility condition. The second requirement is easy: it falls entirely on the demand that $\boldsymbol\lambda\in \mathbf H^{-1/2}_0(\Gamma)$, which is necessary for existence of solution by Proposition \ref{prop:4.2}(c)-(d). We will ground the solution by integrating around the domain.
We thus define
\[
\boldsymbol\jmath(\mathbf u):=\int_\Xi \mathbf u(\mathbf x)\mathrm d \Xi(\mathbf x),
\]
where $\Xi=\partial B(\mathbf 0;R)$ and $\overline{\Omega^-}\subset B(\mathbf 0;R)$. The value for this surrounding integral will be taken from some moments on the two densities. This is where we bring some information from the future (from what it will be when we `go all integral'). With the matrix valued kernels (see ahead in \eqref{eq:11.31})
\[
\mathrm E_u(\mathbf r):=\frac1{4\pi\nu} \left( \log r^{-1}\,\mathrm I +\frac1{r^2} \mathbf r\otimes\mathbf r\right), \qquad
\mathrm T_u(\mathbf r;\mathbf n):=\frac1{\pi r^4}(\mathbf r\cdot\mathbf n)(\mathbf r \otimes\mathbf r),
\]
we construct two matrix-valued functions on $\Gamma$ (generated from $\Xi$)
\begin{equation}\label{eq:9.50}
\mathrm B_{\mathrm S}(\mathbf y):=\int_\Xi \mathrm E_u(\mathbf x-\mathbf y)\mathrm d\Xi(\mathbf x),\qquad 
 \mathrm B_{\mathrm D}(\mathbf y):=\int_\Xi \mathrm T_u(\mathbf x-\mathbf y;\mathbf n(\mathbf y))\,\mathrm d \Xi(\mathbf x),\qquad \mathbf y\in \Gamma,
\end{equation}
and then two $\mathbb R^2$-valued linear functionals
\begin{equation}\label{eq:9.51}
\boldsymbol\ell_{\mathrm S} (\boldsymbol\lambda):=\langle \mathrm B_{\mathrm S},\boldsymbol\lambda\rangle_\Gamma,
\qquad 
\boldsymbol\ell_{\mathrm D}(\boldsymbol\varphi):=\int_\Gamma \mathrm B_{\mathrm D}(\mathbf y)\boldsymbol\varphi(\mathbf y)\,\mathrm d\Gamma(\mathbf y).
\end{equation}
The grounding condition for problem \eqref{eq:9.1} will then be
\begin{equation}\label{eq:9.52}
\boldsymbol\jmath(\mathbf u)=\boldsymbol\ell_{\mathrm S}(\boldsymbol\lambda)-\boldsymbol\ell_{\mathrm D}(\boldsymbol\varphi),
\end{equation}
and the space
\begin{equation}\label{eq:9.20}
\mathbf W_\star(\mathbb R^2):=\{\mathbf u\in \mathbf W(\mathbb R^2)\,:\,\boldsymbol\jmath(\mathbf u)=\mathbf 0\}
\end{equation}
will play the role of the ground zero. From this moment on, the narrative is going to be extremely (while not strikingly) similar to that of Sections \ref{sec:5.1} 
and \ref{sec:6.1}. Since at this point the reader will be used to these arguments, we are going to be bolder and to define both layer potentials at the same time. Most proofs of this section are straightforward copies of those in Sections \ref{sec:5.1} and \ref{sec:6.1}.

\begin{proposition}\label{prop:AA}
Problem \eqref{eq:9.1} is equivalent to
\begin{equation}\label{eq:9.2}
\left[ \begin{array}{l}
\mathbf u\in \mathbf W(\mathbb R^2\setminus\Gamma), p\in L^2(\mathbb R^2),\\[1.5ex]
\jump{\gamma \mathbf u}=\boldsymbol\varphi,\\[1.5ex]
\begin{array}{rll} \ds a_{\mathbb R^2\setminus\Gamma}(\mathbf u,\mathbf v)-(p,\mathrm{div}\,\mathbf v)_{\mathbb R^2} &=\langle\boldsymbol\lambda,\gamma\mathbf v\rangle_\Gamma & \forall \mathbf v\in \mathbf W(\mathbb R^2),\\[1.5ex]
(\mathrm{div}\,\mathbf u,q)_{\mathbb R^2} &=0 & \forall q \in L^2(\mathbb R^2).
\end{array}
\end{array} \right.
\end{equation}
The modified problem 
\begin{equation}\label{eq:9.3}
\left[ \begin{array}{l}
\mathbf u\in \mathbf W(\mathbb R^2\setminus\Gamma), p\in L^2(\mathbb R^2),\\[1.5ex]
\jump{\gamma \mathbf u}=\boldsymbol\varphi,\\[1.5ex]
\boldsymbol\jmath(\mathbf u)=\boldsymbol\ell_{\mathrm S}(\boldsymbol\lambda)-\boldsymbol\ell_{\mathrm D}(\boldsymbol\varphi),\\[1.5ex]
\begin{array}{rll} \ds a_{\mathbb R^2\setminus\Gamma}(\mathbf u,\mathbf v)-(p,\mathrm{div}\,\mathbf v)_{\mathbb R^2} &=\langle\boldsymbol\lambda,\gamma\mathbf v\rangle_\Gamma & \forall \mathbf v\in \mathbf W(\mathbb R^2),\\[1.5ex]
(\mathrm{div}\,\mathbf u,q)_{\mathbb R^2} &=0 & \forall q \in L^2(\mathbb R^2).
\end{array}
\end{array} \right.
\end{equation}
is well posed. Therefore, problem \eqref{eq:9.2} has a solution that is unique up to elements of $\boldsymbol P_0(\mathbb R^2)\times \{0\}$.
\end{proposition}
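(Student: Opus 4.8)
The plan is to run the template of Propositions~\ref{prop:5.1} and~\ref{prop:6.1}, adding only the bookkeeping forced by the kernel $\boldsymbol P_0(\mathbb R^2)\times\{0\}$ and by the grounding functional $\boldsymbol\jmath$. The equivalence of \eqref{eq:9.1} and \eqref{eq:9.2} is pure distribution theory: localized test fields $\mathbf v\in\boldsymbol{\mathcal D}(\mathbb R^2\setminus\Gamma)$ recover the momentum and incompressibility equations in $\mathbb R^2\setminus\Gamma$, and then for general $\mathbf v\in\mathbf W(\mathbb R^2)$ the weak Green identity defining $\mathbf t^\pm$ — extended from $\mathbf H^1(\mathbb R^2)$ to $\mathbf W(\mathbb R^2)$ by density, Proposition~\ref{prop:2.1} — rewrites $a_{\mathbb R^2\setminus\Gamma}(\mathbf u,\mathbf v)-(p,\mathrm{div}\,\mathbf v)_{\mathbb R^2}$ as $\langle\jump{\mathbf t(\mathbf u,p)},\gamma\mathbf v\rangle_\Gamma$, so that the first variational equation says exactly $\jump{\mathbf t(\mathbf u,p)}=\boldsymbol\lambda$; the condition $\jump{\gamma\mathbf u}=\boldsymbol\varphi$ is imposed verbatim in both problems. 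I do not expect any difficulty here.

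For the well-posedness of \eqref{eq:9.3} I would first record two facts about $\boldsymbol\jmath(\mathbf u)=\int_\Xi\mathbf u$. It is bounded on $\mathbf W(\mathbb R^2)$ (since $\rho_2$ is bounded below on any fixed ball and $W(\mathbb R^2)\hookrightarrow H^1_{\mathrm{loc}}$, so the trace on $\Xi$ is controlled by $\|\cdot\|_{1,\rho,\mathbb R^2}$), and its restriction to $\boldsymbol P_0(\mathbb R^2)$ is the isomorphism $\mathbf c\mapsto|\Xi|\,\mathbf c$. Hence $\mathbf W(\mathbb R^2)=\mathbf W_\star(\mathbb R^2)\oplus\boldsymbol P_0(\mathbb R^2)$ is a stable direct sum, and Corollary~\ref{cor:2.2}(b) applied with exactly this $\boldsymbol\jmath$ shows that $a_{\mathbb R^2}$ is coercive on $\mathbf W_\star(\mathbb R^2)$. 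Next fix a bounded lifting $L:\mathbf H^{1/2}(\Gamma)\to\mathbf W(\mathbb R^2\setminus\Gamma)$ with $\jump{\gamma L\boldsymbol\varphi}=\boldsymbol\varphi$ (extend a bounded right inverse of $\gamma^-$ by zero on $\Omega_+$) and write $\mathbf u=L\boldsymbol\varphi+\widehat{\mathbf u}+\mathbf c$ with $\widehat{\mathbf u}\in\mathbf W_\star(\mathbb R^2)$ and $\mathbf c\in\boldsymbol P_0(\mathbb R^2)$. Constant fields have vanishing strain and divergence, so $\mathbf c$ disappears from both variational identities and the grounding condition just fixes it, $|\Xi|\,\mathbf c=\boldsymbol\ell_{\mathrm S}(\boldsymbol\lambda)-\boldsymbol\ell_{\mathrm D}(\boldsymbol\varphi)-\boldsymbol\jmath(L\boldsymbol\varphi)$ (the functionals $\boldsymbol\ell_{\mathrm S},\boldsymbol\ell_{\mathrm D}$ being bounded since $\Xi$ stays away from $\Gamma$). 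Testing the first identity against $\boldsymbol P_0(\mathbb R^2)$ forces $\boldsymbol\lambda\in\mathbf H^{-1/2}_0(\Gamma)$ — the standing compatibility hypothesis of the section, necessary for \eqref{eq:9.2}--\eqref{eq:9.3} to be solvable at all — and, granting it, it suffices to test with $\mathbf v\in\mathbf W_\star(\mathbb R^2)$. What remains is the saddle-point problem
\[
a_{\mathbb R^2}(\widehat{\mathbf u},\mathbf v)-(p,\mathrm{div}\,\mathbf v)_{\mathbb R^2}=\langle\boldsymbol\lambda,\gamma\mathbf v\rangle_\Gamma-a_{\mathbb R^2\setminus\Gamma}(L\boldsymbol\varphi,\mathbf v),\qquad (\mathrm{div}\,\widehat{\mathbf u},q)_{\mathbb R^2}=-(\mathrm{div}\,L\boldsymbol\varphi,q)_{\mathbb R^2\setminus\Gamma},
\]
for all $\mathbf v\in\mathbf W_\star(\mathbb R^2)$, $q\in L^2(\mathbb R^2)$. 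Here $a_{\mathbb R^2}$ is coercive on $\mathbf W_\star(\mathbb R^2)$ and $\mathrm{div}:\mathbf W_\star(\mathbb R^2)\to L^2(\mathbb R^2)$ is onto (take a preimage from Proposition~\ref{cor:2.5} and subtract its $\boldsymbol P_0$-part, which does not change the divergence), so the Brezzi conditions hold; $(\widehat{\mathbf u},p)$ exists, is unique, and depends continuously on $(\boldsymbol\varphi,\boldsymbol\lambda)$, and reassembling $\mathbf u=L\boldsymbol\varphi+\widehat{\mathbf u}+\mathbf c$ yields the unique solution of \eqref{eq:9.3}.

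The last assertion follows quickly. The solution of \eqref{eq:9.3} also solves \eqref{eq:9.2}, so \eqref{eq:9.2} is solvable. If $(\mathbf u_0,p_0)$ solves the homogeneous version of \eqref{eq:9.2}, testing with $\mathbf v=\mathbf u_0$ and using $\mathrm{div}\,\mathbf u_0=0$ gives $\|\boldsymbol\varepsilon(\mathbf u_0)\|_{\mathbb R^2}=0$; hence $\mathbf u_0$ is a global rigid motion lying in $\mathbf W(\mathbb R^2)$, which by \eqref{eq:2.A1} forces $\mathbf u_0\in\boldsymbol P_0(\mathbb R^2)$. Then $\boldsymbol\varepsilon(\mathbf u_0)=0$ gives $\nabla p_0=\mathbf 0$ in $\mathbb R^2\setminus\Gamma$, so $p_0\in\mathrm{span}\{\chi_{\Omega_-}\}$ (recall $p_0\in L^2(\mathbb R^2)$), and $\mathbf 0=\jump{\mathbf t(\mathbf u_0,p_0)}=c\,\jump{\mathbf t(\mathbf 0,\chi_{\Omega_-})}=-c\,\mathbf n$ (Proposition~\ref{prop:4.2}(b)) forces $p_0\equiv 0$. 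Conversely every $(\mathbf c,0)$ with $\mathbf c\in\boldsymbol P_0(\mathbb R^2)$ solves the homogeneous problem, so the solution set of \eqref{eq:9.2} is a particular solution plus $\boldsymbol P_0(\mathbb R^2)\times\{0\}$.

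The genuinely new point — where I would be most careful — is the interaction between the grounding constraint $\boldsymbol\jmath(\mathbf u)=\boldsymbol\ell_{\mathrm S}(\boldsymbol\lambda)-\boldsymbol\ell_{\mathrm D}(\boldsymbol\varphi)$ and the mixed formulation: one must see that the $\boldsymbol P_0(\mathbb R^2)$-component of the velocity is invisible to both bilinear forms, so that it can be peeled off and set \emph{a posteriori} through $\boldsymbol\jmath$, leaving a truly coercive mixed problem on $\mathbf W_\star(\mathbb R^2)\times L^2(\mathbb R^2)$. Verifying that $\boldsymbol\jmath$ is bounded on $\mathbf W(\mathbb R^2)$ and that $\boldsymbol\jmath|_{\boldsymbol P_0(\mathbb R^2)}$ is invertible — so the splitting is stable and Corollary~\ref{cor:2.2}(b) applies with this very $\boldsymbol\jmath$ — is the small piece of real work.
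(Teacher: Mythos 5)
Your proposal is correct and follows essentially the same route as the paper: equivalence by distribution theory, the splitting $\mathbf W(\mathbb R^2)=\mathbf W_\star(\mathbb R^2)\oplus\boldsymbol P_0(\mathbb R^2)$ with the compatibility condition $\boldsymbol\lambda\in\mathbf H^{-1/2}_0(\Gamma)$, a lifting of the jump supported in $\Omega_-$, peeling off the constant velocity fixed a posteriori by $\boldsymbol\jmath$, and Brezzi theory with coercivity from Corollary \ref{cor:2.2}(b) and surjectivity of $\mathrm{div}$ on $\mathbf W_\star(\mathbb R^2)$ from Proposition \ref{cor:2.5}. If anything, you are slightly more careful than the paper's write-up, since you keep the divergence correction $-(\mathrm{div}\,L\boldsymbol\varphi,q)$ in the reduced saddle-point problem and spell out that the homogeneous kernel of \eqref{eq:9.2} is exactly $\boldsymbol P_0(\mathbb R^2)\times\{0\}$.
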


\begin{proof}
The equivalence of \eqref{eq:9.1} and \eqref{eq:9.2} is, as usual, a simple exercise in distribution theory. Note now that
\begin{equation}\label{eq:9.22}
\mathbf W(\mathbb R^2)=\mathbf W_\star(\mathbb R^2)\oplus \boldsymbol P_0(\mathbb R^2).
\end{equation}
Since $\boldsymbol\lambda\in \mathbf H^{-1/2}_0(\Gamma)$, then the decomposition \eqref{eq:9.22} shows that
\[
a_{\mathbb R^2}(\mathbf u_\lambda,\mathbf v)-(p_\lambda,\mathrm{div}\,\mathbf v)_{\mathbb R^2} =\langle\boldsymbol\lambda,\gamma\mathbf v\rangle_\Gamma \qquad \forall \mathbf v\in \mathbf W_\star(\mathbb R^2)
\]
if and only if
\[
a_{\mathbb R^2}(\mathbf u_\lambda,\mathbf v)-(p_\lambda,\mathrm{div}\,\mathbf v)_{\mathbb R^2} =\langle\boldsymbol\lambda,\gamma\mathbf v\rangle_\Gamma \qquad \forall \mathbf v\in \mathbf W(\mathbb R^2).
\]
We next get rid of the non-homogeneous side conditions. We start constructing
\[
\mathbf u_\varphi^\circ\in \mathbf W(\mathbb R^d\setminus\Gamma), \qquad \gamma^-\mathbf u_\varphi^\circ=\boldsymbol\varphi, \qquad \mathbf u_\varphi^\circ\equiv 0\mbox{ in $\Omega_+$},
\]
so that $\jump{\gamma\mathbf u_\varphi^\circ}=\boldsymbol\varphi$ and $\boldsymbol\jmath(\mathbf u_\varphi^\circ)=\mathbf 0$. We then propose
\[
\mathbf u=\mathbf u^\star+\mathbf u_\varphi^\circ+\smallfrac1{|\Xi|} \big(\boldsymbol\ell_{\mathrm S}(\boldsymbol\lambda)-\boldsymbol\ell_{\mathrm D}(\boldsymbol\varphi)\big),
\]
where
\begin{equation}\label{eq:9.4}
\left[ \begin{array}{l}
\mathbf u^\star\in \mathbf W_\star(\mathbb R^2), p\in L^2(\mathbb R^2),\\[1.5ex]
\begin{array}{rll} \ds a_{\mathbb R^2}(\mathbf u^\star,\mathbf v)-(p,\mathrm{div}\,\mathbf v)_{\mathbb R^2} &=\langle\boldsymbol\lambda,\gamma\mathbf v\rangle_\Gamma-a_{\Omega_-}(\mathbf u_\varphi^\circ,\mathbf v) & \forall \mathbf v\in \mathbf W_\star(\mathbb R^2),\\[1.5ex]
(\mathrm{div}\,\mathbf u^\star,q)_{\mathbb R^2} &=0 & \forall q \in L^2(\mathbb R^2),
\end{array}
\end{array} \right.
\end{equation}
as the solution of \eqref{eq:9.3}. That problem \eqref{eq:9.4} is well posed follows from the theory of mixed problems and the arguments we next give. 
By  Corollary \ref{cor:2.2}, $\|\boldsymbol\varepsilon(\punto)\|_{\mathbb R^2}$ is equivalent to the weighted Sobolev norm in $\mathbf W_\star(\mathbb R^2)$, which means that the diagonal bilinear form $a_{\mathbb R^2}$ is coercive in $\mathbf W_\star(\mathbb R^2)$.  By Proposition \ref{cor:2.5}, $\mathrm{div}:\mathbf W(\mathbb R^2) \to L^2(\mathbb R^2)$ is onto and therefore, using \eqref{eq:9.22}, it follows that $\mathrm{div}:\mathbf W_\star(\mathbb R^2) \to L^2(\mathbb R^2)$ is also surjective. 
\end{proof}

\paragraph{Two (or four) potentials, four operators.} Problem \eqref{eq:9.3} defines a bounded linear map $\mathbf H^{1/2}(\Gamma)\times \mathbf H^{-1/2}_0(\Gamma) \to \mathbf W(\mathbb R^2\setminus\Gamma)\times L^2(\mathbb R^2)$. We are then allowed to see this as a matrix of operators
\[
\left[\begin{array}{c}\mathbf u \\ p\end{array}\right]=\left[\begin{array}{cc} \mathrm S_u & -\mathrm D_u \\ \mathrm S_p & -\mathrm D_p\end{array}\right]\left[\begin{array}{c} \boldsymbol\lambda \\ \boldsymbol\varphi\end{array}\right]=\left[ \begin{array}{cc} \mathbb S & -\mathbb D\end{array}\right]\left[\begin{array}{c} \boldsymbol\lambda \\ \boldsymbol\varphi\end{array}\right],
\]
where the second expression looks at $(\mathbf u,p)$ as a joint entity, and not as separate fields. The integral operators are defined by taking averages
\[
\left[\begin{array}{cc} \mathrm V & \mathrm K   \end{array}\right]:=\ave{\gamma\punto} \left[\begin{array}{cc}  \mathrm S_u & \mathrm D_u  \end{array}\right], \qquad 
\left[\begin{array}{cc} \mathrm K^t & \mathrm W   \end{array}\right]:=\ave{\mathbf t(\punto)} \left[\begin{array}{cc}  \mathrm S & -\mathrm D  \end{array}\right].
\]

\begin{proposition}[Mapping properties]
The following operators are bounded:
\begin{eqnarray*}
(\mathrm S_u,\mathrm S_p) &:& \mathbf H^{-1/2}_0(\Gamma) \to \mathbf W(\mathbb R^2)\times L^2(\mathbb R^2),\\
(\mathrm D_u,\mathrm D_p) &:& \mathbf H^{1/2}(\Gamma) \to \mathbf W(\mathbb R^2\setminus\Gamma)\times L^2(\mathbb R^2),\\
\mathrm V &:& \mathbf H^{-1/2}_0(\Gamma) \to \mathbf H^{1/2}(\Gamma),\\
\mathrm K^t &:& \mathbf H^{-1/2}_0(\Gamma) \to \mathbf H^{-1/2}(\Gamma),\\
\mathrm K &:& \mathbf H^{1/2}(\Gamma) \to \mathbf H^{1/2}(\Gamma),\\
\mathrm W &:& \mathbf H^{1/2}(\Gamma) \to \mathbf H^{-1/2}(\Gamma).
\end{eqnarray*}
\end{proposition}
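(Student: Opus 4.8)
The plan is to realize each of the six operators as the composition of the (bounded) solution operator of the grounded problem \eqref{eq:9.3} with a trace operator or a normal-stress operator, in complete parallel with the proofs of Propositions \ref{prop:5.3} and \ref{prop:6.3}.

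First I would record that, by Proposition \ref{prop:AA}, the solution map
\[
T:\ (\boldsymbol\varphi,\boldsymbol\lambda)\in \mathbf H^{1/2}(\Gamma)\times \mathbf H^{-1/2}_0(\Gamma)\ \longmapsto\ (\mathbf u,p)\in \mathbf W(\mathbb R^2\setminus\Gamma)\times L^2(\mathbb R^2)
\]
is well defined, linear and bounded. The one ingredient here that is not purely formal is the continuity of the grounding functionals $\boldsymbol\ell_{\mathrm S}$ and $\boldsymbol\ell_{\mathrm D}$ of \eqref{eq:9.52}: since $\overline{\Omega_-}\subset B(\mathbf 0;R)$, the sets $\Gamma$ and $\Xi$ are disjoint compacta, so $\mathrm E_u(\mathbf x-\mathbf y)$ and $\mathrm T_u(\mathbf x-\mathbf y;\mathbf n(\mathbf y))$ are smooth in $\mathbf x-\mathbf y$ on $\Xi\times\Gamma$ (with only an $\mathbf L^\infty$ dependence on $\mathbf n(\mathbf y)$ in the second); integrating in $\mathbf x$ makes $\mathrm B_{\mathrm S}$ a $\mathcal C^\infty$ and $\mathrm B_{\mathrm D}$ an $\mathbf L^\infty$ matrix field on $\Gamma$, whence $\boldsymbol\ell_{\mathrm S}$ is bounded on $\mathbf H^{-1/2}(\Gamma)$ and $\boldsymbol\ell_{\mathrm D}$ is bounded on $\mathbf L^2(\Gamma)\supset\mathbf H^{1/2}(\Gamma)$. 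Together with the stability built into the well-posedness of \eqref{eq:9.3}, this gives the continuity of $T$.

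Next I would read the operators off $T$. Setting $\boldsymbol\varphi=\mathbf 0$ gives $T(\mathbf 0,\boldsymbol\lambda)=(\mathrm S_u\boldsymbol\lambda,\mathrm S_p\boldsymbol\lambda)$; since $\jump{\gamma\mathrm S_u\boldsymbol\lambda}=\mathbf 0$, the velocity component in fact lies in $\mathbf W(\mathbb R^2)$, so $(\mathrm S_u,\mathrm S_p):\mathbf H^{-1/2}_0(\Gamma)\to\mathbf W(\mathbb R^2)\times L^2(\mathbb R^2)$ is bounded; setting $\boldsymbol\lambda=\mathbf 0$ gives $T(\boldsymbol\varphi,\mathbf 0)=(\mathrm D_u\boldsymbol\varphi,\mathrm D_p\boldsymbol\varphi)$ and hence the boundedness of $(\mathrm D_u,\mathrm D_p):\mathbf H^{1/2}(\Gamma)\to\mathbf W(\mathbb R^2\setminus\Gamma)\times L^2(\mathbb R^2)$. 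For $\mathrm V$ and $\mathrm K$ I would post-compose with $\ave{\gamma\punto}=\smallfrac12(\gamma^-+\gamma^+)$, which is bounded $\mathbf W(\mathbb R^2\setminus\Gamma)\to\mathbf H^{1/2}(\Gamma)$ because $\gamma^-:\mathbf H^1(\Omega_-)\to\mathbf H^{1/2}(\Gamma)$ and $\gamma^+:\mathbf W(\Omega_+)\to\mathbf H^{1/2}(\Gamma)$ are; this yields $\mathrm V=\ave{\gamma\punto}\mathrm S_u$ and $\mathrm K=\ave{\gamma\punto}\mathrm D_u$ with the stated mapping properties (for $\mathrm V$ one may just as well use $\gamma^\pm$ directly, since $\mathrm S_u\boldsymbol\lambda$ does not jump). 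For $\mathrm K^t$ and $\mathrm W$ I would instead post-compose with $\ave{\mathbf t(\punto)}$ and invoke the normal-stress estimate \eqref{eq:3.A4}, $\|\mathbf t^\pm(\mathbf u,p)\|_{-1/2,\Gamma}\le C_\Gamma(\|\boldsymbol\varepsilon(\mathbf u)\|_{\Omega_\pm}+\|p\|_{\Omega_\pm})\le C_\Gamma(\|\mathbf u\|_{1,\rho,\mathbb R^2\setminus\Gamma}+\|p\|_{\mathbb R^2})$, so that $\ave{\mathbf t(\punto)}$ is bounded from the solution space into $\mathbf H^{-1/2}(\Gamma)$; composing with $\mathbb S$ and $-\mathbb D$ respectively yields the boundedness of $\mathrm K^t:\mathbf H^{-1/2}_0(\Gamma)\to\mathbf H^{-1/2}(\Gamma)$ and $\mathrm W:\mathbf H^{1/2}(\Gamma)\to\mathbf H^{-1/2}(\Gamma)$.

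The only genuinely new ingredient relative to Sections \ref{sec:5.1}--\ref{sec:6.1} is the well-posedness (and in particular the stability) of the grounded problem \eqref{eq:9.3} together with the boundedness of the grounding functionals $\boldsymbol\ell_{\mathrm S},\boldsymbol\ell_{\mathrm D}$; the former is Proposition \ref{prop:AA} and the latter is the kernel-regularity remark above, so I do not expect any real obstacle here -- the remainder is the routine bookkeeping of composing bounded maps.
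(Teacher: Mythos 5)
Your proof is correct and follows essentially the same route the paper intends: the boundedness of the solution operator of the grounded problem \eqref{eq:9.3} (Proposition \ref{prop:AA}), combined with the trace operators and the normal-stress bound \eqref{eq:3.A4}, exactly as in Propositions \ref{prop:5.3} and \ref{prop:6.3}. Your explicit remark on the smoothness of the kernels over the disjoint sets $\Xi$ and $\Gamma$, giving boundedness of $\boldsymbol\ell_{\mathrm S}$ and $\boldsymbol\ell_{\mathrm D}$, is a detail the paper leaves implicit but is entirely in the spirit of its argument.
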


\begin{proposition}[Jump relations]
For all $\boldsymbol\lambda\in \mathbf H^{-1/2}_0(\Gamma)$ and $\boldsymbol\varphi \in \mathbf H^{1/2}(\Gamma)$,
\begin{eqnarray*}
\jump{\gamma \mathrm S_u\boldsymbol\lambda}=0, & & \jump{\gamma D_u\boldsymbol\varphi}=-\boldsymbol\varphi,\\
\jump{\mathbf t(\mathrm S_u\boldsymbol\lambda,\mathrm S_p\boldsymbol\lambda)}=\boldsymbol\lambda, & &
\jump{\mathbf t(\mathrm D_u\boldsymbol\varphi,\mathrm D_p\boldsymbol\varphi)}=0,\\
\mathbf t^\pm(\mathrm S_u\boldsymbol\lambda,\mathrm S_p\boldsymbol\lambda)=\mp\smallfrac12\boldsymbol\lambda+\mathrm K^t\boldsymbol\lambda, & & 
\mathbf t^\pm (\mathrm D_u\boldsymbol\varphi,\mathrm D_p\boldsymbol\varphi)=\pm\smallfrac12 \boldsymbol\varphi + \mathrm K\boldsymbol\varphi.
\end{eqnarray*}
\end{proposition}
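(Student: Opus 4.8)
\emph{Proof proposal.} The statement is a direct unpacking of the way the potentials are defined through problem \eqref{eq:9.3}, together with the operator names $\mathrm V,\mathrm K,\mathrm K^t,\mathrm W$ introduced right afterwards; it copies almost verbatim the proofs of Propositions \ref{prop:5.4} and \ref{prop:6.4}. Given $(\boldsymbol\lambda,\boldsymbol\varphi)\in \mathbf H^{-1/2}_0(\Gamma)\times\mathbf H^{1/2}(\Gamma)$, let $(\mathbf u,p):=\mathbb S\boldsymbol\lambda-\mathbb D\boldsymbol\varphi=(\mathrm S_u\boldsymbol\lambda-\mathrm D_u\boldsymbol\varphi,\ \mathrm S_p\boldsymbol\lambda-\mathrm D_p\boldsymbol\varphi)$ be the solution of \eqref{eq:9.3}, unique by Proposition \ref{prop:AA} (so that $\mathrm S_u,\mathrm D_u,\ldots$ are genuinely single-valued operators). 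Since \eqref{eq:9.3} is just \eqref{eq:9.2} supplemented with the grounding equation \eqref{eq:9.52}, and \eqref{eq:9.2} is equivalent to the transmission problem \eqref{eq:9.1} (again Proposition \ref{prop:AA}), this pair satisfies, before anything else,
\[
\jump{\gamma\mathbf u}=\boldsymbol\varphi, \qquad \jump{\mathbf t(\mathbf u,p)}=\boldsymbol\lambda.
\]

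The plan is then to specialize the data. First I would take $\boldsymbol\varphi=\mathbf 0$: by linearity of $\boldsymbol\ell_{\mathrm S}$ and $\boldsymbol\ell_{\mathrm D}$, the grounding condition \eqref{eq:9.52} collapses to $\boldsymbol\jmath(\mathbf u)=\boldsymbol\ell_{\mathrm S}(\boldsymbol\lambda)$, which is exactly the normalization built into the definition of $(\mathrm S_u\boldsymbol\lambda,\mathrm S_p\boldsymbol\lambda)$; hence $(\mathbf u,p)=(\mathrm S_u\boldsymbol\lambda,\mathrm S_p\boldsymbol\lambda)$ and the two displayed identities read $\jump{\gamma\mathrm S_u\boldsymbol\lambda}=0$ and $\jump{\mathbf t(\mathrm S_u\boldsymbol\lambda,\mathrm S_p\boldsymbol\lambda)}=\boldsymbol\lambda$. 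Symmetrically, taking $\boldsymbol\lambda=\mathbf 0$ forces $(\mathbf u,p)=-(\mathrm D_u\boldsymbol\varphi,\mathrm D_p\boldsymbol\varphi)$ and $\boldsymbol\jmath(\mathbf u)=-\boldsymbol\ell_{\mathrm D}(\boldsymbol\varphi)$, so the same identities give $\jump{\gamma\mathrm D_u\boldsymbol\varphi}=-\boldsymbol\varphi$ and $\jump{\mathbf t(\mathrm D_u\boldsymbol\varphi,\mathrm D_p\boldsymbol\varphi)}=\mathbf 0$. This settles the first two rows.

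For the one-sided Cauchy data I would invoke the elementary identity $\chi^{\pm}=\ave{\chi}\mp\smallfrac12\jump{\chi}$, valid for any quantity $\chi$ possessing interior and exterior traces. Applying it to $\chi=\mathbf t(\mathrm S_u\boldsymbol\lambda,\mathrm S_p\boldsymbol\lambda)$ and using $\mathrm K^t\boldsymbol\lambda=\ave{\mathbf t(\mathrm S_u\boldsymbol\lambda,\mathrm S_p\boldsymbol\lambda)}$ together with the jump proved above gives $\mathbf t^{\pm}(\mathrm S_u\boldsymbol\lambda,\mathrm S_p\boldsymbol\lambda)=\mp\smallfrac12\boldsymbol\lambda+\mathrm K^t\boldsymbol\lambda$; applying it to $\chi=\gamma\,\mathrm D_u\boldsymbol\varphi$ and using $\mathrm K\boldsymbol\varphi=\ave{\gamma\,\mathrm D_u\boldsymbol\varphi}$ together with $\jump{\gamma\mathrm D_u\boldsymbol\varphi}=-\boldsymbol\varphi$ gives the one-sided traces $\gamma^{\pm}\mathrm D_u\boldsymbol\varphi=\pm\smallfrac12\boldsymbol\varphi+\mathrm K\boldsymbol\varphi$. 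The bound \eqref{eq:3.A4} and the mapping properties recorded just before this proposition ensure that all the traces and tractions involved lie in $\mathbf H^{\pm1/2}(\Gamma)$, so these manipulations make sense.

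I do not anticipate a real obstacle: all the analysis -- unique solvability of the grounded problem \eqref{eq:9.3}, hence single-valuedness of the potentials, and the $\mathbf H^{-1/2}(\Gamma)$-control on the stresses -- has already been carried out in Proposition \ref{prop:AA} and in Sections \ref{sec:5.1}--\ref{sec:6.1}, so what is left is pure bookkeeping of signs and averages, exactly as in three dimensions. The one point where the two dimensional narrative should not run on autopilot is the reduction of the grounding equation \eqref{eq:9.52} when $\boldsymbol\varphi=\mathbf 0$ or $\boldsymbol\lambda=\mathbf 0$: one must be sure it degenerates to precisely the normalization that picks out $\mathrm S_u$ (respectively $\mathrm D_u$) from the constant-velocity indeterminacy of \eqref{eq:9.1}, which it does by linearity of $\boldsymbol\ell_{\mathrm S}$ and $\boldsymbol\ell_{\mathrm D}$.
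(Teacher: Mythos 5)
Your proof is correct and takes essentially the same route as the paper, which gives no separate argument here but declares the proofs of Propositions \ref{prop:5.4} and \ref{prop:6.4} to carry over: the jumps are read off from the definition of the potentials through the grounded transmission problem \eqref{eq:9.3} (equivalently \eqref{eq:9.1}), after specializing $\boldsymbol\varphi=\mathbf 0$ or $\boldsymbol\lambda=\mathbf 0$, and the one-sided data follow by combining jump and average with the definitions of $\mathrm K^t$ and $\mathrm K$. You also correctly read the last identity as the one-sided traces $\gamma^\pm \mathrm D_u\boldsymbol\varphi=\pm\smallfrac12\boldsymbol\varphi+\mathrm K\boldsymbol\varphi$ (the $\mathbf t^\pm$ written in the statement is a slip, since the double-layer traction does not jump).
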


\paragraph*{Simple solutions.}
With $\boldsymbol\lambda=\mathbf n$ and $\boldsymbol\varphi=0$, we get
\begin{equation}\label{eq:9.44}
\mathrm S_u \mathbf n= \mathbf 0, \quad \mathrm S_p\mathbf n=-\chi_{\Omega_-}, \quad \mathrm V\mathbf n=\mathbf 0, \quad -\smallfrac12\mathbf n+\mathrm K^t\mathbf n=\mathbf 0.
\end{equation}
With $\boldsymbol\lambda=\mathbf 0$ and $\boldsymbol\varphi=\mathbf c\in \mathcal M_\Gamma=\gamma \mathcal M$, we get
\begin{equation}\label{eq:9.5}
\mathrm D_u\mathbf c=-\chi_{\Omega_-}\mathbf c, \quad \mathrm D_p\mathbf c=0, \quad \mathrm W\mathbf c=\mathbf 0, \quad \smallfrac12\mathbf c+\mathrm K\mathbf c=\mathbf 0.
\end{equation}

\begin{proposition}[Properties of $\mathrm V$]
For all $\boldsymbol\lambda,\boldsymbol\mu\in \mathbf H^{-1/2}_0(\Gamma)$,
\begin{equation}\label{eq:9.6}
\langle \boldsymbol\mu,\mathrm V\boldsymbol\lambda\rangle_\Gamma = \langle \boldsymbol\lambda,\mathrm V\boldsymbol\mu\rangle_\Gamma, \qquad \langle \boldsymbol\lambda,\mathrm V\boldsymbol\lambda\rangle_\Gamma\ge 0
\end{equation}
and there exists $C_\Gamma>0$ such that
\[
\langle \boldsymbol\lambda,\mathrm V\boldsymbol\lambda\rangle_\Gamma\ge C_\Gamma \|\boldsymbol\lambda\|_{-1/2,\Gamma}^2 \qquad \forall \boldsymbol\lambda\in \mathbf H^{-1/2}_m(\Gamma)\cap \mathbf H^{-1/2}_0(\Gamma).
\]
Also
\[
\mathrm{Ker}\,\mathrm V =\mathrm{span}\,\{ \mathbf n\} \qquad \mbox{and} \qquad \mathrm{Range}\, \mathrm V \subset  \mathbf H^{1/2}_n(\Gamma).
\]
\end{proposition}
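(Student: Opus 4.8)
The plan is to mirror, essentially line for line, the three--dimensional Propositions \ref{prop:5.5} and \ref{prop:5.7}, the only extra bookkeeping being to keep straight which of the spaces $\mathbf H^{-1/2}_0(\Gamma)$, $\mathbf H^{-1/2}_m(\Gamma)$ and $\mathbf W_\star(\mathbb R^2)$ is in force at each step. Throughout I would write, for $\boldsymbol\lambda\in\mathbf H^{-1/2}_0(\Gamma)$, $(\mathbf u_\lambda,p_\lambda):=(\mathrm S_u\boldsymbol\lambda,\mathrm S_p\boldsymbol\lambda)$ for the solution of \eqref{eq:9.3} with $\boldsymbol\varphi=\mathbf 0$; then $\jump{\gamma\mathbf u_\lambda}=\mathbf 0$, so $\mathbf u_\lambda\in\mathbf W(\mathbb R^2)$ and $\mathrm{div}\,\mathbf u_\lambda=0$ on all of $\mathbb R^2$. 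For symmetry and positivity I would test the momentum equation of \eqref{eq:9.3} written for $\boldsymbol\mu$ with the admissible field $\mathbf v=\mathbf u_\lambda$; since $\mathrm{div}\,\mathbf u_\lambda=0$ this gives
\[
\langle\boldsymbol\mu,\mathrm V\boldsymbol\lambda\rangle_\Gamma=\langle\boldsymbol\mu,\gamma\mathbf u_\lambda\rangle_\Gamma=a_{\mathbb R^2}(\mathbf u_\mu,\mathbf u_\lambda),
\]
manifestly symmetric in $\boldsymbol\lambda,\boldsymbol\mu$ and $\ge 0$ when $\boldsymbol\mu=\boldsymbol\lambda$, which is \eqref{eq:9.6}.

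For the range and kernel I would first note that $-\mathbf n=\mathbf t^-(\mathbf 0,1)$ lies in $\mathbf H^{-1/2}_0(\Gamma)$ by Proposition \ref{prop:4.2}(b),(c), so $\mathbf n$ is a legitimate argument of $\mathrm V$, and $\mathrm V\mathbf n=\mathbf 0$ by \eqref{eq:9.44}; together with the symmetry just proved this yields $\int_\Gamma(\mathrm V\boldsymbol\lambda)\cdot\mathbf n=\langle\mathbf n,\mathrm V\boldsymbol\lambda\rangle_\Gamma=\langle\boldsymbol\lambda,\mathrm V\mathbf n\rangle_\Gamma=0$, i.e.\ $\mathrm{Range}\,\mathrm V\subset\mathbf H^{1/2}_n(\Gamma)$, and $\mathrm{span}\{\mathbf n\}\subseteq\mathrm{Ker}\,\mathrm V$. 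For the reverse kernel inclusion I would run the argument of Proposition \ref{prop:5.5}: $\mathrm V\boldsymbol\lambda=\mathbf 0$ gives $a_{\mathbb R^2}(\mathbf u_\lambda,\mathbf u_\lambda)=0$, hence $\boldsymbol\varepsilon(\mathbf u_\lambda)=0$ on $\mathbb R^2$; Lemma \ref{lemma:rigid}(a) applied on balls (as in Lemma \ref{prop:8.4b}) together with \eqref{eq:2.A1} then forces $\mathbf u_\lambda\in\mathcal M\cap\mathbf W(\mathbb R^2)=\boldsymbol P_0(\mathbb R^2)$, i.e.\ $\mathbf u_\lambda$ is a constant vector field; the momentum equation then gives $\nabla p_\lambda=\mathbf 0$ in $\mathbb R^2\setminus\Gamma$ and, $p_\lambda$ being in $L^2(\mathbb R^2)$, $p_\lambda=c\,\chi_{\Omega_-}$; since $\boldsymbol\varepsilon$ of a constant vanishes, the velocity contributes nothing to the normal stress, so $\boldsymbol\lambda=\jump{\mathbf t(\mathbf u_\lambda,p_\lambda)}=c\,\jump{\mathbf t(\mathbf 0,\chi_{\Omega_-})}=-c\,\mathbf n$, giving $\mathrm{Ker}\,\mathrm V=\mathrm{span}\{\mathbf n\}$.

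For the coercivity estimate on $\mathbf H^{-1/2}_m(\Gamma)\cap\mathbf H^{-1/2}_0(\Gamma)$ I would copy Proposition \ref{prop:5.7}: stability of the decomposition $\mathbf H^{1/2}(\Gamma)=\mathbf H^{1/2}_n(\Gamma)\oplus\mathrm{span}\{\mathbf m\}$ (Proposition \ref{prop:3.1}) and $\langle\boldsymbol\lambda,\mathbf m\rangle_\Gamma=0$ make $\|\boldsymbol\lambda\|_{-1/2,\Gamma}$ comparable to the supremum of $|\langle\boldsymbol\lambda,\boldsymbol\xi\rangle_\Gamma|/\|\boldsymbol\xi\|_{1/2,\Gamma}$ over $\mathbf 0\neq\boldsymbol\xi\in\mathbf H^{1/2}_n(\Gamma)$; using a bounded right inverse $\gamma^\dagger:\mathbf H^{1/2}_n(\Gamma)\to\mathbf V(\mathbb R^2)$ of the solenoidal trace (Proposition \ref{prop:5.6}, which holds for $d=2$) and testing \eqref{eq:9.3} with the solenoidal field $\mathbf v=\gamma^\dagger\boldsymbol\xi$ (the pressure term drops) one gets
\[
|\langle\boldsymbol\lambda,\boldsymbol\xi\rangle_\Gamma|=|a_{\mathbb R^2}(\mathbf u_\lambda,\gamma^\dagger\boldsymbol\xi)|\le 2\nu\|\boldsymbol\varepsilon(\mathbf u_\lambda)\|_{\mathbb R^2}\|\mathrm D(\gamma^\dagger\boldsymbol\xi)\|_{\mathbb R^2}\le C\|\boldsymbol\varepsilon(\mathbf u_\lambda)\|_{\mathbb R^2}\|\boldsymbol\xi\|_{1/2,\Gamma},
\]
hence $\|\boldsymbol\lambda\|_{-1/2,\Gamma}\le C\|\boldsymbol\varepsilon(\mathbf u_\lambda)\|_{\mathbb R^2}$; combining with $\langle\boldsymbol\lambda,\mathrm V\boldsymbol\lambda\rangle_\Gamma=a_{\mathbb R^2}(\mathbf u_\lambda,\mathbf u_\lambda)=2\nu\|\boldsymbol\varepsilon(\mathbf u_\lambda)\|_{\mathbb R^2}^2$ closes the estimate.

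There is no serious obstacle here; the statement is a two--dimensional transcription of results already in hand, and the points that need attention are minor. First, one must verify that $\mathbf n$ genuinely belongs to the reduced density space $\mathbf H^{-1/2}_0(\Gamma)$, so that the kernel and range assertions concern the correct operator (this is exactly Proposition \ref{prop:4.2}(b),(c)). Second, in the coercivity step one must invoke only the cheap bound $\|\boldsymbol\varepsilon(\mathbf v)\|_{\mathbb R^2}\le\|\mathrm D\mathbf v\|_{\mathbb R^2}\le\|\mathbf v\|_{1,\rho,\mathbb R^2}$ for $\gamma^\dagger\boldsymbol\xi$, since $\|\boldsymbol\varepsilon(\cdot)\|_{\mathbb R^2}$ is a norm only on $\mathbf W_\star(\mathbb R^2)$ (Corollary \ref{cor:2.2}), not on all of $\mathbf W(\mathbb R^2)$. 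Third, \eqref{eq:2.A1} is genuinely needed in the kernel argument to discard the rotational component of $\mathcal M_2$, so that $\boldsymbol\varepsilon(\mathbf u_\lambda)=0$ really leaves only a constant velocity, which then disappears from $\jump{\mathbf t(\mathbf u_\lambda,p_\lambda)}$; this is precisely the variational shadow of the Stokes paradox mentioned at the start of Section \ref{sec:2d}.
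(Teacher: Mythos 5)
Your proposal is correct and follows exactly the route the paper intends: its proof of this proposition is the single remark that the arguments of Propositions \ref{prop:5.5} and \ref{prop:5.7} remain valid, and your write-up is precisely that transcription, with the right two-dimensional adjustments (testing \eqref{eq:9.3} with the other potential, the cheap bound $\|\boldsymbol\varepsilon(\gamma^\dagger\boldsymbol\xi)\|_{\mathbb R^2}\le\|\gamma^\dagger\boldsymbol\xi\|_{1,\rho,\mathbb R^2}$, and the observation that the constant velocity field surviving from $\boldsymbol\varepsilon(\mathbf u_\lambda)=0$ contributes nothing to the normal stress, so the kernel is still $\mathrm{span}\{\mathbf n\}$).
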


\begin{proof} The proof of Propositions \ref{prop:5.5} and \ref{prop:5.7} (same results in the three dimensional case) are valid.
\end{proof}

\paragraph{The range of $\mathrm V$.} Identifying the range of $\mathrm V$ requires some additional work, given the fact that we have been defining $\mathrm V$ on $\mathbf H^{-1/2}_0(\Gamma)$, as opposed to having it defined on the entire $\mathbf H^{-1/2}(\Gamma)$. To do that we consider the solution of the coercive problems ($j=1,2$)
\[
\left[\begin{array}{cc} \boldsymbol\phi_{\mathrm{eq}}^j\in \mathbf H^{-1/2}(\Gamma), \qquad\langle \boldsymbol\phi_{\mathrm{eq}}^j,\mathbf e_j\rangle_\Gamma=1,\\[1.3ex]
\langle \boldsymbol\phi_{\mathrm{eq}}^j,\mathrm V\boldsymbol\mu\rangle_\Gamma = 0 \qquad \forall \boldsymbol\mu\in \mathbf H^{-1/2}(\Gamma),
\end{array}\right.
\]
where $\{\mathbf e_1,\mathbf e_2\}$ is the canonical basis of $\mathbb R^2$. These densities are a basis of the space of equilibrium distributions for the Stokes problem (see \cite{DoSa:2006} and \cite{DiKaMa:2011}). They are the Stokes equivalent of the electrostatic equilibrium distribution that leads to the definition of logarithmic capacity. Using a coercivity argument (see the proof of Proposition \ref{prop:5.7}), it is easy to see that 
\[
\mathrm V:\mathbf H^{-1/2}_0(\Gamma)\cap \mathbf H^{-1/2}_m(\Gamma) \longrightarrow \mathrm{span}\,\{\mathbf n,\boldsymbol\phi_{\mathrm{eq}}^1,\boldsymbol\phi_{\mathrm{eq}}^2\}^\circ
\]
is invertible.

\begin{proposition}[Properties of $\mathrm W$] 
For all $\boldsymbol\varphi,\boldsymbol\psi\in \mathbf H^{1/2}(\Gamma)$,
\begin{equation}\label{eq:10.3}
\langle \mathrm W\boldsymbol\varphi,\boldsymbol\psi\rangle_\Gamma = \langle \mathrm W\boldsymbol\psi,\boldsymbol\varphi\rangle_\Gamma, \qquad \langle \mathrm W\boldsymbol\varphi, \boldsymbol\varphi\rangle_\Gamma\ge 0
\end{equation}
and there exists $C_\Gamma$ such that
\[
\langle \mathrm W\boldsymbol\varphi,\boldsymbol\varphi\rangle_\Gamma \ge C_\Gamma\|\boldsymbol\varphi\|_{1/2,\Gamma}^2 \qquad \forall \boldsymbol\varphi\in \mathbf H^{1/2}_{\mathcal M}(\Gamma):=\{\boldsymbol\xi\,:\,\int_\Gamma \boldsymbol\xi\cdot\mathbf c=0\quad\forall \mathbf c\in \mathcal M_\Gamma\}.
\]
Also
\[
\mathrm{Ker}\,\mathrm W =\mathcal M_\Gamma \qquad \mbox{and} \qquad \mathrm{Range}\, \mathrm W  = \mathcal M_\Gamma^\circ.
\]
\end{proposition}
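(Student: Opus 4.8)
I would prove this two-dimensional statement by transplanting the three-dimensional arguments of Propositions~\ref{prop:6.5} and~\ref{prop:6.6}; the only genuinely new feature is that $\mathbf W(\Omega_+)$ now contains constant fields and that the double layer potential carries the grounding condition through $\boldsymbol\jmath$. First I would establish the energy identity. Put $(\mathbf u_\varphi,p_\varphi):=(\mathrm D_u\boldsymbol\varphi,\mathrm D_p\boldsymbol\varphi)$ and $\mathbf u_\psi:=\mathrm D_u\boldsymbol\psi$; since $\jump{\mathbf t(\mathbf u_\varphi,p_\varphi)}=\mathbf 0$ we have $\mathbf t^+(\mathbf u_\varphi,p_\varphi)=\mathbf t^-(\mathbf u_\varphi,p_\varphi)=-\mathrm W\boldsymbol\varphi$, while $\boldsymbol\psi=-\jump{\gamma\mathbf u_\psi}$. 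Writing
\[
\langle\mathrm W\boldsymbol\varphi,\boldsymbol\psi\rangle_\Gamma=\langle\mathbf t^-(\mathbf u_\varphi,p_\varphi),\gamma^-\mathbf u_\psi\rangle_\Gamma-\langle\mathbf t^+(\mathbf u_\varphi,p_\varphi),\gamma^+\mathbf u_\psi\rangle_\Gamma
\]
and inserting the definitions of the interior and exterior Stokes normal stresses (the exterior one tested against $\mathbf u_\psi|_{\Omega_+}\in\mathbf W(\Omega_+)$, which is permitted) together with $\mathrm{div}\,\mathbf u_\psi=0$, one obtains $\langle\mathrm W\boldsymbol\varphi,\boldsymbol\psi\rangle_\Gamma=a_{\mathbb R^2\setminus\Gamma}(\mathbf u_\varphi,\mathbf u_\psi)$. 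This yields the symmetry and positive semidefiniteness in \eqref{eq:10.3}, and in particular $\langle\mathrm W\boldsymbol\varphi,\boldsymbol\varphi\rangle_\Gamma=2\nu\|\boldsymbol\varepsilon(\mathbf u_\varphi)\|_{\mathbb R^2\setminus\Gamma}^2$.

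For the kernel, $\mathcal M_\Gamma\subset\mathrm{Ker}\,\mathrm W$ is recorded in \eqref{eq:9.5}. Conversely, $\mathrm W\boldsymbol\varphi=\mathbf 0$ forces $\langle\mathrm W\boldsymbol\varphi,\boldsymbol\varphi\rangle_\Gamma=0$, hence $\boldsymbol\varepsilon(\mathbf u_\varphi)=0$ in $\mathbb R^2\setminus\Gamma$; then Lemma~\ref{lemma:rigid}(a) gives $\mathbf u_\varphi|_{\Omega_-}\in\mathcal M$ and Lemma~\ref{prop:8.4b} together with \eqref{eq:2.30} gives $\mathbf u_\varphi|_{\Omega_+}\in\boldsymbol P_0(\Omega_+)$ --- precisely the place where the plane differs from space, where this exterior constant would be forced to vanish. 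Hence $\boldsymbol\varphi=-\jump{\gamma\mathbf u_\varphi}=\gamma^+\mathbf u_\varphi-\gamma^-\mathbf u_\varphi$ is a difference of an element of $\boldsymbol P_0(\Gamma)\subset\mathcal M_\Gamma$ and an element of $\mathcal M_\Gamma$, so $\boldsymbol\varphi\in\mathcal M_\Gamma$. The inclusion $\mathrm{Range}\,\mathrm W\subset\mathcal M_\Gamma^\circ$ is the computation from Proposition~\ref{prop:6.5}: for $\mathbf c\in\mathcal M$ one has $\boldsymbol\varepsilon(\mathbf c)=0$ and $\mathrm{div}\,\mathbf c=0$, so
\[
\langle\mathrm W\boldsymbol\varphi,\mathbf c\rangle_\Gamma=-\langle\mathbf t^-(\mathbf u_\varphi,p_\varphi),\gamma\mathbf c\rangle_\Gamma=-2\nu(\boldsymbol\varepsilon(\mathbf u_\varphi),\boldsymbol\varepsilon(\mathbf c))_{\Omega_-}+(p_\varphi,\mathrm{div}\,\mathbf c)_{\Omega_-}=0.
\]

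The coercivity estimate is the step I expect to cost real work. Following Proposition~\ref{prop:6.6}, but inserting the $\boldsymbol\jmath$-term that is the two-dimensional analogue of the gap between parts (a) and (b) of Proposition~\ref{prop:2.8}, I would show --- via Proposition~\ref{prop:2.8}, Korn's inequality in $\Omega_-$, and a compactness argument --- that
\[
|\!|\!|\mathbf u|\!|\!|^2:=\|\boldsymbol\varepsilon(\mathbf u)\|_{\mathbb R^2\setminus\Gamma}^2+|\boldsymbol\jmath(\mathbf u)|^2+\sum_{\ell=1}^{3}\Big|\int_\Gamma\jump{\gamma\mathbf u}\cdot\mathbf c_\ell\Big|^2,\qquad\mathcal M=\mathrm{span}\{\mathbf c_1,\mathbf c_2,\mathbf c_3\},
\]
is an equivalent norm on $\mathbf W(\mathbb R^2\setminus\Gamma)$; the only thing to check is that on the kernel of the seminorm $\|\boldsymbol\varepsilon(\punto)\|_{\mathbb R^2\setminus\Gamma}$ --- pairs consisting of a rigid motion in $\Omega_-$ and a constant in $\Omega_+$ --- the listed five functionals are linearly independent, which holds because $\Xi\subset\Omega_+$ makes $\boldsymbol\jmath$ detect the exterior constant and the Gram matrix $\big(\int_\Gamma\mathbf c_k\cdot\mathbf c_\ell\big)_{k,\ell}$ is nonsingular. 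Given this, let $\boldsymbol\varphi\in\mathbf H^{1/2}_{\mathcal M}(\Gamma)$ and $\mathbf u_\varphi:=\mathrm D_u\boldsymbol\varphi$; subtracting the constant $|\Xi|^{-1}\boldsymbol\jmath(\mathbf u_\varphi)$ from $\mathbf u_\varphi$ leaves both $\boldsymbol\varepsilon(\mathbf u_\varphi)$ and $\jump{\gamma\mathbf u_\varphi}=-\boldsymbol\varphi$ untouched while killing $\boldsymbol\jmath$, and since also $\int_\Gamma\jump{\gamma\mathbf u_\varphi}\cdot\mathbf c_\ell=-\int_\Gamma\boldsymbol\varphi\cdot\mathbf c_\ell=0$, the triple-bar norm of this shifted field collapses to $\|\boldsymbol\varepsilon(\mathbf u_\varphi)\|_{\mathbb R^2\setminus\Gamma}$. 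Therefore
\[
\|\boldsymbol\varphi\|_{1/2,\Gamma}\le\|\gamma^-\mathbf u_\varphi\|_{1/2,\Gamma}+\|\gamma^+\mathbf u_\varphi\|_{1/2,\Gamma}\le C\,\|\mathbf u_\varphi-|\Xi|^{-1}\boldsymbol\jmath(\mathbf u_\varphi)\|_{1,\rho,\mathbb R^2\setminus\Gamma}\le C\,\|\boldsymbol\varepsilon(\mathbf u_\varphi)\|_{\mathbb R^2\setminus\Gamma},
\]
and combining with $\langle\mathrm W\boldsymbol\varphi,\boldsymbol\varphi\rangle_\Gamma=2\nu\|\boldsymbol\varepsilon(\mathbf u_\varphi)\|_{\mathbb R^2\setminus\Gamma}^2$ gives the coercivity bound; then $\mathrm{Range}\,\mathrm W=\mathcal M_\Gamma^\circ$ follows as at the end of the proof of Proposition~\ref{prop:5.7}, using the stable splitting $\mathbf H^{1/2}(\Gamma)=\mathbf H^{1/2}_{\mathcal M}(\Gamma)\oplus\mathcal M_\Gamma$. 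The main difficulty, then, is threading $\boldsymbol\jmath$ through the norm-equivalence so that $|\!|\!|\punto|\!|\!|$ is genuinely a norm on $\mathbf W(\mathbb R^2\setminus\Gamma)$, and recognizing that shifting the double-layer potential by a constant --- harmless for both the strain and the trace jump --- lets one discard the $\boldsymbol\jmath$-contribution exactly on $\mathbf H^{1/2}_{\mathcal M}(\Gamma)$.
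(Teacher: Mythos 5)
Your proof is correct and follows the paper's blueprint (transplant Propositions \ref{prop:6.5} and \ref{prop:6.6}, with Lemma \ref{prop:8.4b} and \eqref{eq:2.30} supplying the exterior constants in the kernel argument), but it diverges from the paper in the one place where the two-dimensional case genuinely requires a decision: which extra functional to put into the equivalent norm on $\mathbf W(\mathbb R^2\setminus\Gamma)$. The paper augments $\|\boldsymbol\varepsilon(\punto)\|_{\mathbb R^2\setminus\Gamma}$ and the moments $\int_\Gamma\jump{\gamma\punto}\cdot\mathbf c_\ell$ with the combination $\boldsymbol\jmath(\mathbf u)+\boldsymbol\ell_{\mathrm D}(\jump{\gamma\mathbf u})$, which vanishes \emph{automatically} on $\mathbf u_\varphi=\mathrm D_u\boldsymbol\varphi$ because of the grounding condition \eqref{eq:9.52} with $\boldsymbol\lambda=\mathbf 0$; no further manipulation of the potential is then needed. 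You instead use the plain term $|\boldsymbol\jmath(\mathbf u)|^2$ and compensate by subtracting the constant $|\Xi|^{-1}\boldsymbol\jmath(\mathbf u_\varphi)$, which lies in $\mathbf W(\mathbb R^2)$ precisely because $d=2$, and which changes neither $\boldsymbol\varepsilon(\mathbf u_\varphi)$ nor $\jump{\gamma\mathbf u_\varphi}=-\boldsymbol\varphi$. Both routes work: your check that the five functionals separate the kernel of the strain seminorm (constant detected by $\boldsymbol\jmath$ since $\Xi\subset\Omega_+$, rigid motion by the nonsingular Gram matrix of $\mathcal M_\Gamma$) is exactly what is needed, and your variant has the small advantage of not invoking the specific grounding functional $\boldsymbol\ell_{\mathrm D}$, making the coercivity estimate visibly independent of how the potential was grounded; the paper's choice is slightly more economical because the grounding condition does the work for free. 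One display in your argument needs a cosmetic repair: in the chain bounding $\|\boldsymbol\varphi\|_{1/2,\Gamma}$ you should take the interior and exterior traces of the \emph{shifted} field $\mathbf u_\varphi-|\Xi|^{-1}\boldsymbol\jmath(\mathbf u_\varphi)$, not of $\mathbf u_\varphi$ itself (the shift alters the individual traces but not their jump, so $\boldsymbol\varphi=-\jump{\gamma(\mathbf u_\varphi-|\Xi|^{-1}\boldsymbol\jmath(\mathbf u_\varphi))}$ and the estimate goes through verbatim). Everything else --- the energy identity $\langle\mathrm W\boldsymbol\varphi,\boldsymbol\psi\rangle_\Gamma=a_{\mathbb R^2\setminus\Gamma}(\mathbf u_\varphi,\mathbf u_\psi)$, the kernel and range-inclusion computations, and the identification of the range from coercivity via the splitting $\mathbf H^{1/2}(\Gamma)=\mathbf H^{1/2}_{\mathcal M}(\Gamma)\oplus\mathcal M_\Gamma$ --- matches the paper's proof.
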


\begin{proof} We follow the proof of Proposition \ref{prop:6.5}. 
Let $(\mathbf u_\varphi,p_\varphi):=(\mathrm D_u\boldsymbol\varphi,\mathrm D_p\boldsymbol\varphi)$ and $\mathbf u_\psi:=\mathrm D_u\boldsymbol\psi$. Then we can prove that
\begin{equation}\label{eq:9.7}
\langle\mathrm W\boldsymbol\varphi,\boldsymbol\psi\rangle_\Gamma = a_{\mathbb R^2\setminus\Gamma}(\mathbf u_\varphi,\mathbf u_\psi),
\end{equation}
which implies symmetry and positive semi-definiteness \eqref{eq:10.3}. 

Formula \eqref{eq:9.5} shows that $\mathcal M_\Gamma\subset \mathrm{Ker}\,\mathrm W$.
If $\mathrm W\boldsymbol\varphi=\mathbf 0$, then, by \eqref{eq:9.7}, it follows that $\boldsymbol\varepsilon(\mathbf u_\varphi)=0$ in $\mathbb R^2\setminus\Gamma$. By Lemmas \ref{lemma:rigid} and \ref{prop:8.4b}, it follows that $\mathbf u_\varphi|_{\Omega_-}\in \mathcal M$ and $\mathbf u_\varphi|_{\Omega_+}\in \boldsymbol P_0(\Omega_+)$. Therefore $\boldsymbol\varphi=-\jump{\gamma\mathbf u_\varphi} \in \mathcal M_\Gamma$. 
The remainder of the proof of Proposition \ref{prop:6.5} can be applied verbatim.

To prove coercivity, we proceed as in the proof of Proposition \ref{prop:6.6}. We first need to prove that
\[
|\!|\!|\mathbf u|\!|\!|^2:=\|\boldsymbol\varepsilon(\mathbf u)\|_{\mathbb R^2\setminus\Gamma}^2+|\boldsymbol\jmath(\mathbf u)+\boldsymbol\ell_{\mathrm D}(\jump{\gamma\mathbf u})|^2+\sum_{\ell=1}^3 \Big| \int_\Gamma \jump{\gamma\mathbf u}\cdot\mathbf c_\ell\Big|^2,\quad \mbox{where } \mathcal M=\mathrm{span}\{\mathbf c_1,\mathbf c_2,\mathbf c_3\},
\]
is a norm in $\mathbf W(\mathbb R^2\setminus\Gamma)$. This can be done using Lemmas \ref{lemma:rigid} and \ref{prop:8.4b}.
Using Korn's inequality and a compactness argument, we can show that this is an equivalent norm in $\mathbf W(\mathbb R^2\setminus\Gamma)$. However, $|\!|\!|\mathbf u|\!|\!|=\|\boldsymbol\varepsilon(\mathbf u_\varphi)\|_{\mathbb R^2\setminus\Gamma}$ if $\boldsymbol\varphi\in \mathbf H^{1/2}_{\mathcal M}(\Gamma)$. The remainder of the proof of Proposition \ref{prop:6.6} can be then applied to show coercivity.

Finally, using coercivity and the fact that $\mathrm{Range}\,\mathrm W \subset \mathcal M_\Gamma^\circ$, it is easy to see that both sets are equal.
\end{proof}

\begin{proposition}[$\mathrm K^t$ is the transpose of $\mathrm K$]\label{prop:10.11} 
\[
\langle \boldsymbol\lambda,\mathrm K\boldsymbol\varphi\rangle_\Gamma= \langle \mathrm K^t\boldsymbol\lambda,\boldsymbol\varphi\rangle_\Gamma \qquad \forall \boldsymbol\lambda\in \mathbf H^{-1/2}(\Gamma), \quad \boldsymbol\varphi\in \mathbf H^{1/2}(\Gamma).
\]
\end{proposition}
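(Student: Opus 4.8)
The plan is to transcribe, essentially line by line, the proof of Proposition~\ref{prop:6.8}: the transposition identity will fall out of an orthogonality property of single and double layer potentials with respect to $a_{\mathbb R^2\setminus\Gamma}(\punto,\punto)$, which is the two-dimensional counterpart of Proposition~\ref{prop:6.7}. Fix $\boldsymbol\lambda\in\mathbf H^{-1/2}_0(\Gamma)$ (the space on which $\mathrm S_u$, and hence $\mathrm K^t$, is defined) and $\boldsymbol\varphi\in\mathbf H^{1/2}(\Gamma)$, and write $(\mathbf u_\lambda,p_\lambda):=(\mathrm S_u\boldsymbol\lambda,\mathrm S_p\boldsymbol\lambda)$, $(\mathbf u_\varphi,p_\varphi):=(\mathrm D_u\boldsymbol\varphi,\mathrm D_p\boldsymbol\varphi)$.

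The first step is the orthogonality $a_{\mathbb R^2\setminus\Gamma}(\mathbf u_\varphi,\mathbf u_\lambda)=0$. By the jump relations, $\jump{\gamma\mathbf u_\lambda}=\mathbf 0$, so $\mathbf u_\lambda\in\mathbf W(\mathbb R^2)$, and the divergence constraint in \eqref{eq:9.3} gives $\mathrm{div}\,\mathbf u_\lambda=0$; hence $\mathbf u_\lambda$ is a legitimate test function $\mathbf v$ in the variational equation of \eqref{eq:9.3} that $(\mathbf u_\varphi,p_\varphi)$ satisfies (it is homogeneous because the $\boldsymbol\lambda$-datum of the double layer is zero), which then yields $a_{\mathbb R^2\setminus\Gamma}(\mathbf u_\varphi,\mathbf u_\lambda)-(p_\varphi,\mathrm{div}\,\mathbf u_\lambda)_{\mathbb R^2}=0$, and the last term vanishes. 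The one feature special to the two-dimensional case deserves a remark: this argument is insensitive to the grounding condition $\boldsymbol\jmath(\mathbf u)=\boldsymbol\ell_{\mathrm S}(\boldsymbol\lambda)-\boldsymbol\ell_{\mathrm D}(\boldsymbol\varphi)$ of \eqref{eq:9.3}, since it uses only that $\mathbf u_\lambda$ belongs to $\mathbf W(\mathbb R^2)$ and is solenoidal; the grounding is what pins down a unique representative, but it does not enter the identity.

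The second step is to rerun the calculation of Proposition~\ref{prop:6.8} verbatim. Using the definitions of the interior and exterior Stokes normal stresses, $\mathrm{div}\,\mathbf u_\varphi=0$, and the orthogonality just established,
\[
\langle\mathbf t^-(\mathbf u_\lambda,p_\lambda),\gamma^-\mathbf u_\varphi\rangle_\Gamma=a_{\Omega_-}(\mathbf u_\lambda,\mathbf u_\varphi)=-a_{\Omega_+}(\mathbf u_\lambda,\mathbf u_\varphi)=\langle\mathbf t^+(\mathbf u_\lambda,p_\lambda),\gamma^+\mathbf u_\varphi\rangle_\Gamma.
\]
Adding and subtracting rewrites this as $\langle\mathbf t^+(\mathbf u_\lambda,p_\lambda),-\jump{\gamma\mathbf u_\varphi}\rangle_\Gamma=\langle\jump{\mathbf t(\mathbf u_\lambda,p_\lambda)},\gamma^-\mathbf u_\varphi\rangle_\Gamma$, and inserting the two-dimensional jump relations $\jump{\mathbf t(\mathbf u_\lambda,p_\lambda)}=\boldsymbol\lambda$, $\mathbf t^+(\mathbf u_\lambda,p_\lambda)=-\smallfrac12\boldsymbol\lambda+\mathrm K^t\boldsymbol\lambda$, $\jump{\gamma\mathbf u_\varphi}=-\boldsymbol\varphi$, $\gamma^-\mathbf u_\varphi=-\smallfrac12\boldsymbol\varphi+\mathrm K\boldsymbol\varphi$ and cancelling the two $\smallfrac12\langle\boldsymbol\lambda,\boldsymbol\varphi\rangle_\Gamma$ terms leaves precisely $\langle\mathrm K^t\boldsymbol\lambda,\boldsymbol\varphi\rangle_\Gamma=\langle\boldsymbol\lambda,\mathrm K\boldsymbol\varphi\rangle_\Gamma$. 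As in three dimensions, this exhibits the transposition theorem as a disguised form of the orthogonality (``separation of energies'') of the two potentials.

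I do not anticipate a real obstacle here. The only thing requiring attention is organizational bookkeeping: $\mathrm S_u$ and $\mathrm K^t$ are only defined on $\mathbf H^{-1/2}_0(\Gamma)$, so the identity is proved there, which is exactly the content the pairing $\langle\mathrm K^t\boldsymbol\lambda,\boldsymbol\varphi\rangle_\Gamma$ can carry ($\mathrm K$ being bounded on all of $\mathbf H^{1/2}(\Gamma)$); and one must make sure --- as noted above --- that the grounding condition built into the two-dimensional potentials does not spoil the test-function argument producing the orthogonality. Neither point is serious, in line with the remark in the section preamble that most of its proofs are straightforward copies of the three-dimensional arguments.
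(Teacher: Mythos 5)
Your proof is correct and follows the paper's own argument: the paper likewise first establishes the orthogonality $a_{\mathbb R^2\setminus\Gamma}(\mathrm D_u\boldsymbol\varphi,\mathrm S_u\boldsymbol\lambda)=0$ by using $\mathrm S_u\boldsymbol\lambda\in\mathbf W(\mathbb R^2)$ as a test function in \eqref{eq:9.3}, and then repeats the trace/stress computation of Proposition \ref{prop:6.8}. Your added remarks — that the grounding condition does not affect the test-function step and that the identity lives on $\mathbf H^{-1/2}_0(\Gamma)$, the domain of $\mathrm K^t$ in two dimensions — are accurate clarifications of points the paper leaves implicit.
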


\begin{proof} 
We first prove that single and double layer potentials are orthogonal with respect to the semi-inner product $a_{\mathbb R^2\setminus\Gamma}(\punto,\punto)$, that is,
\[
\mathbf u=\mathrm S_u\boldsymbol\lambda, \quad \mathbf v=\mathrm D_u\boldsymbol\varphi \quad \Longrightarrow\quad
a_{\mathbb R^2\setminus\Gamma}(\mathbf u,\mathbf v)=0.
\]
This is done by reversing the roles of $\mathbf u$ and $\mathbf v$ in \eqref{eq:9.3}, given the fact that $\mathrm S_u\boldsymbol\lambda\in \mathbf W(\mathbb R^2)$ can be used as a test function.
The remainder of the argument follows the proof of Proposition \ref{prop:6.8}.
\end{proof}

\begin{proposition}[Representation formula]\label{prop:11.2}
Let $\mathbf u\in \mathbf W(\mathbb R^2\setminus\Gamma)$ and $p\in L^2(\mathbb R^2)$ satisfy
\begin{alignat*}{4}
 -2\nu \mathrm{div}\,\boldsymbol\varepsilon(\mathbf u)+\nabla p = \mathbf 0 & \qquad & \mbox{in $\mathbb R^2\setminus\Gamma$},\\
 \mathrm{div}\,\mathbf u = 0 & & \mbox{in $\mathbb R^2\setminus\Gamma$}.
\end{alignat*}
Then there exists $\mathbf u_\infty\in \boldsymbol P_0(\mathbb R^2)$ such that
\begin{equation}
\mathbf u = \mathbf u_\infty+ \mathrm S_u \jump{\mathbf t(\mathbf u,p)}-\mathrm D_u \jump{\gamma\mathbf u}\quad\mbox{and}\quad
p = \mathrm S_p \jump{\mathbf t(\mathbf u,p)}-\mathrm D_p \jump{\gamma\mathbf u}.
\end{equation}
\end{proposition}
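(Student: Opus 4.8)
The plan is to follow the three-dimensional argument of Proposition~\ref{prop:7.2} almost verbatim, the one genuinely new ingredient being the verification that the Cauchy datum fed into the single layer potential lies in the reduced space $\mathbf H^{-1/2}_0(\Gamma)$ on which $\mathrm S_u$ and $\mathrm S_p$ have been defined in two dimensions.

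First I would set $\boldsymbol\lambda:=\jump{\mathbf t(\mathbf u,p)}$ and $\boldsymbol\varphi:=\jump{\gamma\mathbf u}\in\mathbf H^{1/2}(\Gamma)$. Since $(\mathbf u,p)$ is exactly of the type considered in Proposition~\ref{prop:4.2}, parts (c) and (d) of that proposition give $\mathbf t^\pm(\mathbf u,p)\in\mathbf H^{-1/2}_0(\Gamma)$, hence $\boldsymbol\lambda\in\mathbf H^{-1/2}_0(\Gamma)$, so that $\mathrm S_u\boldsymbol\lambda$ and $\mathrm S_p\boldsymbol\lambda$ are well defined. I would then introduce $\mathbf v:=\mathrm S_u\boldsymbol\lambda-\mathrm D_u\boldsymbol\varphi\in\mathbf W(\mathbb R^2\setminus\Gamma)$ and $q:=\mathrm S_p\boldsymbol\lambda-\mathrm D_p\boldsymbol\varphi\in L^2(\mathbb R^2)$. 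By the definitions of the two layer potentials together with their jump relations ($\jump{\gamma\mathrm S_u\boldsymbol\lambda}=\mathbf 0$, $\jump{\gamma\mathrm D_u\boldsymbol\varphi}=-\boldsymbol\varphi$, $\jump{\mathbf t(\mathrm S_u\boldsymbol\lambda,\mathrm S_p\boldsymbol\lambda)}=\boldsymbol\lambda$, $\jump{\mathbf t(\mathrm D_u\boldsymbol\varphi,\mathrm D_p\boldsymbol\varphi)}=\mathbf 0$), the pair $(\mathbf v,q)$ is a homogeneous divergence-free Stokes field off $\Gamma$ with $\jump{\gamma\mathbf v}=\boldsymbol\varphi$ and $\jump{\mathbf t(\mathbf v,q)}=\boldsymbol\lambda$; in other words $(\mathbf v,q)$ solves \eqref{eq:9.1} with data $(\boldsymbol\varphi,\boldsymbol\lambda)$. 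On the other hand, by hypothesis and the very choice of $\boldsymbol\varphi$ and $\boldsymbol\lambda$, the given pair $(\mathbf u,p)$ solves the same problem \eqref{eq:9.1}.

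To finish I would invoke Proposition~\ref{prop:AA}, according to which \eqref{eq:9.1} (equivalently \eqref{eq:9.2}) has a solution that is unique up to an element of $\boldsymbol P_0(\mathbb R^2)\times\{0\}$. Applied to the two solutions $(\mathbf u,p)$ and $(\mathbf v,q)$ this yields $p=q$ and $\mathbf u_\infty:=\mathbf u-\mathbf v\in\boldsymbol P_0(\mathbb R^2)$, which is precisely the asserted representation. There is no real analytic obstacle here, since all the work has already been done; the only two points requiring attention --- and the reason this statement is not literally Proposition~\ref{prop:7.2} --- are the use of Proposition~\ref{prop:4.2}(c)--(d) to place $\jump{\mathbf t(\mathbf u,p)}$ in $\mathbf H^{-1/2}_0(\Gamma)$, and the observation that the weakened (up to $\boldsymbol P_0(\mathbb R^2)\times\{0\}$) uniqueness of Proposition~\ref{prop:AA} is exactly what produces --- and makes unavoidable --- the extra constant velocity field $\mathbf u_\infty$. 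This is the variational incarnation of the Stokes paradox anticipated at the beginning of this section.
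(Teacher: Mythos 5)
Your proof is correct and follows essentially the same route as the paper: transfer the three-dimensional argument of Proposition~\ref{prop:7.2}, then replace strict uniqueness by the uniqueness up to $\boldsymbol P_0(\mathbb R^2)\times\{0\}$ guaranteed by Proposition~\ref{prop:AA}, which is exactly what produces $\mathbf u_\infty$. Your explicit check, via Proposition~\ref{prop:4.2}(c)--(d), that $\jump{\mathbf t(\mathbf u,p)}\in\mathbf H^{-1/2}_0(\Gamma)$ so the two-dimensional single layer potential may be applied is a detail the paper leaves implicit, and is a welcome addition.
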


\begin{proof} See the proof of Proposition \ref{prop:7.2}. The argument uses the fact that the unique solutions to \eqref{eq:9.1} with vanishing right hand side are the elements of $\boldsymbol P_0(\mathbb R^2)\times \{0\}$ (Proposition \ref{prop:AA}).
\end{proof}

\section{Review \#2: Laplace and Lam\'e potentials}

In this section we give a fast review of some known properties of the layer potentials for the Laplace and Lam\'e (or Navier, or homogeneous isotropic linear elasticity) equations. The variational approach (with weighted Sobolev spaces, as in Sections \ref{sec:5.1}, \ref{sec:6.1}, and \ref{sec:2d}) can be found in \cite{Nedelec:1971}, although the fine print of relating integral forms (what we will do in the remainder of the paper) is missing. A purely integral theory is developed in \cite{Costabel:1988} and \cite[Chapter 8 \& Chapter 10]{McLean:2000}. Note that the variational theory gives fast and elegant proofs of mapping properties in the basic variational setting but needs some additional work in order to show that the variational definitions correspond to weak integral potentials.

\paragraph{Laplace potentials.} For questions of easiness of reference, we will make copies of the layer potentials for the Laplacian so that they act componentwise on vector-valued densities. The single layer and double layer potentials for the Laplacian are defined for $\mathbf x\in \mathbb R^d\setminus\Gamma$ by 
\[
(\mathrm S^\Delta \boldsymbol\lambda)(\mathbf x):=\langle\mathrm E^\Delta (\mathbf x-\cdot),\boldsymbol\lambda\rangle_\Gamma,\qquad
(\mathrm D^\Delta \boldsymbol\varphi)(\mathbf x):=\int_\Gamma \mathrm T^\Delta(\mathbf x-\mathbf y;\mathbf n(\mathbf y))\,\boldsymbol\varphi(\mathbf y)\mathrm d\Gamma(\mathbf y),
\]
where
\begin{alignat*}{4}
\mathrm E^\Delta(\mathbf r)& :=\frac1{2\pi(d-1)}\psi (r)\mathrm I, & \qquad &\psi(r):=\left\{ \begin{array}{ll} \log r^{-1}, & d=2,\\ 1/r, & d=3,\end{array}\right.
\\
\mathrm T^\Delta(\mathbf r;\mathbf n)&:=\frac1{2\pi(d-1)}\frac1{r^d}(\mathbf r\cdot\mathbf n)\mathrm I,
\end{alignat*}
contain diagonal copies of the Laplacian monopole and dipole distributions, and $r=|\mathbf r|$. For some mapping properties, we will use the spaces $\mathbf H^1_{\mathrm{loc}}(\mathbb R^2)$. These spaces are not normed spaces but are easily seen to be metrizable, by using a sequence of cut-off functions.

\begin{proposition}\label{prop:8.1}
For all $(\boldsymbol\lambda,\boldsymbol\varphi)\in \mathbf H^{-1/2}(\Gamma)\times \mathbf H^{1/2}(\Gamma)$, the function $\mathbf u:=\mathrm S^\Delta\boldsymbol\lambda-\mathrm D^\Delta\boldsymbol\varphi$ is a solution of
\begin{subequations}
\begin{alignat}{4}
 -\Delta \mathbf u =  \mathbf 0 & \qquad & \mbox{in $\mathbb R^d\setminus\Gamma$},\\
 \jump{\gamma  \mathbf u}=\boldsymbol\varphi, & & \\
 \jump{(\mathrm D\mathbf  u)\mathbf n}= \boldsymbol\lambda. & &
\end{alignat}
\end{subequations}
Moreover, the following mappings are continuous:
\begin{equation}
\mathrm D^\Delta:\mathbf H^{1/2}(\Gamma) \to \mathbf W(\mathbb R^d\setminus\Gamma), \qquad \mathrm S^\Delta:\left\{\begin{array}{ll} \mathbf H^{-1/2}(\Gamma) \to \mathbf W(\mathbb R^3), & d=3,\\
\mathbf H^{-1/2}_0(\Gamma) \to \mathbf W(\mathbb R^2), &d=2,\\
\mathbf H^{-1/2}(\Gamma) \to \mathbf H^1_{\mathrm{loc}}(\mathbb R^2), &d=2.
\end{array}\right.
\end{equation}
\end{proposition}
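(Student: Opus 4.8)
The plan is to reduce the statement to the classical scalar theory of the Laplace single and double layer potentials --- as in \cite[Chapter~8]{McLean:2000} --- together with an elementary inspection of the far-field decay of the explicit kernels against the weights $\rho_d$. Because $\mathrm E^\Delta(\mathbf r)$ and $\mathrm T^\Delta(\mathbf r;\mathbf n)$ are scalar kernels times the identity matrix, $\mathrm S^\Delta$ and $\mathrm D^\Delta$ act componentwise, and $(\mathrm D\mathbf u)\mathbf n$ is componentwise the normal derivative $\partial_n u_i$. Hence the differential equation and the jump relations follow at once from the scalar facts in \cite{McLean:2000}: the scalar single and double layer potentials are harmonic in $\mathbb R^d\setminus\Gamma$; the single layer has continuous trace while its normal derivative jumps by $\lambda$; the double layer has continuous normal derivative while its trace jumps by $-\varphi$ (with $\mathbf n$ the exterior unit normal to $\Omega_-$, as throughout, and $\jump{\cdot}=(\cdot)^--(\cdot)^+$). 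Substituting $\mathbf u=\mathrm S^\Delta\boldsymbol\lambda-\mathrm D^\Delta\boldsymbol\varphi$ and combining componentwise gives $-\Delta\mathbf u=\mathbf 0$ in $\mathbb R^d\setminus\Gamma$, $\jump{\gamma\mathbf u}=\boldsymbol\varphi$ and $\jump{(\mathrm D\mathbf u)\mathbf n}=\boldsymbol\lambda$; the only thing requiring attention here is keeping the orientation conventions straight so that these jumps come out with a $+$ sign.

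For the mapping properties, continuity of $\mathrm S^\Delta$ and $\mathrm D^\Delta$ into $\mathbf H^1$ of every bounded Lipschitz subdomain of $\mathbb R^d\setminus\Gamma$ --- hence into $\mathbf H^1_{\mathrm{loc}}(\mathbb R^2)$, which already disposes of the last case for $d=2$ and controls the behaviour near $\Gamma$ in every case --- is the classical boundedness statement in \cite[Chapter~8]{McLean:2000}. What remains is the behaviour at infinity. Fix $R$ with $\overline{\Omega_-}\subset B:=B(\mathbf 0;R)$. For $|\mathbf x|\ge 2R$ and $\mathbf y\in\Gamma$, homogeneity of the kernel gives $|\mathrm T^\Delta(\mathbf x-\mathbf y;\mathbf n)|\lesssim|\mathbf x|^{-(d-1)}$ and $|\nabla_{\mathbf x}\mathrm T^\Delta(\mathbf x-\mathbf y;\mathbf n)|\lesssim|\mathbf x|^{-d}$, so (using $\mathbf H^{1/2}(\Gamma)\hookrightarrow\mathbf L^2(\Gamma)$) $|\mathrm D^\Delta\boldsymbol\varphi(\mathbf x)|\lesssim|\mathbf x|^{-(d-1)}\|\boldsymbol\varphi\|_{1/2,\Gamma}$ and $|\nabla\mathrm D^\Delta\boldsymbol\varphi(\mathbf x)|\lesssim|\mathbf x|^{-d}\|\boldsymbol\varphi\|_{1/2,\Gamma}$. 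For $d=3$, where $\rho_3\sim|\mathbf x|^{-1}$, this is more than enough to put $\mathrm D^\Delta\boldsymbol\varphi$ in $\mathbf W(\mathbb R^3\setminus\Gamma)$ with the stated bound; for $d=2$, where $\rho_2\sim((\log|\mathbf x|)\,|\mathbf x|)^{-1}$, the decays $O(|\mathbf x|^{-1})$ and $O(|\mathbf x|^{-2})$ are enough, since $\int_R^\infty(\log r)^{-2}r^{-3}\,\mathrm dr<\infty$ and $\int_R^\infty r^{-3}\,\mathrm dr<\infty$. Thus $\mathrm D^\Delta:\mathbf H^{1/2}(\Gamma)\to\mathbf W(\mathbb R^d\setminus\Gamma)$ is bounded.

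The single layer is handled the same way, now using $|\mathrm S^\Delta\boldsymbol\lambda(\mathbf x)|\le\|\mathrm E^\Delta(\mathbf x-\cdot)\|_{1/2,\Gamma}\|\boldsymbol\lambda\|_{-1/2,\Gamma}$ and the analogous gradient bound. For $d=3$ one has $|\mathrm E^\Delta(\mathbf x-\mathbf y)|\lesssim|\mathbf x|^{-1}$ and $|\nabla_{\mathbf x}\mathrm E^\Delta(\mathbf x-\mathbf y)|\lesssim|\mathbf x|^{-2}$, hence $\mathrm S^\Delta\boldsymbol\lambda\in\mathbf W(\mathbb R^3)$ with the stated bound. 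For $d=2$ I would split $\boldsymbol\lambda=\boldsymbol\lambda_0+\mathbf e$ with $\boldsymbol\lambda_0\in\mathbf H^{-1/2}_0(\Gamma)$ and $\mathbf e\in\boldsymbol P_0(\Gamma)$, using $\mathbf H^{-1/2}(\Gamma)=\mathbf H^{-1/2}_0(\Gamma)\oplus\boldsymbol P_0(\Gamma)$: the leading term of $\mathrm S^\Delta\boldsymbol\lambda_0$ at infinity, proportional to $\log|\mathbf x|$ with coefficient $(\langle\boldsymbol\lambda_0,\mathbf e_1\rangle_\Gamma,\langle\boldsymbol\lambda_0,\mathbf e_2\rangle_\Gamma)$, vanishes by \eqref{eq:3.A3}, so subtracting the constant matrix $\mathrm E^\Delta(\mathbf x)$ inside the pairing yields $|\mathrm S^\Delta\boldsymbol\lambda_0(\mathbf x)|\lesssim|\mathbf x|^{-1}\|\boldsymbol\lambda_0\|_{-1/2,\Gamma}$ and $|\nabla\mathrm S^\Delta\boldsymbol\lambda_0(\mathbf x)|\lesssim|\mathbf x|^{-2}\|\boldsymbol\lambda_0\|_{-1/2,\Gamma}$, which lands in $\mathbf W(\mathbb R^2)$ exactly as for the double layer; the remaining piece $\mathrm S^\Delta\mathbf e$ grows like $\log|\mathbf x|$ and is only in $\mathbf H^1_{\mathrm{loc}}(\mathbb R^2)$. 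This proves $\mathrm S^\Delta:\mathbf H^{-1/2}_0(\Gamma)\to\mathbf W(\mathbb R^2)$, while $\mathrm S^\Delta:\mathbf H^{-1/2}(\Gamma)\to\mathbf H^1_{\mathrm{loc}}(\mathbb R^2)$ is the $d=2$ instance of the local continuity quoted above. In every case the operator bound follows by combining the near-$\Gamma$ estimate from \cite{McLean:2000} with the density-norm bounds just recorded.

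The genuinely delicate point is precisely this $d=2$ single layer: one must recognize that $\rho_2$ is calibrated so that membership in $\mathbf W(\mathbb R^2)$ demands honest $O(|\mathbf x|^{-1})$ decay rather than tolerating the logarithmic growth of $\mathrm S^\Delta$ of a general density, and that this logarithmic term is annihilated exactly by the constraint $\boldsymbol\lambda\in\mathbf H^{-1/2}_0(\Gamma)$, whereas in $d=3$ the single layer decays fast enough unconditionally. Everything else is either quoted from \cite{McLean:2000} or a routine size estimate on the explicit kernels $\mathrm E^\Delta$ and $\mathrm T^\Delta$.
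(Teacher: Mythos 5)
Your proof is correct, but it cannot be ``the same route as the paper'' for the simple reason that the paper gives no proof of Proposition \ref{prop:8.1}: the surrounding section is explicitly a review, and the statement is quoted from the literature (the integral theory of \cite{Costabel:1988} and \cite[Chapter 8]{McLean:2000} for the equation and the jump relations, and the weighted-space tradition of \cite{Nedelec:1971} and the Giroire--Hanouzet spaces for the behaviour at infinity). What you wrote is therefore a self-contained reconstruction, and it is sound: the componentwise reduction to the scalar layer potentials settles $-\Delta\mathbf u=\mathbf 0$ and both jump relations (with the paper's convention $\jump{\cdot}=(\cdot)^--(\cdot)^+$ your signs come out right), the local $\mathbf H^1$ continuity is indeed exactly what \cite{McLean:2000} provides, and your far-field kernel estimates supply the part that is genuinely absent from that reference, namely membership in $\mathbf W(\mathbb R^d\setminus\Gamma)$, including the decisive two-dimensional observation that the $\log|\mathbf x|$ contribution of the single layer is annihilated precisely when $\boldsymbol\lambda\in\mathbf H^{-1/2}_0(\Gamma)$ --- which is exactly why the paper restricts the $d=2$ statement to that subspace, in parallel with Propositions \ref{prop:8.2}, \ref{prop:11.1} and \ref{prop:11.2}. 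What the paper's citation-only treatment buys is brevity and consistency with its stated goal of borrowing the Laplace/Lam\'e theory wholesale; what your derivation buys is an explicit bridge between McLean's $\mathbf H^1_{\mathrm{loc}}$ statements and the weighted spaces the rest of the paper lives in. Two details worth making explicit if you write this up: since $\boldsymbol\lambda$ lies only in $\mathbf H^{-1/2}(\Gamma)$, the decay estimates for the single layer must control the $H^{1/2}(\Gamma)$-norm in $\mathbf y$ of the kernels $\mathrm E^\Delta(\mathbf x-\cdot)$ and of their shifted versions, not just sup-norms (this follows from the same homogeneity bounds applied to one additional $\mathbf y$-derivative); and in the gradient estimate for $\mathrm S^\Delta\boldsymbol\lambda_0$ the object subtracted inside the pairing is $\nabla_{\mathbf x}\mathrm E^\Delta(\mathbf x)$ rather than $\mathrm E^\Delta(\mathbf x)$ itself --- any function constant in $\mathbf y$ is killed by the moment condition \eqref{eq:3.A3}. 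Neither point affects the validity of the argument.
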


\paragraph{Lam\'e potentials.} For reasons that will become apparent at the time of comparing potentials with the Stokes problem, all Lam\'e potentials will be written in terms of one of the Lam\'e parameters $\mu$ and the non-physical quantity:
\[
A:=\frac{\mu}{\lambda+2\mu}
\]
The layer potentials are defined for $\mathbf x\in \mathbb R^d\setminus\Gamma$ by
\[
(\mathrm S^{\mathrm L} \boldsymbol\lambda)(\mathbf x) :=\langle\mathrm E^{\mathrm L} (\mathbf x-\cdot),\boldsymbol\lambda\rangle_\Gamma,\qquad
(\mathrm D^{\mathrm L} \boldsymbol\varphi)(\mathbf x):=\int_\Gamma \mathrm T^{\mathrm L}(\mathbf x-\mathbf y;\mathbf n(\mathbf y))\,\boldsymbol\varphi(\mathbf y)\mathrm d\Gamma(\mathbf y),
\]
where (see \cite[(2.2.21)]{HsWe:2008} for the definition of $\mathrm T^{\mathrm L}$)
\begin{alignat*}{4}
\mathrm E^{\mathrm L}(\mathbf r)& :=\frac1{4\pi(d-1)\mu }\Big( (1+A) \psi (r)\mathrm I+(1-A)\frac1{r^d}\mathbf r\otimes \mathbf r\Big),
\\
\mathrm T^{\mathrm L}(\mathbf r;\mathbf n)&:=\frac1{2\pi(d-1)}\left( \frac{A}{r^d} \big( (\mathbf r\cdot\mathbf n)\mathrm I -\mathbf r\otimes\mathbf n+\mathbf n\otimes \mathbf r\Big) +(1-A) \frac{d}{r^{d+2}} (\mathbf r\cdot\mathbf n)\mathbf r\otimes \mathbf r\right).
\end{alignat*}
Note that the definition of the Lam\'e dipoles $\mathrm T^{\mathrm L}(\punto;\mathbf n)$ can be found with the process
\begin{eqnarray*}
\mathbb R^d\ni \mathbf d,\mathbf n   \longmapsto  \mathrm T^{\mathrm L}(\punto;\mathbf n)^\top\mathbf d & =& -\boldsymbol\sigma^{\mathrm L}(\mathrm E^{\mathrm L}\mathbf d)\mathbf n= -\big( 2\mu \boldsymbol\varepsilon(\mathrm E^{\mathrm L}\mathbf d)\mathbf n+\lambda\,\mathrm{div}\,(\mathrm E^{\mathrm L}\mathbf d)\,\mathbf n\Big)\\
&= & -\frac{\mu}{A} \Big( 2\,A \, \boldsymbol\varepsilon(\mathrm E^{\mathrm L}\mathbf d)\mathbf n + (1-2\,A) \mathrm{div}\,(\mathrm E^{\mathrm L}\mathbf d)\,\mathbf n\Big),
\end{eqnarray*}
where $\mathbf d$ acts as a dipole direction and $\mathbf n$ as the stress direction.

\begin{proposition}\label{prop:8.2}
For all $(\boldsymbol\lambda,\boldsymbol\varphi)\in \mathbf H^{-1/2}(\Gamma)\times \mathbf H^{1/2}(\Gamma)$, the function $\mathbf u:=\mathrm S^{\mathrm L}\boldsymbol\lambda-\mathrm D^{\mathrm L}\boldsymbol\varphi$ is a solution of
\begin{subequations}
\begin{alignat}{4}
 -\mathrm{div} \,\boldsymbol\sigma^{\mathrm L}(\mathbf u) =  \mathbf 0 & \qquad & \mbox{in $\mathbb R^d\setminus\Gamma$},\\
 \jump{\gamma  \mathbf u}=\boldsymbol\varphi, & & \\
 \jump{\boldsymbol\sigma^{\mathrm L}(\mathbf  u)\mathbf n}= \boldsymbol\lambda. & &
\end{alignat}
\end{subequations}
Moreover, the following mappings are continuous:
\begin{equation}
\mathrm D^{\mathrm L}:\mathbf H^{1/2}(\Gamma) \to \mathbf W(\mathbb R^d\setminus\Gamma), \qquad \mathrm S^{\mathrm L}:\left\{\begin{array}{ll} \mathbf H^{-1/2}(\Gamma) \to \mathbf W(\mathbb R^3), & d=3,\\
\mathbf H^{-1/2}_0(\Gamma) \to \mathbf W(\mathbb R^2), &d=2,\\
\mathbf H^{-1/2}(\Gamma) \to \mathbf H^1_{\mathrm{loc}}(\mathbb R^2), &d=2.
\end{array}\right.
\end{equation}
\end{proposition}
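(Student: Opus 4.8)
The plan mirrors the proof of Proposition~\ref{prop:8.1}. The transmission-problem part of the statement I would read off directly from the integral theory of the Lam\'e (Navier) system, a strongly elliptic second order system covered by \cite[Chapter~8 \& Chapter~10]{McLean:2000}: the matrix $\mathrm E^{\mathrm L}$ is the Kelvin--Somigliana fundamental solution, rewritten through $A=\mu/(\lambda+2\mu)$, and, as the displayed computation preceding the statement shows, $\mathrm T^{\mathrm L}(\punto;\mathbf n)$ has been chosen so that the conormal derivative naturally attached to the single and double Lam\'e layers is exactly the traction $\boldsymbol\sigma^{\mathrm L}(\punto)\,\mathbf n$ of \eqref{stress:4}--\eqref{stress:5}. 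Granting this identification, \cite{McLean:2000} gives that $\mathbf u=\mathrm S^{\mathrm L}\boldsymbol\lambda-\mathrm D^{\mathrm L}\boldsymbol\varphi$ solves $-\mathrm{div}\,\boldsymbol\sigma^{\mathrm L}(\mathbf u)=\mathbf 0$ in $\mathbb R^d\setminus\Gamma$, with $\jump{\gamma\mathbf u}=\boldsymbol\varphi$ and $\jump{\boldsymbol\sigma^{\mathrm L}(\mathbf u)\mathbf n}=\boldsymbol\lambda$; the only point to watch is the orientation of the jump and the normalization of the conormal derivative (there is a one–parameter family of admissible Green identities for elasticity and one must check that the one underlying \cite{McLean:2000} is the traction), which is routine bookkeeping.

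For the mapping properties, away from $\Gamma$ both potentials solve a constant–coefficient elliptic system and are therefore smooth, while the $H^1$ behaviour on a bounded neighbourhood of $\Gamma$ (single layer continuous across $\Gamma$, double layer only $H^1$ on each side) is again part of \cite{McLean:2000}. Hence the genuinely new point, absent from that reference, is decay at infinity, which I would settle by inspecting the kernels directly rather than by quoting a weighted–space theorem. Since $\Gamma$ is compact, for $|\mathbf x|$ large the potentials and their gradients inherit the decay of $\mathrm E^{\mathrm L},\mathrm T^{\mathrm L}$ and of their $\mathbf r$–derivatives. One has $|\mathrm T^{\mathrm L}(\mathbf r;\mathbf n)|=O(r^{-(d-1)})$ and $|\mathrm D_{\mathbf r}\mathrm T^{\mathrm L}(\mathbf r;\mathbf n)|=O(r^{-d})$ in both dimensions, so a direct check against the weight $\rho_d$ — the logarithmic factor in $\rho_2$ being exactly what makes the $d=2$ estimate converge — yields $\mathrm D^{\mathrm L}\boldsymbol\varphi\in\mathbf W(\mathbb R^d\setminus\Gamma)$. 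Similarly, for $d=3$ one has $|\mathrm E^{\mathrm L}(\mathbf r)|=O(r^{-1})$ and $|\mathrm D_{\mathbf r}\mathrm E^{\mathrm L}(\mathbf r)|=O(r^{-2})$, whence $\mathrm S^{\mathrm L}\boldsymbol\lambda\in\mathbf W(\mathbb R^3)$.

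The delicate case is the single layer for $d=2$, where $\mathrm E^{\mathrm L}$ carries the non–decaying term $(1+A)\log r^{-1}\mathrm I$ and, moreover, its rank–one part is only $O(1)$. Expanding $\mathrm E^{\mathrm L}(\mathbf x-\mathbf y)$ as $|\mathbf x|\to\infty$ with $\mathbf y\in\Gamma$ bounded, the non–decaying contribution to $(\mathrm S^{\mathrm L}\boldsymbol\lambda)(\mathbf x)$ is $\mathrm E^{\mathrm L}(\mathbf x)$ tested against $\boldsymbol\lambda$ together with the limiting value $\widehat{\mathbf x}\otimes\widehat{\mathbf x}$ of the rank–one part tested against $\boldsymbol\lambda$; using $(\mathbf a\otimes\mathbf b)\mathbf c=(\mathbf b\cdot\mathbf c)\mathbf a$ both reduce to fixed linear combinations of the moments $\langle\boldsymbol\lambda,\mathbf e_j\rangle_\Gamma$, $j=1,2$. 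When $\boldsymbol\lambda\in\mathbf H^{-1/2}_0(\Gamma)=\boldsymbol P_0(\Gamma)^\circ$ these moments vanish, the leading term disappears, and the remainder decays like $r^{-1}$ with gradient $O(r^{-2})$, so weighting by $\rho_2$ one checks membership in $\mathbf W(\mathbb R^2)$ exactly as for the double layer. Without that constraint $\mathrm S^{\mathrm L}\boldsymbol\lambda$ grows at most logarithmically and is smooth off $\Gamma$, hence trivially lies in $\mathbf H^1_{\mathrm{loc}}(\mathbb R^2)$. I expect this cancellation of the logarithmic part under the zero–moment condition, together with the sign/normalization bookkeeping around McLean's conormal derivative, to be where essentially all the care goes; the rest is transcription from \cite{McLean:2000} plus elementary integral estimates. (One could instead reach the same conclusions through the Hanouzet-type results behind Proposition~\ref{prop:2.1}, but the direct kernel estimate seems shorter.)
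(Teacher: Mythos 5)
The paper offers no proof of this proposition at all: it is stated as review material, with the transmission problem and jump relations delegated to the integral theory of \cite{Costabel:1988} and \cite[Chapters 8 \& 10]{McLean:2000} (where, once the elasticity bilinear form $2\mu(\boldsymbol\varepsilon(\cdot),\boldsymbol\varepsilon(\cdot))+\lambda(\mathrm{div}\,\cdot,\mathrm{div}\,\cdot)$ is chosen, the conormal derivative is indeed the traction $\boldsymbol\sigma^{\mathrm L}(\cdot)\mathbf n$, which is the bookkeeping point you flag), and the weighted-space mapping properties to the variational treatment \`a la N\'ed\'elec \cite{Nedelec:1971}. Your reconstruction is therefore a legitimate proof along exactly the lines the paper intends but does not write out: McLean for the PDE, local $H^1$ behaviour and jumps, and direct kernel estimates for the behaviour at infinity, with the essential step being the cancellation of the non-decaying part of $\mathrm E^{\mathrm L}$ against the moments $\langle\boldsymbol\lambda,\mathbf e_j\rangle_\Gamma$ when $\boldsymbol\lambda\in\mathbf H^{-1/2}_0(\Gamma)$, which is precisely why the two-dimensional single layer only maps the full space $\mathbf H^{-1/2}(\Gamma)$ into $\mathbf H^1_{\mathrm{loc}}(\mathbb R^2)$. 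Two small corrections to your write-up, neither of which is a real gap: (i) the parenthetical claim that the logarithmic factor in $\rho_2$ is ``exactly what makes the $d=2$ estimate converge'' for the double layer is inaccurate --- since $\mathrm T^{\mathrm L}=O(r^{-1})$ and its gradient is $O(r^{-2})$, the weighted integrals already converge with the plain weight $(1+|\mathbf x|^2)^{-1/2}$; the logarithm in $\rho_2$ matters only for the borderline $O(1)$ and $O(\log r)$ behaviours (it is what puts constants into $W(\mathbb R^2)$, and even so it does not absorb the logarithmic growth of the unconstrained single layer, consistently with the $\mathbf H^1_{\mathrm{loc}}$ statement); (ii) since $\boldsymbol\lambda$ is only a distribution, the expansion argument needs the remainder $\mathrm E^{\mathrm L}(\mathbf x-\cdot)-\mathrm E^{\mathrm L}(\mathbf x)$ to be $O(|\mathbf x|^{-1})$ in the $\mathbf H^{1/2}(\Gamma)$-norm in the $\mathbf y$ variable (and similarly for the $\mathbf x$-gradient), not merely pointwise; this follows from the same Taylor-type estimates applied to tangential derivatives, but it should be stated explicitly.
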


Note how, if we write $\boldsymbol\sigma^\Delta(\mathbf u):=\mathrm D \mathbf u$, Propositions \ref{prop:8.1} and \ref{prop:8.2} are almost identical twins.

\section{Stokes from Laplace and Lam\'e}

To close this exposition of the Stokes layer potentials, we show that their integral expressions give the same functions as those defined in Sections \ref{sec:5.1}, \ref{sec:6.1} and \ref{sec:2d}. We will avoid having a redundant name for two entities that end up being the same, by ignoring that we ever defined the variational potentials in the previous sections. Consider first the pressure part of the potentials (as usual, $\mathbf x \in \mathbb R^d\setminus\Gamma$)
\begin{equation}\label{eq:11.20}
(\mathrm S_p \boldsymbol\lambda)(\mathbf x):=\langle \boldsymbol\lambda,\mathbf e_p(\mathbf x-\cdot)\rangle_\Gamma, \qquad 
(\mathrm D_p \boldsymbol\varphi)(\mathbf x):=\int_\Gamma \mathbf t_p(\mathbf x-\mathbf y;\mathbf n(\mathbf y))\cdot\boldsymbol\varphi(\mathbf y)\mathrm d\Gamma(\mathbf y),
\end{equation}
where
\begin{eqnarray*}
\mathbf e_p(\mathbf r)&:=& \frac1{2\pi (d-1)\,r^d}\mathbf r=-\frac1{2\pi (d-1)} \nabla \psi(r),\\
\mathbf t_p(\mathbf r;\mathbf n)&:=& \frac1{2\pi (d-1)}{2\nu}\left(\frac{d}{r^{d+2}}\mathbf r\otimes \mathbf r-\frac1{r^2}\mathrm I\right)\mathbf n.
\end{eqnarray*}
It is easy to see that both potentials define functions with $\mathcal C^\infty(\mathbb R^d\setminus\Gamma)$ components.

\begin{proposition}\label{prop:11.1}
The following identities hold in $\mathbb R^d\setminus\Gamma$ for arbitrary $\boldsymbol\lambda\in \mathbf H^{-1/2}(\Gamma)$ and $\boldsymbol\varphi\in \mathbf H^{1/2}(\Gamma)$:
\begin{equation}\label{eq:11.1}
\mathrm S_p \boldsymbol\lambda=-\mathrm{div}\,\mathrm S^\Delta \boldsymbol\lambda, \qquad \mathrm D_p\boldsymbol\varphi=-(2\nu) \mathrm{div}\,\mathrm D^\Delta \boldsymbol\varphi.
\end{equation}
Therefore, the following maps are continuous:
\begin{equation}\label{eq:11.2}
\mathrm D_p:\mathbf H^{1/2}(\Gamma) \to   L^2(\mathbb R^d), \qquad \mathrm S_p:\left\{\begin{array}{ll} \mathbf H^{-1/2}(\Gamma) \to   L^2(\mathbb R^3), & d=3,\\
\mathbf H^{-1/2}_0(\Gamma) \to   L^2(\mathbb R^2), &d=2,\\
\mathbf H^{-1/2}(\Gamma) \to   L^2_{\mathrm{loc}}(\mathbb R^2), &d=2.
\end{array}\right.
\end{equation}
\end{proposition}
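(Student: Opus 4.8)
The plan is to establish the two pointwise identities \eqref{eq:11.1} on $\mathbb R^d\setminus\Gamma$ by direct differentiation of the Laplace kernels, and then to read off the continuity statements \eqref{eq:11.2} from the mapping properties of $\mathrm S^\Delta$ and $\mathrm D^\Delta$ already recorded in Proposition \ref{prop:8.1}, together with the boundedness of the divergence operator. Since every function appearing in \eqref{eq:11.1} has $\mathcal C^\infty$ components on $\mathbb R^d\setminus\Gamma$ (the kernels $\mathrm E^\Delta$, $\mathrm T^\Delta$, $\mathbf e_p$, $\mathbf t_p$ are smooth off the diagonal), it suffices to verify each equality at a fixed $\mathbf x\notin\Gamma$.

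For the single layer I would use that $\mathrm E^\Delta$ is a scalar multiple of the identity, so $(\mathrm S^\Delta\boldsymbol\lambda)_i(\mathbf x)=\tfrac{1}{2\pi(d-1)}\langle\lambda_i,\psi(|\mathbf x-\cdot|)\rangle_\Gamma$. On any open set whose closure is disjoint from $\Gamma$, the map $\mathbf x\mapsto\psi(|\mathbf x-\cdot|)$ is $\mathcal C^\infty$ into $H^{1/2}(\Gamma)$ (the $\mathbf x$-derivatives exist and converge in the $H^{1/2}(\Gamma)$ norm, because on $\Gamma$ everything depends smoothly and uniformly on $\mathbf x$), so differentiation may be moved inside the bracket. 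Summing the diagonal terms and invoking $\nabla_{\mathbf x}\psi(|\mathbf x-\mathbf y|)=-(\mathbf x-\mathbf y)/|\mathbf x-\mathbf y|^d$ together with the relation $\mathbf e_p(\mathbf r)=-\tfrac{1}{2\pi(d-1)}\nabla\psi(r)$ noted after \eqref{eq:11.20}, one gets $\mathrm{div}\,\mathrm S^\Delta\boldsymbol\lambda=\tfrac{1}{2\pi(d-1)}\langle\boldsymbol\lambda,(\nabla\psi)(\mathbf x-\cdot)\rangle_\Gamma=-\langle\boldsymbol\lambda,\mathbf e_p(\mathbf x-\cdot)\rangle_\Gamma=-\mathrm S_p\boldsymbol\lambda$.

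For the double layer, $\mathrm T^\Delta$ is again a scalar multiple of the identity, so each component $(\mathrm D^\Delta\boldsymbol\varphi)_i$ is the scalar Laplace double layer of $\varphi_i$, an absolutely convergent surface integral when $\mathbf x\notin\Gamma$; on a neighbourhood of $\mathbf x$ bounded away from $\Gamma$ the kernel and its $\mathbf x$-gradient are bounded uniformly in $\mathbf y\in\Gamma$, so dominated convergence justifies differentiating under the integral. The one computation needed is $\nabla_{\mathbf x}\big[(\mathbf x-\mathbf y)\cdot\mathbf n(\mathbf y)\,|\mathbf x-\mathbf y|^{-d}\big]=|\mathbf r|^{-d}\,\mathbf n(\mathbf y)-d\,(\mathbf r\cdot\mathbf n(\mathbf y))\,|\mathbf r|^{-d-2}\,\mathbf r$, with $\mathbf r:=\mathbf x-\mathbf y$; contracting this against $\boldsymbol\varphi(\mathbf y)$ and multiplying by $-(2\nu)/(2\pi(d-1))$ reproduces $\mathbf t_p(\mathbf x-\mathbf y;\mathbf n(\mathbf y))\cdot\boldsymbol\varphi(\mathbf y)$, whence $\mathrm D_p\boldsymbol\varphi=-(2\nu)\,\mathrm{div}\,\mathrm D^\Delta\boldsymbol\varphi$.

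With \eqref{eq:11.1} in hand, \eqref{eq:11.2} is immediate: $\mathrm{div}$ is bounded from $\mathbf W(\mathcal O)$ into $L^2(\mathcal O)$ (it is dominated by $\|\nabla\punto\|_{\mathcal O}$) and, via cut-offs, from $\mathbf H^1_{\mathrm{loc}}(\mathbb R^2)$ into $L^2_{\mathrm{loc}}(\mathbb R^2)$; composing with the continuity of $\mathrm S^\Delta$ and $\mathrm D^\Delta$ from Proposition \ref{prop:8.1} transfers all the stated bounds to $\mathrm S_p$ and $\mathrm D_p$ on the corresponding domains. I do not anticipate a genuine obstacle here: the only steps needing a sentence of care are the interchange of the $\mathbf x$-derivative with the duality pairing (single layer) and with the integral (double layer), and both are routine once one observes that $\mathbf x$ stays at positive distance from $\Gamma$; the remainder is elementary calculus with $\psi$.
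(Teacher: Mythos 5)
Your proposal is correct and takes essentially the same route as the paper, which likewise establishes \eqref{eq:11.1} by a direct differentiation of the Laplace kernels and then obtains \eqref{eq:11.2} as an immediate consequence of Proposition \ref{prop:8.1}; your explicit justification of differentiating under the duality bracket and under the integral is exactly the ``simple computation'' the paper leaves implicit. One small remark: your double-layer contraction reproduces $\mathbf t_p$ with the factor $r^{-d}$ rather than the $r^{-2}$ printed in the definition following \eqref{eq:11.20}, i.e.\ you have used the intended kernel (the two expressions coincide only when $d=2$), which is the version for which the identity holds in both dimensions.
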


\begin{proof}
The first part follows by a simple computation. Mapping properties are a simple consequence of Proposition \ref{prop:8.1} and of \eqref{eq:11.1}.
\end{proof}

The velocity part of the Stokes potentials is defined by respective superposition of stokeslets and stresslets:
\begin{equation}\label{eq:11.21}
(\mathrm S_u \boldsymbol\lambda)(\mathbf x) :=\langle\mathrm E_u (\mathbf x-\cdot),\boldsymbol\lambda\rangle_\Gamma, \qquad 
(\mathrm D_u \boldsymbol\varphi)(\mathbf x):=\int_\Gamma \mathrm T_u(\mathbf x-\mathbf y;\mathbf n(\mathbf y))\,\boldsymbol\varphi(\mathbf y)\mathrm d\Gamma(\mathbf y),
\end{equation}
where
\begin{subequations}\label{eq:11.31}
\begin{alignat}{4}
\mathrm E_u(\mathbf r)& :=\frac1{4\pi(d-1)\nu }\Big(  \psi (r)\mathrm I+\frac1{r^d}\mathbf r\otimes \mathbf r\Big),
\\
\mathrm T_u(\mathbf r;\mathbf n)&:=\frac1{2\pi(d-1)}\frac{d}{r^{d+2}} \,(\mathbf r\cdot\mathbf n)\,\mathbf r\otimes \mathbf r.
\end{alignat}
\end{subequations}
For some forthcoming arguments, it will be useful to have the rotlet distributions at hand
\begin{equation}\label{eq:11.22}
(\mathrm R\boldsymbol\varphi)(\mathbf x):=\int_\Gamma \mathrm M(\mathbf x-\mathbf y;\mathbf n(\mathbf y))\,\boldsymbol\lambda(\mathbf y)\mathrm d\Gamma(\mathbf y),
\end{equation}
where
\[
\mathrm M(\mathbf r;\mathbf n):=\frac1{2\pi (d-1) r^d}\,(\mathbf r\otimes\mathbf n-\mathbf n\otimes\mathbf r).
\]
A simple computation shows that in $\mathbb R^d\setminus\Gamma$:
\begin{equation}\label{eq:11.3}
\mathrm{div}\,\mathrm R\boldsymbol\varphi=-\mathrm{div}\,\mathrm D^\Delta\boldsymbol\varphi=(2\nu)^{-1} \mathrm D_p\boldsymbol\varphi \qquad \forall \boldsymbol\varphi\in \mathbf H^{1/2}(\Gamma).
\end{equation}

\begin{proposition}\label{prop:11.2}
The following mappings are continuous:
\begin{equation}
\mathrm D_u:\mathbf H^{1/2}(\Gamma) \to \mathbf W(\mathbb R^d\setminus\Gamma), \qquad \mathrm S_u:\left\{\begin{array}{ll} \mathbf H^{-1/2}(\Gamma) \to \mathbf W(\mathbb R^3), & d=3,\\
\mathbf H^{-1/2}_0(\Gamma) \to \mathbf W(\mathbb R^2), &d=2,\\
\mathbf H^{-1/2}(\Gamma) \to \mathbf H^1_{\mathrm{loc}}(\mathbb R^2), &d=2.
\end{array}\right.
\end{equation}
Moreover
\begin{equation}\label{eq:11.5}
\jump{\gamma\mathrm D_u\boldsymbol\varphi}=-\boldsymbol\varphi \qquad \forall\boldsymbol\varphi \in \mathbf H^{1/2}(\Gamma).
\end{equation}
\end{proposition}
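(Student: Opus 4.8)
The plan is to recognize the Stokes velocity kernels $\mathrm E_u$ and $\mathrm T_u$ from \eqref{eq:11.31} as specializations of the Lam\'e kernels $\mathrm E^{\mathrm L}$ and $\mathrm T^{\mathrm L}$, and then to inherit the mapping properties and the jump relation directly from Proposition \ref{prop:8.2}. Comparing the two sets of formulas, one checks by inspection that the parameter choice $\mu=\nu$ and $A=0$ turns $\mathrm E^{\mathrm L}$ into $\mathrm E_u$ and $\mathrm T^{\mathrm L}$ into $\mathrm T_u$: in $\mathrm E^{\mathrm L}$ the factors multiplying $\psi(r)\mathrm I$ and $r^{-d}\,\mathbf r\otimes\mathbf r$ are $1+A$ and $1-A$, which coincide precisely when $A=0$; in $\mathrm T^{\mathrm L}$ every term except $(1-A)\,d\,r^{-d-2}(\mathbf r\cdot\mathbf n)\,\mathbf r\otimes\mathbf r$ carries the prefactor $A$ and hence disappears. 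Since the duality pairings and surface integrals defining $\mathrm S^{\mathrm L},\mathrm D^{\mathrm L}$ and $\mathrm S_u,\mathrm D_u$ are the same, this gives $\mathrm S_u=\mathrm S^{\mathrm L}$ and $\mathrm D_u=\mathrm D^{\mathrm L}$ for those parameters.

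Once this identification is in place, the continuity of $\mathrm D_u:\mathbf H^{1/2}(\Gamma)\to\mathbf W(\mathbb R^d\setminus\Gamma)$ and the three-case continuity of $\mathrm S_u$ are exactly what Proposition \ref{prop:8.2} records for $\mathrm D^{\mathrm L}$ and $\mathrm S^{\mathrm L}$; in two dimensions the logarithmic growth of $\psi$ is what forces the domain $\mathbf H^{-1/2}_0(\Gamma)$ for the $\mathbf W(\mathbb R^2)$-valued version, in complete analogy with the Laplace and Lam\'e cases. For \eqref{eq:11.5} I would specialize Proposition \ref{prop:8.2} to $\boldsymbol\lambda=\mathbf 0$: the associated field is then $\mathbf u=-\mathrm D^{\mathrm L}\boldsymbol\varphi=-\mathrm D_u\boldsymbol\varphi$ and it satisfies $\jump{\gamma\mathbf u}=\boldsymbol\varphi$, i.e.\ $\jump{\gamma\mathrm D_u\boldsymbol\varphi}=-\boldsymbol\varphi$.

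The one delicate point, and the main obstacle, is that $A=0$ lies on the boundary of the ellipticity range $A>0$ for which the integral theory of \cite{McLean:2000} underlying Proposition \ref{prop:8.2} is usually phrased: it is the incompressible (Stokes) limit of elasticity. I would dispose of this by noting that the assertions we actually use are statements about the kernels alone. Membership of $\mathrm S^{\mathrm L}\boldsymbol\lambda$ and $\mathrm D^{\mathrm L}\boldsymbol\varphi$ in $\mathbf W(\mathbb R^d\setminus\Gamma)$ (respectively in $\mathbf H^1_{\mathrm{loc}}(\mathbb R^2)$, or having traces with the prescribed jump) follows from the explicit homogeneity and decay of $\mathrm E^{\mathrm L}$, $\mathrm T^{\mathrm L}$ at infinity together with the standard local $\mathbf H^1$ estimates near $\Gamma$, and in all of these the constants stay bounded as $A\downarrow 0$; equivalently, $\mathrm E^{\mathrm L}$ and $\mathrm T^{\mathrm L}$ depend analytically on $A$ with values in the kernel classes handled by the theory, so the $A>0$ estimates and the $A>0$ jump relation pass to $A=0$ by continuity. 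This uniformity in $A$ is the only real work; the rest is the bookkeeping of the identification and a citation to Proposition \ref{prop:8.2}. (That the pair $(\mathrm S_u\boldsymbol\lambda-\mathrm D_u\boldsymbol\varphi,\ \mathrm S_p\boldsymbol\lambda-\mathrm D_p\boldsymbol\varphi)$ from Proposition \ref{prop:11.1} actually solves the Stokes system, and hence coincides with the variational potentials of Sections \ref{sec:5.1}, \ref{sec:6.1} and \ref{sec:2d}, is a separate matter to be settled afterwards via the pressure identities \eqref{eq:11.1}, \eqref{eq:11.3} and the jump relations; it is not needed for the present statement.)
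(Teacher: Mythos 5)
Your formal identification is right: at $\mu=\nu$, $A=0$ the kernels $\mathrm E^{\mathrm L},\mathrm T^{\mathrm L}$ literally become $\mathrm E_u,\mathrm T_u$. But the way you transfer Proposition \ref{prop:8.2} to that point is where the argument breaks. The value $A=0$ is not attained by any admissible Lam\'e parameters (it is the limit $\lambda\to\infty$), so Proposition \ref{prop:8.2} simply does not apply there, and nothing in that proposition (or in the references behind it, which treat fixed admissible parameters) asserts that the continuity constants or the jump relation are uniform as $A\downarrow 0$. You acknowledge that this uniformity is ``the only real work'', but you then dispose of it by assertion: ``the constants stay bounded'' / ``the kernels depend analytically on $A$, so the estimates pass to the limit''. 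Making that precise is circular in the decisive case of the double layer: the difference $\mathrm D^{\mathrm L}_A-\mathrm D_u=A\,(\mathrm D^\Delta-\mathrm R-\mathrm D_u)$, so to show $\mathrm D^{\mathrm L}_A\boldsymbol\varphi\to\mathrm D_u\boldsymbol\varphi$ in $\mathbf W(\mathbb R^d\setminus\Gamma)$ (which you need both for the mapping property and to pass the jump relation $\jump{\gamma\,\cdot\,}$ to the limit) you must already know that $\mathrm D^\Delta-\mathrm R-\mathrm D_u$, hence essentially $\mathrm D_u$ itself, is bounded --- the very thing being proved. The fallback claim that membership in $\mathbf W(\mathbb R^d\setminus\Gamma)$ and the jump relation ``follow from the explicit homogeneity and decay of the kernels together with standard local $\mathbf H^1$ estimates near $\Gamma$'' also understates the problem: for $\mathbf H^{1/2}$ densities on a merely Lipschitz $\Gamma$ these facts are exactly the nontrivial content of the Costabel/McLean theory, not elementary kernel estimates; re-deriving them uniformly in $A$ would amount to redoing that theory.

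The paper stays inside the admissible range and exploits the same affine dependence on $A$ without any limit: from \eqref{eq:11.6}, $\mathrm S_u$ is a fixed linear combination of $\mathrm S^{\mathrm L}$ (one admissible $A$) and $\mathrm S^\Delta$, so its mapping properties and $\jump{\gamma\mathrm S_u\boldsymbol\lambda}=\mathbf 0$ are immediate; from \eqref{eq:11.7}, $\mathrm D^{\mathrm L}=\mathrm D_u+A(\mathrm D^\Delta-\mathrm R-\mathrm D_u)$ holds for all $0<A<1$, so using two distinct admissible values of $A$ one isolates $\mathrm D_u$ (and, in passing, the rotlet $\mathrm R$), and taking $\jump{\gamma\,\cdot\,}$ of \eqref{eq:11.7} for varying $A$ yields \eqref{eq:11.5}. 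If you replace your limiting step by this two-values-of-$A$ linear-algebra step, your argument becomes the paper's; as written, the passage to $A=0$ is a genuine gap.
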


\begin{proof}
Note first that
\begin{equation}\label{eq:11.6}
\mathrm S^{\mathrm L}=\mu^{-1}A \mathrm S^\Delta +(1-A)  \nu\,\mu^{-1}\,\mathrm S_u,
\end{equation}
which transmits the mapping properties of the Laplace and Lam\'e single layer potentials (Propositions \ref{prop:8.1} and \ref{prop:8.2}) to $\mathrm S_u$. Note that this includes the fact that $\jump{\gamma \mathrm S_u\boldsymbol\lambda}=\mathbf 0$. We next notice that
\begin{equation}\label{eq:11.7}
\mathrm D^{\mathrm L}=A\,\mathrm D^\Delta - A\, \mathrm R+(1-A)\mathrm D_u=\mathrm D_u+A\,(\mathrm D^\Delta-\mathrm R-\mathrm D_u).
\end{equation}
Since $\mathrm D^{\mathrm L}$ is bounded $\mathbf H^{1/2}(\Gamma)\to \mathbf W(\mathbb R^d\setminus\Gamma)$ for all mathematically valid choices of the Lam\'e parameters (and then at least for all $0<A<1$), it is clear that $\mathrm D_u$ and $\mathrm D^\Delta-\mathrm R-\mathrm D_u$ have the same mapping property. Note that this implies that the rotlet operator is bounded between the same spaces as well. Taking the jump of the trace on both sides of \eqref{eq:11.7} and using Propositions \ref{prop:8.1} and \ref{prop:8.2} it follows that
\[
-\boldsymbol\varphi = - A\boldsymbol\varphi-A \jump{\gamma\mathrm R\boldsymbol\varphi}+(1-A) \jump{\gamma\mathrm D_u\boldsymbol\varphi},
\]
again for all $0<A<1$, and therefore \eqref{eq:11.5} holds. Note that, in passing, we have proved that $\jump{\gamma\mathrm R\boldsymbol\varphi}=\mathbf 0$, and therefore $\mathrm R:\mathbf H^{1/2}(\Gamma)\to \mathbf W(\mathbb R^d)$ is bounded.
\end{proof}

\begin{proposition}\label{prop:11.3}
Let $(\boldsymbol\lambda,\boldsymbol\varphi) \in \mathbf H^{-1/2}(\Gamma)\times \mathbf H^{1/2}(\Gamma)$ and let
\[
\mathbf u:=\mathrm S_u\boldsymbol\lambda-\mathrm D_u\boldsymbol\varphi, \qquad p:=\mathrm S_p\boldsymbol\lambda-\mathrm D_p\boldsymbol\varphi.
\]
Then
\begin{subequations}\label{eq:11.8}
\begin{alignat}{6}
\label{eq:11.8a}
(\mathbf u,p)\in \mathbf H^1_{\mathrm{loc}}(\mathbb R^d\setminus\Gamma) \times L^2_{\mathrm{loc}}(\mathbb R^d),\\
\label{eq:11.8b}
 -2\nu \mathrm{div}\,\boldsymbol\varepsilon(\mathbf u)+\nabla p = \mathbf 0 & \qquad & \mbox{in $\mathbb R^d\setminus\Gamma$},\\
\label{eq:11.8c}
 \mathrm{div}\,\mathbf u = 0 & & \mbox{in $\mathbb R^d\setminus\Gamma$},\\
\label{eq:11.8d}
 \jump{\gamma\mathbf u}=\boldsymbol\varphi, & & \\
\label{eq:11.8e}
 \jump{\mathbf t(\mathbf u,p)}= \boldsymbol\lambda. & &
\end{alignat}
Moreover, $(\mathbf u,p)\in \mathbf W(\mathbb R^d\setminus\Gamma)\times L(\mathbb R^d)$ if $d=3$, or if $\boldsymbol\lambda \in \mathbf H^{-1/2}_0(\Gamma)$ and $d=2$. Finally, when $d=2$, condition \eqref{eq:9.52} is satisfied.
\end{subequations}
\end{proposition}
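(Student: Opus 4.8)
\emph{Proof proposal.} The plan is to read everything off the algebraic identities \eqref{eq:11.1}, \eqref{eq:11.3}, \eqref{eq:11.6} and \eqref{eq:11.7} tying the Stokes kernels to the Laplace, Lam\'e and rotlet ones, together with Propositions \ref{prop:8.1}, \ref{prop:8.2}, \ref{prop:11.1} and \ref{prop:11.2}. The regularity statement \eqref{eq:11.8a}, and the refinement $(\mathbf u,p)\in\mathbf W(\mathbb R^d\setminus\Gamma)\times L^2(\mathbb R^d)$ when $d=3$, or when $d=2$ and $\boldsymbol\lambda\in\mathbf H^{-1/2}_0(\Gamma)$, are exactly the mapping properties already collected in Propositions \ref{prop:11.1} and \ref{prop:11.2}; the $\mathbf H^1_{\mathrm{loc}}$-versus-$\mathbf W$ dichotomy for $\mathrm S_u,\mathrm S_p$ in two dimensions is precisely why the zero-average hypothesis on $\boldsymbol\lambda$ enters. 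The jump relation \eqref{eq:11.8d} is immediate: $\jump{\gamma\mathrm S_u\boldsymbol\lambda}=\mathbf 0$ (recorded inside the proof of Proposition \ref{prop:11.2}, via \eqref{eq:11.6} and Propositions \ref{prop:8.1}--\ref{prop:8.2}) and $\jump{\gamma\mathrm D_u\boldsymbol\varphi}=-\boldsymbol\varphi$ by \eqref{eq:11.5}, so $\jump{\gamma\mathbf u}=\jump{\gamma\mathrm S_u\boldsymbol\lambda}-\jump{\gamma\mathrm D_u\boldsymbol\varphi}=\boldsymbol\varphi$.

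For the Stokes system I would first settle incompressibility \eqref{eq:11.8c}. The kernels $\mathrm E_u(\mathbf r)\mathbf d$ and $\mathrm T_u(\mathbf r;\mathbf n)\boldsymbol\varphi$ are positively homogeneous in $\mathbf r$, with $\mathcal C^\infty$ components off the origin, so Euler's identity for homogeneous functions gives $\mathrm{div}_{\mathbf x}(\mathrm E_u(\mathbf x-\mathbf y)\mathbf d)=0$ and $\mathrm{div}_{\mathbf x}(\mathrm T_u(\mathbf x-\mathbf y;\mathbf n)\boldsymbol\varphi)=0$ pointwise in $\mathbb R^d\setminus\Gamma$, whence $\mathrm{div}\,\mathrm S_u\boldsymbol\lambda=0$ and $\mathrm{div}\,\mathrm D_u\boldsymbol\varphi=0$ by differentiating under the integral (or the duality bracket), which is licit since $\mathbf x\notin\Gamma$. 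The same conclusion also follows from \eqref{eq:11.6}, \eqref{eq:11.7}, \eqref{eq:11.3} and the elementary identity $\mathrm{div}\,\mathrm E^{\mathrm L}=(A/\mu)\,\mathrm{div}\,\mathrm E^\Delta$. Once $\mathbf u$ is solenoidal, \eqref{eq:11.8b} amounts to $\nu\Delta\mathbf u=\nabla p$: using \eqref{eq:11.6}--\eqref{eq:11.7}, the harmonicity of the Laplace pieces, the Navier system $\mu\Delta(\cdot)+(\lambda+\mu)\nabla\mathrm{div}(\cdot)=\mathbf 0$ satisfied by the Lam\'e pieces (Proposition \ref{prop:8.2}), and $\mathrm{div}\,\mathrm S^{\mathrm L}\boldsymbol\lambda=(A/\mu)\,\mathrm{div}\,\mathrm S^\Delta\boldsymbol\lambda=-(A/\mu)\,\mathrm S_p\boldsymbol\lambda$ (together with its $\mathrm D$-analogue and \eqref{eq:11.1}), everything collapses after the bookkeeping $1-A=(\lambda+\mu)/(\lambda+2\mu)$, $A=\mu/(\lambda+2\mu)$. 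Alternatively \eqref{eq:11.8b} may simply be checked on the classical stokeslet and stresslet kernels.

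The core of the argument is the traction jump \eqref{eq:11.8e}, which is where the normal-stress traces of Review~\#1 and the results borrowed from \cite{McLean:2000} come in. By linearity I would treat $\mathrm S_u\boldsymbol\lambda-\mathrm D_u\boldsymbol\varphi$ through its two summands. Writing $\boldsymbol\sigma^{\mathrm S}(\mathbf v,q)=2\nu\boldsymbol\varepsilon(\mathbf v)-q\,\mathrm I$, using $2\mu\boldsymbol\varepsilon(\mathbf v)=\boldsymbol\sigma^{\mathrm L}(\mathbf v)-\lambda(\mathrm{div}\,\mathbf v)\mathrm I$, the identities \eqref{eq:11.1}, \eqref{eq:11.3}, \eqref{eq:11.6}, \eqref{eq:11.7}, and the solenoidality just obtained, one can rewrite, pointwise in $\mathbb R^d\setminus\Gamma$,
\[
\boldsymbol\sigma^{\mathrm S}(\mathrm S_u\boldsymbol\lambda,\mathrm S_p\boldsymbol\lambda)=\frac1{1-A}\,\boldsymbol\sigma^{\mathrm L}(\mathrm S^{\mathrm L}\boldsymbol\lambda)+\frac{A}{1-A}\big((\mathrm{div}\,\mathrm S^\Delta\boldsymbol\lambda)\,\mathrm I-2\,\boldsymbol\varepsilon(\mathrm S^\Delta\boldsymbol\lambda)\big),
\]
and an analogous identity for the double layer, with $\mathrm S^{\mathrm L}\boldsymbol\lambda,\mathrm S^\Delta\boldsymbol\lambda$ replaced by $\mathrm D^{\mathrm L}\boldsymbol\varphi,\mathrm D^\Delta\boldsymbol\varphi$ plus an extra rotlet term $\mathrm R\boldsymbol\varphi$, the Laplace--rotlet correction again premultiplied (up to a harmless constant) by $A/(1-A)$. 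The key observation is that the correction tensors are built only out of Laplace and rotlet potentials, hence are \emph{independent of the Lam\'e parameter} $A$, whereas the Stokes objects on the left do not involve $A$ at all, while Proposition \ref{prop:8.2} yields $\jump{\boldsymbol\sigma^{\mathrm L}(\mathrm S^{\mathrm L}\boldsymbol\lambda)\,\mathbf n}=\boldsymbol\lambda$ and $\jump{\boldsymbol\sigma^{\mathrm L}(\mathrm D^{\mathrm L}\boldsymbol\varphi)\,\mathbf n}=\mathbf 0$ for \emph{every} admissible choice of the parameters. Applying the (linear) normal-trace jump of Review~\#1 --- legitimate for these tensors since their divergence lies in $\mathbf L^2_{\mathrm{loc}}(\mathbb R^d\setminus\Gamma)$ --- and letting $A\to0^+$, which is the incompressible limit $\lambda\to+\infty$ in which \eqref{eq:11.6}--\eqref{eq:11.7} and Propositions \ref{prop:8.1}--\ref{prop:8.2} all remain valid, makes the correction disappear and gives $\jump{\mathbf t(\mathrm S_u\boldsymbol\lambda,\mathrm S_p\boldsymbol\lambda)}=\boldsymbol\lambda$ and $\jump{\mathbf t(\mathrm D_u\boldsymbol\varphi,\mathrm D_p\boldsymbol\varphi)}=\mathbf 0$; subtracting yields \eqref{eq:11.8e}. (For the single layer one can instead compute the correction's jump from $\jump{\mathrm D\,\mathrm S^\Delta\boldsymbol\lambda}=\boldsymbol\lambda\otimes\mathbf n$, valid because the tangential part of $\nabla\mathrm S^\Delta\boldsymbol\lambda$ does not jump when its trace does not.) I expect the real work to be assembling the tensor identity for $\boldsymbol\sigma^{\mathrm S}$ of the potentials so that the correction is \emph{manifestly} $A$-free: this forces juggling \eqref{eq:11.1}, \eqref{eq:11.3}, the divergence formulas for $\mathrm S^{\mathrm L}$ and $\mathrm D^{\mathrm L}$, and the $A$-$\lambda$-$\mu$ identities at once, and in the double-layer case keeping the rotlet contribution under control; once that identity is in hand, the appeal to Proposition \ref{prop:8.2}, linearity of the jump and the passage $A\to0^+$ are routine, provided one checks that all borrowed statements hold uniformly for $A$ near $0$.

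Finally, the two-dimensional side condition \eqref{eq:9.52} is tautological, being built into \eqref{eq:9.50}--\eqref{eq:9.51}: as $\Xi\cap\Gamma=\emptyset$ the kernels $\mathrm E_u,\mathrm T_u$ are smooth on $\Xi\times\Gamma$, so the $\Xi$-integration commutes with the $\Gamma$-duality/integration, and $\boldsymbol\jmath(\mathbf u)=\int_\Xi(\mathrm S_u\boldsymbol\lambda)-\int_\Xi(\mathrm D_u\boldsymbol\varphi)=\langle\mathrm B_{\mathrm S},\boldsymbol\lambda\rangle_\Gamma-\int_\Gamma\mathrm B_{\mathrm D}\,\boldsymbol\varphi=\boldsymbol\ell_{\mathrm S}(\boldsymbol\lambda)-\boldsymbol\ell_{\mathrm D}(\boldsymbol\varphi)$.
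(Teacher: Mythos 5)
Your proposal is correct and reaches every claim of the proposition, and for most of it you travel the paper's road: \eqref{eq:11.8a} and \eqref{eq:11.8d} from Propositions \ref{prop:11.1}--\ref{prop:11.2}, the differential identities by direct kernel computation, and the two-dimensional condition \eqref{eq:9.52} by exchanging the $\Xi$-integration with the $\Gamma$-duality, exactly as in the paper. The genuine difference is in how you get the traction jump for the single layer. Your double-layer argument (a pointwise identity expressing the Stokes stress of the potential as a multiple of $\boldsymbol\sigma^{\mathrm L}(\mathrm D^{\mathrm L}\boldsymbol\varphi)$ plus an $A$-weighted but $A$-independent Laplace/rotlet correction, then taking normal-trace jumps and exploiting that the identity holds for all $0<A<1$) is precisely the paper's computation. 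For the single layer, however, the paper instead fixes $\mu=\nu$, $\lambda=0$ (so $A=\smallfrac12$), writes $\mathrm S_u\boldsymbol\lambda=2\,\mathrm S^{\mathrm L}\boldsymbol\lambda-\nu^{-1}\mathrm S^\Delta\boldsymbol\lambda$, and computes $\langle\jump{\mathbf t(\mathbf u,p)},\gamma\mathbf v\rangle_\Gamma$ weakly, disposing of the leftover terms with the distributional identity $(\mathrm D\mathbf w,(\mathrm D\mathbf v)^\top)_{\mathbb R^d}=(\mathrm{div}\,\mathbf w,\mathrm{div}\,\mathbf v)_{\mathbb R^d}$. Your uniform treatment is valid: your identity $\boldsymbol\sigma^{\mathrm S}(\mathrm S_u\boldsymbol\lambda,\mathrm S_p\boldsymbol\lambda)=\frac1{1-A}\boldsymbol\sigma^{\mathrm L}(\mathrm S^{\mathrm L}\boldsymbol\lambda)+\frac{A}{1-A}\big((\mathrm{div}\,\mathrm S^\Delta\boldsymbol\lambda)\mathrm I-2\boldsymbol\varepsilon(\mathrm S^\Delta\boldsymbol\lambda)\big)$ checks out (using $\lambda A/\mu=1-2A$ and $\mathrm{div}\,\mathrm S^{\mathrm L}\boldsymbol\lambda=(A/\mu)\,\mathrm{div}\,\mathrm S^\Delta\boldsymbol\lambda$), the correction is divergence-free off $\Gamma$ by harmonicity of $\mathrm S^\Delta\boldsymbol\lambda$ so its normal-trace jump is defined and $A$-free, and since the left-hand side does not depend on $A$, comparing two values of $A$ (no limit $A\to0^+$, and no uniformity in $A$, is actually needed) yields $\jump{\mathbf t(\mathrm S_u\boldsymbol\lambda,\mathrm S_p\boldsymbol\lambda)}=\boldsymbol\lambda$. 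What you buy is one mechanism for both layers; what the paper buys with $A=\smallfrac12$ is less tensor algebra at the cost of the weak integration-by-parts identity.

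One justification should be repaired, even though the claim it supports is true. Homogeneity of the kernels does not imply they are divergence-free (Euler's identity only relates $\mathbf r\cdot\nabla$ to the homogeneity degree), and in two dimensions $\mathrm E_u$ contains a logarithm and is not homogeneous at all; so \eqref{eq:11.8c} must simply be verified on the kernels by the short direct computation you mention in passing --- the same ``direct (but cumbersome) computation'' the paper invokes for \eqref{eq:11.8b}--\eqref{eq:11.8c}. Note also that your fallback via \eqref{eq:11.6}, \eqref{eq:11.7} and \eqref{eq:11.3} does give $\mathrm{div}\,\mathrm S_u\boldsymbol\lambda=0$, but for $\mathrm D_u\boldsymbol\varphi$ it would require the unproven identity $\mathrm{div}\,\mathrm D^{\mathrm L}\boldsymbol\varphi=2A\,\mathrm{div}\,\mathrm D^\Delta\boldsymbol\varphi$, which itself needs a kernel computation, so it is not an independent shortcut.
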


\begin{proof}
 Propositions \ref{prop:11.1} and \ref{prop:11.2} prove \eqref{eq:11.8a} and \eqref{eq:11.8d}. The differential equations \eqref{eq:11.8b} and \eqref{eq:11.8d} can be proved to hold by a direct (but cumbersome) computation. 

Let now $\boldsymbol\lambda \in \mathbf H^{-1/2}(\Gamma)$, take $\mu=\nu$ and $\lambda=0$ in the Lam\'e equations (therefore $A=\frac12$),  define
\[
\mathbf u:=\mathrm S_u\boldsymbol\lambda, \qquad p:=\mathrm S_p\boldsymbol\lambda, \qquad \mathbf u^{\mathrm L}:=\mathrm S^{\mathrm L}\boldsymbol\lambda, \qquad \mathbf u^\Delta :=\mathrm S^\Delta \boldsymbol\lambda,
\]
and note that by \eqref{eq:11.6} and Proposition \ref{prop:11.1}
\[
\mathbf u=2\mathbf u^{\mathrm L}-\nu^{-1}\mathbf u^\Delta \quad \mbox{and}\quad p=-\mathrm{div}\,\mathbf u^\Delta.
\]
By Propositions \ref{prop:8.1} and \ref{prop:8.2} and the definitions of the normal stresses, it follows that
\[
2\nu (\boldsymbol\varepsilon(\mathbf u^{\mathrm L}),\mathrm D \mathbf v)_{\mathbb R^d} =\langle\boldsymbol\lambda,\gamma \mathbf v\rangle_\Gamma, \qquad (\mathrm D\mathbf u^\Delta,\mathrm D\mathbf v)_{\mathbb R^d}=\langle\boldsymbol\lambda,\gamma\mathbf v\rangle_\Gamma, \qquad \forall \mathbf v\in \mathbf H^1_{\mathrm{comp}}(\mathbb R^d),
\]
where the subscript 'comp' is used to denote compact support. Therefore
\begin{eqnarray*}
\langle\jump{\mathbf t(\mathbf u,p)},\gamma\mathbf v\rangle_\Gamma &=& 2\nu (\boldsymbol\varepsilon(\mathbf u),\mathrm D \mathbf v)_{\mathbb R^d}-(p,\mathrm{div}\,\mathbf v)_{\mathbb R^d}\\
&=& 4\nu (\boldsymbol\varepsilon(\mathbf u^{\mathrm L}),\mathrm D\mathbf v)_{\mathbb R^d}-2(\boldsymbol\varepsilon(\mathbf u^\Delta),\mathrm D\mathbf v)_{\mathbb R^d}+(\mathrm{div}\,\mathbf u^\Delta,\mathrm{div}\,\mathbf v)_{\mathbb R^d}\\
&=& \langle \boldsymbol\lambda,\gamma\mathbf v\rangle_\Gamma -(\mathrm D\mathbf u^\Delta,(\mathbf D\mathbf v)^\top)_{\mathbb R^d}+(\mathrm{div}\,\mathbf u^\Delta,\mathrm{div}\,\mathbf v)_{\mathbb R^d},
\end{eqnarray*} 
for all $\mathbf v\in \mathbf H^1_{\mathrm{comp}}(\mathbb R^d).$ However, it can easily be seen with a density argument and differentiation in the sense of distributions that
\[
(\mathrm D\mathbf u^\Delta,(\mathbf D\mathbf v)^\top)_{\mathbb R^d}=(\mathrm{div}\,\mathbf u^\Delta,\mathrm{div}\,\mathbf v)_{\mathbb R^d}
\qquad \forall \mathbf u\in \mathbf H^1_{\mathrm{loc}}(\mathbb R^d), \, \mathbf v\in \mathbf H^1_{\mathrm{comp}}(\mathbb R^d).
\]
This proves that
\[
\jump{\mathbf t(\mathrm S_u\boldsymbol\lambda, \mathrm S_p\boldsymbol\lambda)}=\boldsymbol\lambda \qquad \forall \boldsymbol\lambda \in \mathbf H^{-1/2}(\Gamma).
\]

We start afresh with $\boldsymbol\varphi\in \mathbf H^{1/2}(\Gamma)$, take general $A$ and $\mu$, and define
\[
\mathbf u:=\mathrm D_u\boldsymbol\varphi, \quad p:=\mathrm D_p\boldsymbol\varphi, \quad \mathbf u^{\mathrm L}:=\mathrm D^{\mathrm L}\boldsymbol\varphi, \quad \mathbf u^\Delta:=\mathrm D^\Delta \boldsymbol\varphi, \quad \mathbf u^{\mathrm R}:=\mathrm R\boldsymbol\varphi,
\]
and note that by \eqref{eq:11.3} and \eqref{eq:11.7}, it follows that
\begin{equation}\label{eq:11.9}
\mathrm{div}\,\mathbf u^{\mathrm R}-\mathrm{div}\,\mathbf u^\Delta=\nu^{-1} \,p \quad \mbox{and}\quad \mathbf u^{\mathrm L}=A\,\mathbf u^\Delta-A\,\mathbf u^{\mathrm R}+(1-A)\mathbf u.
\end{equation}
Noticing that
\[
A\,\mu^{-1} \boldsymbol\sigma^{\mathrm L}(\mathbf v)=2A \boldsymbol\varepsilon(\mathbf v)+(1-2A)(\mathrm{div}\,\mathbf v)\mathrm I, 
\]
using \eqref{eq:11.9} and the fact that $\mathrm{div}\,\mathbf u=0$, we can expand
\begin{eqnarray*}
A\mu^{-1} \boldsymbol\sigma^{\mathrm L}(\mathbf u^{\mathrm L}) &=& 2 A^2\Big(\boldsymbol\varepsilon(\mathbf u^\Delta)-\boldsymbol\varepsilon(\mathbf u^{\mathrm R})-\boldsymbol\varepsilon(\mathbf u)-(\mathrm{div}\mathbf u^\Delta)\mathrm I+(\mathrm{div}\,\mathbf u^{\mathrm R})\mathrm I+(\mathrm{div}\,\mathbf u)\mathrm I\Big)\\
& &+ A\,\Big( 2\boldsymbol\varepsilon(\mathbf u)+(\mathrm{div}\,\mathbf u^\Delta)\mathrm I-(\mathrm{div}\,\mathbf u^{\mathrm R})\mathrm I-(3\, \mathrm{div}\,\mathbf u)\mathrm I\Big)+(\mathrm{div}\,\mathbf u)\mathrm I\\
&=& 2A^2 \Big(\boldsymbol\varepsilon(\mathbf u^\Delta)-\boldsymbol\varepsilon(\mathbf u^{\mathrm R})-\boldsymbol\varepsilon(\mathbf u)+\nu^{-1} p\,\mathrm I\Big)+ A\,(2\boldsymbol\varepsilon(\mathbf u)-\nu^{-1}\,p\,\mathrm I).
\end{eqnarray*}
Taking the jump of the normal components on both sides of the previous formula, it follows that
\[
\mathbf 0=\mu^{-1} \jump{\boldsymbol\sigma^{\mathrm L}(\mathbf u^{\mathrm L})}= 2 A
\jump{(\boldsymbol\varepsilon(\mathbf u^\Delta-\mathbf u^{\mathrm R}-\mathbf u)+ \nu^{-1} p\,\mathrm I)\mathbf n}+\nu^{-1} \jump{\mathbf t(\mathbf u,p)}.
\]
Since this results holds for all $0<A<1$, then $\jump{\mathbf t(\mathbf u,p)}=\mathbf 0$.

If we now integrate $\mathbf u$ on the curve/surface $\Xi=\partial B(\mathbf 0;R)$ and note that --with the notation of \eqref{eq:9.50} and \eqref{eq:9.51}--
\begin{eqnarray*}
\int_\Xi \mathbf u(\mathbf x)\mathrm d\Xi(\mathbf x) &=& \int_\Xi\langle \mathrm E_u(\mathbf x-\punto),\boldsymbol\lambda\rangle_\Gamma\mathrm d\Xi(\mathbf x) -\int_\Xi \Big( \int_\Gamma \mathrm T_u(\mathbf x-\mathbf y;\mathbf n(\mathbf y))\boldsymbol\varphi(\mathbf y)\mathrm d\Gamma(\mathbf y)\Big) \mathrm d\Xi(\mathbf x)\\
& = & \left\langle \int_\Xi \mathrm E_u(\mathbf x-\punto)\mathrm d\Xi(\mathbf x), \boldsymbol\lambda\right\rangle_{\!\Gamma}\!\!\!-\!\!
\int_\Gamma \!\Big( \int_\Xi \mathrm T_u(\mathbf x-\mathbf y;\mathbf n(\mathbf y))\mathrm d\Xi(\mathbf x)\Big) \boldsymbol\varphi(\mathbf y)\mathrm d\Gamma(\mathbf y)\\
&=& \langle \mathrm B_{\mathrm S},\boldsymbol\lambda\rangle_\Gamma-\int_\Gamma \mathrm B_{\mathrm D}(\mathbf y)\boldsymbol\varphi(\mathbf y)\mathrm d\Gamma(\mathbf y)=\boldsymbol\ell_{\mathrm S}(\boldsymbol\lambda)-\boldsymbol\ell_{\mathrm D}(\boldsymbol\varphi),
\end{eqnarray*}
then we get condition \eqref{eq:9.52} in the two dimensional case.
\end{proof}

\paragraph{In conclusion.} Proposition \ref{prop:11.3} shows that the potentials defined with integral formulas \eqref{eq:11.20} and \eqref{eq:11.21} are the same as the potentials defined through variational problems (Sections \ref{sec:5.1}, \ref{sec:6.1} and \ref{sec:2d}). Furthermore, formulas \eqref{eq:11.1},  \eqref{eq:11.6}, and \eqref{eq:11.7} give simple expressions of the Stokes potentials in terms of Lam\'e and Laplace potentials, with the fleeting presence of the rotlet \eqref{eq:11.22}. Our story ends here, but this is far from all that can be done with the theory of these operators. The formulas \eqref{eq:11.6} and \eqref{eq:11.7} (and the parametric analysis in terms of the non-physical quantity $0$) give a fast track transfer of many results from the better understood theory of Lam\'e and Laplace layer potentials to the Stokes world.

\bibliographystyle{abbrv}
\bibliography{referencesStokesBEM}

\end{document}